\documentclass[11pt,a4paper]{article}

\usepackage{epsf,epsfig,amsfonts,amsgen,amsmath,amstext,amsbsy,amsopn,amsthm}
\usepackage{amsmath}
\usepackage{amsfonts,amsthm,amssymb,bm}
\usepackage{amsfonts}
\usepackage{graphics}
\usepackage{latexsym,bm}
\usepackage{amsfonts,amsthm,amssymb,bbding}
\usepackage{indentfirst}
\usepackage{graphicx}
\usepackage{color}

\usepackage[colorlinks=true,anchorcolor=blue,filecolor=blue,linkcolor=blue,urlcolor=blue,citecolor=blue]{hyperref}
\usepackage{float}
\usepackage{tikz,enumerate}
\usepackage[margin=2.5cm]{geometry}

\newtheorem{thm}{Theorem}[section]

\newtheorem{lem}[thm]{Lemma}

\newtheorem{claim}{Claim}[section]

\addtocounter{section}{0}

\begin{document}
\title{Spectral extremal results for triangle-free graphs with chromatic number at least four\footnote{Supported by the National Natural Science Foundation of China (Nos.\,12271162,\,12326372), and Natural Science Foundation of Shanghai (Nos. 22ZR1416300 and 23JC1401500) and The Program for Professor of Special Appointment (Eastern Scholar) at Shanghai Institutions of Higher Learning (No.\,TP2022031).}}
\author{{\bf Yinfen Zhu$^{a,b}$},
{\bf Huiqiu Lin$^{a}$}\thanks{Corresponding author: huiqiulin@126.com(H. Lin)} \\
\small $^{a}$ School of Mathematics, East China University of Science and Technology, \\
\small  Shanghai 200237, China\\
\small $^{b}$ School of Mathematics and Physics, Xinjiang Institute of Engineering, \\
\small   Urumqi, Xinjiang 830023, China\\
}

\date{\today}
\maketitle

{\flushleft\large\bf Abstract}
A graph is called $F$-free if it does not contain a copy of $F$.
Let $G(r,s)$ denote a $K_{r+1}$-free graph of order $n$ with chromatic number at least $s$
that maximizes the spectral radius.
Nikiforov [Linear Algebra Appl., 2007] proved the spectral Tur\'{a}n theorem,
which implies that $G(r,s)$ is the $r$-partite Tur\'{a}n graph $T_{n,r}$ for $s\leq r$.
Lin, Ning, and Wu [Combin. Probab. Comput., 2021] characterized the unique spectral extremal graph $G(2,3)$.
This result was later extended by Li and Peng [SIAM J. Discrete Math., 2023]  to all $s=r+1\geq 3$.
In this paper, we push the characterization further by determining the unique extremal graph $G(2,4)$ for all sufficiently large $n$. Specifically, we show that $G(2,4)$ is precisely a blow-up of the Gr\"{o}tzsch graph. Interestingly, under the same conditions, $G(2,4)$ also coincides with the unique edge-extremal graph identified by Ren, Wang, Wang, and Yang [arXiv:2404.07486v2].

\begin{flushleft}
\textbf{Keywords:} spectral radius; triangle; chromatic number; the Gr\"{o}tzsch graph
\end{flushleft}
\textbf{AMS Classification:} 05C35; 05C50

\section{Introduction}

Given a graph $H$, a graph is called \textit{$H$-free}
if it does not contain $H$ as a subgraph.
The \emph{Tur\'{a}n number} of $H$, denoted by $\mathrm{ex}(n,H)$,
is the maximum number of edges in an $n$-vertex $H$-free graph.
As one of the earliest results in extremal graph theory,
Tur\'{a}n's theorem \cite{Turan1941} states that
$T_{n,r}$ is the unique graph that attains the maximum number of edges over all $n$-vertex $K_{r+1}$-free graphs.
Here the Tur\'{a}n graph $T_{n,r}$ denotes the complete $n$-vertex $r$-partite graph with part sizes as
equal as possible.
Let $f(F,s)$ denote the maximum number of edges over all
$n$-vertex $F$-free graphs with chromatic number at least $s$.
Since the Tur\'{a}n graph $T_{n,r}$ is $K_{r+1}$-free and satisfies $\chi(T_{n,r})=4$,
Tur\'{a}n's theorem directly implies that $f(K_{r+1},s)=e(T_{n,r})$ for all $s\leq r$.
For the case $s=r+1=3$,
Erd\H{o}s showed that $f(K_3,3)=\lfloor\frac{(n-1)^2}{4}\rfloor+1$
(see \cite[p. 306]{Bondy2008}).
This was later generalized by Brouwer \cite{Brouwer1981},
who proved that  $f(K_{r+1},r+1)=e(T_{n,r})-\lfloor\frac{n}{r}\rfloor+1$ for $n\geq 2r+1$.
Beyond the case $s=r+1$, the behavior of $f(K_{r+1},s)$ for $s\geq r+2$ remains a challenging problem.
In particular, for $r=2$ and $s=4$, Ren, Wang, Wang, and Yang \cite{Ren2025+} determined $f(K_3,4)$
and showed that the extremal graphs are derived from the Gr\"{o}tzsch graph $F_1$ (see Figure \ref{fig-1.1A}).
As the smallest triangle-free 4-chromatic graph (see \cite{Chvatal1974}),
$F_1$ serves as the base for the extremal construction $F_1(n)$.
Here, the graph $F_1(n)$ is a graph obtained from $F_1$ by replacing $v_{13}$, $v_1$, $v_{23}$ and $y$ with independent sets $V_1$, $V_2$, $V_3$ and $W$, respectively, such that
$|V_1|+|V_2|+|V_3|=\left\lfloor\frac{n-7}{2}\right\rfloor$ (or $=\left\lceil\frac{n-7}{2}\right\rceil$),
$|W|=\left\lceil\frac{n-7}{2}\right\rceil$ (or $=\left\lfloor\frac{n-7}{2}\right\rfloor$),
and two vertices are adjacent if and only if their original vertices in $F_1$ are adjacent.

\begin{figure}[!h]
\centering
% 第一个子图：占页面45%宽度，左对齐
\begin{minipage}{0.45\textwidth}
\centering
\begin{tikzpicture}[scale=0.45, x=1.00mm, y=1.00mm, inner xsep=0pt, inner ysep=0pt, outer xsep=0pt, outer ysep=0pt]
\definecolor{L}{rgb}{0,0,0}
\definecolor{F}{rgb}{0,0,0}

\node[circle,fill=cyan,draw=cyan,inner sep=0pt,minimum size=2mm,label={[label distance=1mm]90:$v_{13}$}] (u1) at (0,30) {};
\node[circle,fill=cyan,draw=cyan,inner sep=0pt,minimum size=2mm,label={[label distance=1mm]90:$v_{23}$}] (u2) at (20,30) {};
\node[circle,fill=F,draw=F,inner sep=0pt,minimum size=2mm,label={[label distance=1mm]90:$v_1$}]
(u3) at (40,30) {};
\node[circle,fill=red,draw=red,inner sep=0pt,minimum size=2mm,label={[label distance=1mm]90:$v_2$}] (u4) at (60,30) {};
\node[circle,fill=red,draw=red,inner sep=0pt,minimum size=2mm,label={[label distance=1mm]90:$x$}]
(u5) at (80,30) {};

\node[circle,fill=red,draw=red,inner sep=0pt,minimum size=2mm,label={[label distance=1mm]180:$u_1$}] (u6) at (-30,0) {};
\node[circle,fill=blue,draw=blue,inner sep=0pt,minimum size=2mm,label={[label distance=1mm]357:$u_2$}] (u7) at (-10,0) {};
\node[circle,fill=red,draw=red,inner sep=0pt,minimum size=2mm,label={[label distance=1mm]0:$u_{3}$}] (u11) at (10,0) {};
\node[circle,fill=cyan,draw=cyan,inner sep=0pt,minimum size=2mm,label={[label distance=1mm]270:$w_{13}$}] (u8) at (0,-30) {};
\node[circle,fill=cyan,draw=cyan,inner sep=0pt,minimum size=2mm,label={[label distance=1mm]270:$w_{23}$}] (u9) at (20,-30) {};
\node[circle,fill=blue,draw=blue,inner sep=0pt,minimum size=2mm,label={[label distance=1mm]270:$y$}] (u10) at (80,-30) {};

\path[line width=0.3mm, draw=L] (u1) -- (u6);
\path[line width=0.3mm, draw=L] (u3) -- (u6);
\path[line width=0.3mm, draw=L] (u2) -- (u7);
\path[line width=0.3mm, draw=L] (u4) -- (u7);
\path[line width=0.3mm, draw=L] (u6) -- (u7);
\path[line width=0.3mm, draw=L] (u6) -- (u8);
\path[line width=0.3mm, draw=L] (u7) -- (u9);

\path[line width=0.3mm, draw=L] (u1) -- (u10);
\path[line width=0.3mm, draw=L] (u2) -- (u10);
\path[line width=0.3mm, draw=L] (u3) -- (u10);
\path[line width=0.3mm, draw=L] (u4) -- (u10);
\path[line width=0.3mm, draw=L] (u5) -- (u10);

\path[line width=0.3mm, draw=L] (u5) -- (u8);
\path[line width=0.3mm, draw=L] (u5) -- (u9);

\path[line width=0.3mm, draw=L] (u1) -- (u11);
\path[line width=0.3mm, draw=L] (u2) -- (u11);
\path[line width=0.3mm, draw=L] (u8) -- (u11);
\path[line width=0.3mm, draw=L] (u9) -- (u11);

\path[line width=0.3mm, draw=L] (u4) -- (u8);
\path[line width=0.3mm, draw=L] (u3) -- (u9);

%\draw(30,-40) node[anchor=base west]{\fontsize{14.23}{17.07}\selectfont $F_1$};

\end{tikzpicture}
\end{minipage}
\hspace{2mm}
% 第二个子图：占页面45%宽度，左对齐
\begin{minipage}{0.45\textwidth}
\centering
\begin{tikzpicture}[scale=0.45, x=1.00mm, y=1.00mm, inner xsep=0pt, inner ysep=0pt, outer xsep=0pt, outer ysep=0pt]
\definecolor{L}{rgb}{0,0,0}
\definecolor{F}{rgb}{0,0,0}

\node[circle,fill=blue,draw=blue,inner sep=0pt,minimum size=2mm,label={[label distance=1mm]270:$u_2$}] (rect1) at (-140,-35) {}; % 绘制一个长方形
\node[circle,fill=cyan,draw=cyan,inner sep=0pt,minimum size=2mm,label={[label distance=1mm]270:$w_{23}$}] (rect2) at (-100,-35) {}; % 绘制一个长方形

\node[circle,fill=blue,draw=blue,inner sep=0pt,minimum size=2mm,
label={[label distance=1mm]270:$y$}] (x) at (-120,0) {};

\node[circle,fill=cyan,draw=cyan,inner sep=0pt,minimum size=2mm,
label={[label distance=1mm]90:$v_{13}$}] (y) at (-120,20) {};

\node[circle,fill=cyan,draw=cyan,inner sep=0pt,minimum size=2mm,
label={[label distance=1mm]90:$w_{13}$}] (z) at (-120,40) {};

\node[circle,fill=red,draw=red,inner sep=0pt,minimum size=2mm,
label={[label distance=1mm]0:$x$}] (A1) at (-101,6.2) {};

\node[circle,fill=red,draw=red,inner sep=0pt,minimum size=2mm,
label={[label distance=1mm]180:$v_2$}] (B1) at (-139,6.2) {};

\node[circle,fill=cyan,draw=cyan,inner sep=0pt,minimum size=2mm,
label={[label distance=1mm]270:$v_{23}$}](Bx) at (-108.2,-15.6) {};

\node[circle,fill=L,draw=L,inner sep=0pt,minimum size=2mm,
label={[label distance=1mm]270:$v_1$}] (Ax) at (-131.8,-15.6) {};

\node[circle,fill=red,draw=red,inner sep=0pt,minimum size=2mm,
label={[label distance=1mm]0:$u_3$}] (A2) at (-80,8) {};

\node[circle,fill=red,draw=red,inner sep=0pt,minimum size=2mm,
label={[label distance=1mm]180:$u_1$}] (B2) at (-160,8) {};
%\draw(-28,7) node[anchor=base west]{\fontsize{10.23}{17.07}\selectfont $B_2$};

\definecolor{L}{rgb}{0,0,0}
\path[line width=0.3mm, draw=L] (x) -- (y);
\path[line width=0.3mm, draw=L] (x) -- (A1);
\path[line width=0.3mm, draw=L] (x) -- (B1);
\path[line width=0.3mm, draw=L] (x) -- (Ax);
\path[line width=0.3mm, draw=L] (x) -- (Bx);

\path[line width=0.3mm, draw=L] (y) -- (A2);
\path[line width=0.3mm, draw=L] (y) -- (B2);

\path[line width=0.3mm, draw=L] (z) -- (A2);
\path[line width=0.3mm, draw=L] (z) -- (B2);

\path[line width=0.3mm, draw=L] (B2) -- (Ax);
\path[line width=0.3mm, draw=L] (B2) -- (rect1);

\path[line width=0.3mm, draw=L] (A2) -- (Bx);
\path[line width=0.3mm, draw=L] (A2) -- (rect2);

\path[line width=0.3mm, draw=L] (Ax) -- (rect2);
\path[line width=0.3mm, draw=L] (Bx) -- (rect1);

\path[line width=0.3mm, draw=L] (rect1) -- (rect2);

\path[line width=0.3mm, draw=L] (A1) -- (rect2);
\path[line width=0.3mm, draw=L] (A1) -- (z);

\path[line width=0.3mm, draw=L] (B1) -- (rect1);
\path[line width=0.3mm, draw=L] (B1) -- (z);
\end{tikzpicture}
\end{minipage}

\caption{The graph $F_1$.}{\label{fig-1.1A}}

\end{figure}

\begin{thm} \emph{(\cite{Ren2025+})} \label{Theorem1.1}
Let $G$ be a graph on $n$ vertices with $n \geq 150$. If $G$ is triangle-free and $\chi(G) \geq 4$, then
    $$e(G) \leq \big\lfloor \frac{(n-3)^2}{4}\big\rfloor + 5,$$
with equality if and only if $G = F_1(n)$ up to isomorphism.
\end{thm}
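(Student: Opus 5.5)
We prove the edge bound by a stability argument followed by a local analysis around a $4$-chromatic triangle-free "core". Throughout, let $G$ be triangle-free with $\chi(G)\ge 4$ and $e(G)\ge \lfloor (n-3)^2/4\rfloor+5$; we aim to show that $G=F_1(n)$.

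\textbf{Step 1: reduction to minimum degree.} First we pass to a subgraph with large minimum degree. Repeatedly delete a vertex of degree less than roughly $n/2-c$ for a suitable constant $c$. Each deletion loses fewer edges than the difference $\lfloor (n-3)^2/4\rfloor-\lfloor (n-4)^2/4\rfloor\approx n/2$, so the excess $e(G)-\lfloor (m-3)^2/4\rfloor$ grows along the process. If many vertices were deleted, the surviving graph on $m$ vertices would have more than $m^2/4$ edges, contradicting Mantel's theorem. So only a bounded number of deletions occur. The catch is that a deletion may lower the chromatic number. To handle this we argue by induction on $n$, with a base case near $n\ge150$ covered by direct counting, or else we argue directly that a low-degree vertex $v$ can be re-inserted. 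Concretely, if $G-v$ is $3$-colourable then $G$ is already nearly extremal. The clean version: take a vertex-minimal $4$-chromatic subgraph structure, or use induction on $n$ together with the observation that $F_1(n)$ has minimum degree about $n/2-O(1)$.

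\textbf{Step 2: stability.} With $\delta(G)\ge n/2-O(1)$ and triangle-freeness, the Andrásfai--Erdős--Sós theorem does not apply directly, because $\chi\ge4$ is allowed. Instead, by stability for Mantel's theorem, $G$ is $O(1)$-close to a balanced complete bipartite graph with parts $A,B$. Choose the partition $A\cup B$ maximizing the number of crossing edges. Then only $O(1)$ edges lie inside the parts, and there are only $O(1)$ "bad" vertices, meaning vertices with many neighbours in their own part or few neighbours across. Every good vertex of $A$ is adjacent to almost all of $B$.

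\textbf{Step 3: local structure via $\chi\ge4$.} Triangle-freeness forces each inside edge $uv$ in $A$ to have disjoint neighbourhoods in $B$, which costs about $|B|$ missing crossing edges per inside edge, unless $u$ and $v$ are bad. Since $\chi\ge4$, the inside edges within $A$ and within $B$ cannot be handled by a single extra colour on each side. In particular, $G[A]$ and $G[B]$ cannot both be bipartite-compatible: no $3$-colouring extends a proper $2$-colouring of $A\cup B$ after recolouring a few vertices. This forces a small configuration of bad vertices, namely a copy of some $4$-chromatic triangle-free graph built from $O(1)$ vertices. We then count the missing edges forced by this configuration. Each bad vertex adjacent to both sides loses about half its potential degree, and the counting reduces to a finite optimization. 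The optimization is to minimize the number of missing crossing edges subject to the small exceptional set inducing, together with the bulk, a triangle-free $4$-chromatic graph. The Grötzsch graph, being the smallest triangle-free $4$-chromatic graph, gives the optimum: its blow-up leaves $3$ vertices "wasted" and recovers $+5$ edges.

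\textbf{Main obstacle.} The hardest step is this finite optimization in Step 3. We must show that any exceptional configuration other than the one giving $F_1(n)$ loses at least one more edge. The plan is to classify by the number $k$ of exceptional vertices. If $k\ge 4$, the loss is at least about $(k-3)n/2$, which is too much. For $k\le3$, we must check that the bulk vertices can be split into classes whose neighbourhoods in the exceptional set make the whole graph $4$-chromatic. Doing this forces the bulk to split into the classes $V_1,V_2,V_3,W$ with the adjacencies of $F_1$. Equality then pins down the part sizes.
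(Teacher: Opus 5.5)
The paper does not prove this statement; it quotes it from Ren, Wang, Wang and Yang. The natural comparison is therefore with the paper's proof of the spectral analogue, Theorem~\ref{thm1.1}. Measured against that, your proposal has a genuine gap, and it starts in Step~1.

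\textbf{Step 1 aims for an object that cannot exist.} You want a graph with $\delta\ge n/2-O(1)$ that still has $\chi\ge 4$, and you say the Andr\'asfai--Erd\H{o}s--S\'os theorem ``does not apply because $\chi\ge4$ is allowed''. That theorem has no chromatic hypothesis: every triangle-free graph with minimum degree above $2n/5$ is bipartite. So Step~2 starts from an impossible hypothesis.

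Your supporting observation about $F_1(n)$ is also false:
\begin{itemize}
\item $w_{13}$ has degree exactly $4$, since none of $u_1,u_3,x,v_2$ is blown up;
\item $u_1,u_2,u_3,w_{23}$ have degrees $|V_1|+|V_2|+2$, $|V_3|+3$, $|V_1|+|V_3|+2$ and $|V_2|+3$.
\end{itemize}
The Gr\"otzsch graph is $4$-critical, so deleting any of these vertices leaves a $3$-colourable graph. Hence the case your induction/re-insertion fallback cannot handle (every low-degree vertex is critical) is exactly the extremal configuration. The sentence ``if $G-v$ is $3$-colourable then $G$ is nearly extremal'' restates the theorem rather than proving it. Nor is there a base case at $n=150$ that ``direct counting'' could settle. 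Separately, with a constant threshold $n/2-c$, the Mantel argument only bounds the number of deletions by $O(n/c)$, not $O(1)$.

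The paper's architecture does the opposite of Step~1: it keeps the low-degree set $S=\{v: d(v)\le 2n/5\}$.
\begin{itemize}
\item It shows $|S|=O(1)$ (Claim~\ref{Claim3.3}) and that $V_i\setminus S$ is independent (Claim~\ref{claim3.4}).
\item Using maximality, it shows that the classes $V_1(S_1)$ and $V_2(S_2)$ are complete or empty to each other according to whether $S_1\cap S_2=\varnothing$ (Claim~\ref{claim3.5}). Adding an edge increases $e(G)$ just as it increases $\rho$, so this step carries over to the edge problem.
\end{itemize}
The whole $4$-chromaticity is then carried by how $S$ attaches to boundedly many classes.

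\textbf{Your Step 3 accounting fails on the extremal family itself.} Take $|V_1|,|V_2|,|V_3|\approx n/6$ in $F_1(n)$. Then $u_1,u_2,u_3,w_{13},w_{23}$ all have degree at most about $n/3$, so $k=5$, yet equality holds. Their degrees sum to about $n$, so jointly they are worth two full vertices. Hence ``$k\ge4$ exceptional vertices cost at least $(k-3)n/2$'' is false.

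The optimization must instead control the total degree deficiency of $S$. It must also allow classes $V_1(S')$ with $S'\neq\varnothing$ of linear size. This is exactly where the edge problem departs from the spectral one:
\begin{itemize}
\item In the paper, the strict Perron-vector symmetrization of Claim~\ref{claim3.6} forces $|V_1(S')|\le 1$ for $S'\neq\varnothing$. That bounds the degrees of vertices in $S$ and then yields $|S|=3$ (Claim~\ref{claim3.7}).
\item For edges, the same symmetrization is only weakly monotone. That is why the equality case is a whole family.
\item In that family only $|V_1|+|V_2|+|V_3|$ versus $|W|$ is forced, not the individual part sizes as you assert.
\end{itemize}

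Finally, the finite optimization, which is the real content, is only announced. Knowing that the Gr\"otzsch graph is the smallest triangle-free $4$-chromatic graph does not show that its blow-up beats blow-ups of every other admissible bounded configuration. That comparison has to be carried out.
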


Clearly, $K_3$ is an odd cycle.
For longer odd cycles,
a result of Ren, Wang, Wang, and Yang implies that
$f(C_{2\ell+1},3)=\lfloor\frac{(n-2)^2}{4}\rfloor+3$ for $\ell\geq 2$ and $n\geq 318\ell^2$; see \cite[Theorem 1.3]{Ren2024}.

The \emph{adjacency matrix} of \(G\), denoted by \(A(G)\),
is an \(n\times n\) matrix whose \((i,j)\)-entry is \(1\) if \(v_i\) is adjacent to \(v_j\) and \(0\) otherwise.
Let $\rho(G)$ denote the \emph{spectral radius} of the graph $G$.
In 2007, Nikiforov \cite{Nikiforov-2007} established a spectral version of Tur\'{a}n theorem,
namely that $T_{n,r}$ is the unique $n$-vertex $K_{r+1}$-free graph attaining the maximum spectral radius.
Subsequently, he \cite{Nikiforov2008} provided a
spectral condition for the existence of $C_{2\ell+1}$ in a graph.

\begin{thm} \emph{(\cite{Nikiforov2008})}\label{Theorem1.2}
Let $G$ be a graph of sufficiently large order $n$ with $\rho(G)\geq \rho(T_{n,2})$.
Then $G$ contains a copy of $C_{\ell}$ for every $\ell\leq n/{320}$.
\end{thm}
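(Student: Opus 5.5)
The plan is to recover Nikiforov's argument in three stages: reduce to a dense graph with large minimum degree, obtain all even cycles from density alone, and then build all odd cycles from a large book. One caveat first: as written with ``$\geq$'', the case $G=T_{n,2}$ must be excluded for odd $\ell$, since $T_{n,2}$ is bipartite. So I prove the statement for $\rho(G)>\rho(T_{n,2})$, or equivalently for $G\neq T_{n,2}$. This is also the form in which it is used later in the paper.

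\emph{Reduction.} Set $\rho=\rho(G)\ge\sqrt{\lfloor n^2/4\rfloor}$. By $\rho\le\sqrt{2e(G)}$ we get $e(G)\ge n^2/8$. Next, repeatedly delete a vertex of degree less than $\rho/2$, say, and control how $\rho$ changes, using the Perron vector $x$ and the eigen-equation $\rho x_v=\sum_{u\sim v}x_u$. The point is that deleting such a vertex lowers $\rho^2$ by only a bounded amount. This yields an induced subgraph $H$ on $m\ge n/2$ vertices with $\delta(H)\ge cn$ and $\rho(H)>m/2$, so $H$ is still above the bipartite Tur\'an threshold on its own vertex set. We lose only a constant factor in $n$, which is absorbed into the constant $320$.

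\emph{Even cycles.} $H$ has $\Omega(n^2)$ edges and linear minimum degree. It therefore contains a bipartite subgraph of linear minimum degree, obtained by taking a max cut and cleaning it. A standard path-extension argument of Bondy--Simonovits/Woodall type in such a graph produces $C_{2k}$ for every $2\le k\le cn$. Concretely, fix a long path and use the linear degree of its endpoints to close cycles of every even length.

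\emph{Odd cycles (the main obstacle).} I would use the spectral book theorem of Bollobás--Nikiforov: if $\rho(G)>\rho(T_{n,2})$, then $G$ contains a book of size greater than $n/144$, that is, an edge $uv$ with a set $S$ of more than $n/144$ common neighbours. A triangle follows directly, and this already gives $C_3$. For $C_{2k+1}$ with $k\ge2$, I join $u$ to $v$ through an even path of length $2k$ that avoids the edge $uv$. Closing such a path with $uv$ gives a cycle of length $2k+1$.
\begin{itemize}
\item $u$ and $v$ each have many neighbours in $S$, and $S$ sits inside the high-minimum-degree graph $H$.
\item Using the path-extension machinery from the even case, restricted to $H-\{u,v\}$, I find paths of every even length up to linear size between two vertices $s,s'\in S$. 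Then $u\,s\cdots s'\,v\,u$ has the desired odd length.
\end{itemize}
The difficulty is guaranteeing paths of exactly the right parities and lengths between prescribed endpoint sets. I would handle this with a connectivity/absorption argument in the dense bipartite part. Tracking the constants through these steps is what produces the bound $\ell\le n/320$.
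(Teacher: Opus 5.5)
The paper gives no proof of this statement. It is quoted from Nikiforov and used only for $\ell=3$, in the contrapositive form ``triangle-free implies $\rho\le\rho(T_{n,2})$'', which also follows directly from Nosal's bound $\rho\le\sqrt{e(G)}$ and Mantel's theorem. Your caveat is right: with $\geq$ the statement fails for $G=T_{n,2}$ and odd $\ell$, and Nikiforov's hypothesis is $\rho(G)>\sqrt{\lfloor n^2/4\rfloor}$.

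Judged on its own, however, your proposal has a genuine gap at the only step with real content. You reduce $C_{2k+1}$ to finding, in $H-\{u,v\}$, a path of length exactly $2k-2$ joining two vertices of $S$, and then only announce a ``connectivity/absorption argument'' for it. Nothing you have set up controls exact length, parity and endpoints at once. The argument you sketch for even cycles (closing a long path through an endpoint's neighbours) gives many lengths, with possible gaps, and no control of the endpoints. Moreover, the explicit range $\ell\le n/320$ cannot come from ``tracking constants'' through steps whose constants are never fixed (the $c$ in $\delta(H)\ge cn$, the range of path lengths). As written you have $C_3$, plus even cycles up to some unspecified $cn$ once you cite a theorem on consecutive even cycles in graphs of linear minimum degree. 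That is not the theorem.

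The missing step is in fact elementary, and it also makes your separate even-cycle step unnecessary. Every $s\in S$ has at least $\delta(H)-2$ neighbours in $W=V(H)\setminus\{u,v\}$. So either $H[S]$ or the bipartite graph between $S$ and $W\setminus S$ has many edges, and Erd\H{o}s--Gallai gives a long path in it. Vertices of $S$ occupy every position (respectively, every other position) of that path. Hence, for each $j\ge 1$ up to a linear bound, it contains a subpath $P$ with $2j$ edges and both ends $s,s'\in S$. Then $u\,s\,P\,s'\,u$ and $u\,s\,P\,s'\,v\,u$ are cycles of lengths $2j+2$ and $2j+3$.

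With your inputs, though, this does not reach $n/320$. You have $|S|>m/144$ and a guaranteed minimum degree $\gamma m$ with $\gamma<1/4$. The bipartite case of this count then only guarantees lengths up to about $\gamma m/144<n/576$. So the constant $1/320$ needs sharper ingredients, which the proposal does not supply.

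The reduction also needs repair. By $\rho^2(G)\le\rho^2(G-w)+2d_G(w)$, deleting $w$ can lower $\rho^2$ by up to $2d(w)$, which is linear, not bounded. With your threshold $\rho/2\approx m/4$, the excess $\rho^2-\lfloor m^2/4\rfloor$ need not grow, so nothing stops the graph from shrinking, and ``$m\ge n/2$'' is unjustified. You should delete only vertices of degree at most $\gamma m$ for a fixed $\gamma<1/4$. Then each deletion raises the excess by $\Omega(m)$, while the excess always stays below $3m^2/4$; this forces $m\ge c(\gamma)n$. Finally, the book theorem must be applied to $H$, not $G$, if $S$ is to lie in $H$.
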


Theorem \ref{Theorem1.2} was first slightly refined by Ning and Peng \cite{Ning2020} for $n \geq 160t$.
Susequently, Zhai and Lin \cite{Zhai2023} employed different techniques to remove the ``sufficiently large order $n$'' condition, improving the bound to $n \geq 7t$. Building on this work, Li and Ning \cite{Li-N2023} further advanced the result, establishing the bound $n \geq 4t$.
A graph $H$ is called \emph{color-critical} if
 there exists an edge $e\in E(H)$ such that $\chi(H-\{e\})<\chi(H)$,
where $\chi(H)$ denotes the \emph{chromatic number} of $H$.
Let $SK_{a,b}$ denote the graph obtained from $K_{a,b}$ by subdividing an edge,
and $K_{a,b}\circ K_3$ denote the graph obtained by identifying a vertex of $K_{a,b}$
belonging to the part of size $b$ and a vertex of $K_3$.
Given that the unique extremal graph for Theorem \ref{Theorem1.2} is bipartite,
it is natural to consider this problem for non-bipartite graphs of order $n$.

\begin{thm}%\label{Theorem1.3}
Let $G$ be a $C_{2\ell+1}$-free graph of order $n$, where $\ell\geq 1$.

\vspace{1mm}
{\rm (i)} \emph{(\cite{Lin-N2021})}
If $\ell=1$ and $n\geq 5$, then $\rho(G)\leq \rho(SK_{\lceil\frac{n-1}{2}\rceil,\lfloor\frac{n-1}{2}\rfloor})$,
with equality if and only if $G\cong SK_{\lceil\frac{n-1}{2}\rceil,\lfloor\frac{n-1}{2}\rfloor}$.

\vspace{1mm}
{\rm (ii)} \emph{(\cite{Guo20212})}
If $\ell= 2$ and $n\geq 21$, then $\rho(G)\leq \rho(K_{\lceil\frac{n-2}{2}\rceil,\lfloor\frac{n-2}{2}\rfloor}\circ K_3)$,
with equality if and only if $G\cong K_{\lceil\frac{n-2}{2}\rceil,\lfloor\frac{n-2}{2}\rfloor}\circ K_3$.

{\rm (iii)} \emph{(\cite{Zhang2023})}
If $\ell\geq 2$ and $n$ is sufficiently large with respect to $k$, then $\rho(G)\leq \rho(K_{\lceil\frac{n-2}{2}\rceil,\lfloor\frac{n-2}{2}\rfloor}\circ K_3)$,
with equality if and only if $G\cong K_{\lceil\frac{n-2}{2}\rceil,\lfloor\frac{n-2}{2}\rfloor}\circ K_3$.
\end{thm}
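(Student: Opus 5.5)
The statement as written presupposes that $G$ is non-bipartite; otherwise $K_{\lceil n/2\rceil,\lfloor n/2\rfloor}$ would beat both claimed bounds. The plan is therefore to treat $G$ as a spectral extremal graph among non-bipartite $C_{2\ell+1}$-free graphs of order $n$ and show that it is the stated graph. The approach is the standard ``stability plus eigenvector'' scheme.

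\emph{Lower bound and approximate structure.} First record the lower bound. Both $SK_{\lceil\frac{n-1}{2}\rceil,\lfloor\frac{n-1}{2}\rfloor}$ (for $\ell=1$) and $K_{\lceil\frac{n-2}{2}\rceil,\lfloor\frac{n-2}{2}\rfloor}\circ K_3$ (for $\ell\ge 2$) are non-bipartite and $C_{2\ell+1}$-free. By computing their characteristic polynomials via equitable partitions, one gets $\rho(G)\ge \rho(T_{n,2})-O(1/n)$, in particular $\rho(G)\ge \sqrt{\lfloor n^2/4\rfloor}-1$ for large $n$. Since $\rho^2\le$ the number of closed $2$-walks per vertex on average is too weak, use instead $e(G)\ge \rho(G)^2/\dots$; more precisely, combine $\rho(G)\le\sqrt{2e(G)}$-type bounds with the Erd\H{o}s--Simonovits stability theorem for $C_{2\ell+1}$ (or the spectral stability of Nikiforov). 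This gives that $G$ differs from a balanced complete bipartite graph $K_{n/2,n/2}$ on a partition $V=S\cup T$ in at most $\varepsilon n^2$ edges.

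\emph{Cleaning via the Perron vector.} Let $x$ be the Perron vector with $\max x_v=1$. From $\rho x_v=\sum_{u\sim v}x_u$ and $\rho\approx n/2$, show that the set $L$ of vertices of small degree (say $d(v)\le(1/2-\delta)n$) has bounded size. Show also that every vertex outside $L$ has $x_v\ge 1-O(\delta)$ and has almost all of its neighbours on the opposite side. Next, show that $G[S\setminus L]$ and $G[T\setminus L]$ contain no edges. An edge inside $S\setminus L$ together with the huge common bipartite neighbourhoods yields a $C_{2\ell+1}$ directly, by routing a path of odd length through the near-complete bipartite graph; for $\ell=1$ the two endpoints share a common neighbour in $T$. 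Hence all odd cycles must pass through $L$, and $L\neq\emptyset$ since $G$ is non-bipartite.

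\emph{Exact structure (main obstacle).} One now compares $G$ with the candidate. For $\ell=1$, a vertex $w\in L$ creating an odd cycle cannot have neighbours $u,v$ on the same side if $u,v$ have a common neighbour. Hence essentially $w$ has one neighbour on each side, or has a few neighbours on one side plus a structure of bounded size. Then a rotation argument shows that $\rho$ increases unless $|L|=1$, $w$ has degree $2$ with one neighbour in each part, and all other edges between $S$ and $T$ are present except the one between the neighbours of $w$. The rotation moves $L$-edges to vertices of larger Perron entry, or deletes an edge and adds another, while keeping the graph non-bipartite and $C_{2\ell+1}$-free, and compares via $x^TA'x-x^TAx$. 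This configuration is exactly $SK_{\lceil\frac{n-1}{2}\rceil,\lfloor\frac{n-1}{2}\rfloor}$. For $\ell\ge2$ the analogous analysis forces an odd cycle to be a triangle hanging at a single high-degree vertex: a short odd cycle through $L$ of length $\ne 2\ell+1$, chosen to cost the fewest edges. This gives $K_{\lceil\frac{n-2}{2}\rceil,\lfloor\frac{n-2}{2}\rfloor}\circ K_3$.

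The delicate step is this final comparison. The competing configurations differ in $\rho$ only by $\Theta(1/n)$, so one needs precise second-order estimates of $x$ on $L$ (e.g.\ $x_w\approx d(w)/\rho$). The balance of the part sizes is then settled by comparing the characteristic polynomials of the quotient matrices directly. I expect this to be the main obstacle: the requirement that $n$ be large relative to $\ell$ in (iii) enters precisely in this step, while (i) and (ii) with explicit bounds need a sharper, more hands-on treatment of small cases.
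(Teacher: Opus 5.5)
The paper gives no proof of this theorem. It is background quoted from three earlier papers, so there is nothing internal to compare with. You are right that the transcription omits the hypothesis that $G$ is non-bipartite, and the ``$k$'' in (iii) should be $\ell$. Judged on its own, the proposal has a genuine gap in scope: the stability-plus-Perron-vector scheme is intrinsically asymptotic. Lemma~\ref{Lem2.1} requires $\frac{1}{\ln n}<c<r^{-8(r+21)(r+1)}$, i.e.\ $\ln n>2^{552}$ for $r=2$, and Lemma~\ref{Lem2.3} also needs $n$ large. So the ``small cases'' are not something a hands-on check can absorb. At best you obtain (iii) and large-$n$ versions of (i) and (ii). The claims for every $n\ge 5$ and every $n\ge 21$ need a non-asymptotic argument, and you supply none. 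One possible basis is a shortest odd cycle: a vertex off a shortest odd cycle of length at least $5$ has at most two neighbours on it, and this can be combined with the eigen-equations.

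Even for large $n$, where your outline is the framework the paper itself uses for Theorem~\ref{thm1.1}, the decisive exact-structure step is only gestured at, and some of what you say is wrong.
\begin{itemize}
\item The candidates lie $\Theta(1)$ below $\rho(T_{n,2})$, not $O(1/n)$: about $\frac12$ below for $SK_{a,b}$ and about $1$ below for $K_{a,b}\circ K_3$.
\item $\rho\le\sqrt{2e}$ gives only $e\ge(\frac18-o(1))n^2$, which is useless for Erd\H{o}s--Simonovits. Use Nosal's $\rho\le\sqrt e$ for $\ell=1$, and Lemma~\ref{Lem2.1} for $\ell\ge 2$, since $K_3(s,s,t)$ with $s,t\ge\ell$ contains $C_{2\ell+1}$.
\end{itemize}

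The structure is forced by a two-scale count. Each vertex of $L$ costs about $\frac12$ (Lemma~\ref{Lem2.4}, once its degree is shown to be bounded). Each missing cross edge costs about $\frac2n$ (Lemma~\ref{Lem2.3}).

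For $\ell=1$ your local constraint is incorrect: two neighbours of $w$ on the same side with a common neighbour span a $C_4$, not a triangle. The real constraint is that if $w$ has $a$ neighbours in $S$ and $b$ in $T$, then all $ab$ cross pairs are non-edges. Minimizing the cost ($|L|=1$, $a=b=1$, one missing edge) is what yields $SK_{a,b}$.

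For $\ell\ge 2$ you must show three things:
\begin{itemize}
\item $|L|=1$ is impossible. If $w\sim u\in S$ and $w\sim v\in T$, every $u$--$v$ path of length $2\ell-1$ closes a $C_{2\ell+1}$, and destroying all of them is far too expensive.
\item For $|L|=2$, the two vertices span a triangle with a single common neighbour and have degree exactly $2$, since any further neighbour in the bipartite part closes a $C_{2\ell+1}$.
\item The parts must then be balanced, via a quotient-polynomial comparison as in Lemma~\ref{lem4.2}.
\end{itemize}

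Your ``rotations'' must also be checked to preserve non-bipartiteness and $C_{2\ell+1}$-freeness. The moves that are safe for triangles in Claims~\ref{claim3.5} and~\ref{claim3.6} are adding cross edges and copying neighbourhoods to remain a blow-up of a triangle-free graph. They are not safe here: doubling two vertices of $K_3$ already creates a $C_5$. So controlling the vertices of $L$ when $\ell\ge 2$ needs its own argument.
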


Let $G(r,s)$ denote a $K_{r+1}$-free graph of order $n$ with chromatic number at least $s$
that maximizes the spectral radius.
Since the Tur\'{a}n graph $T_{n,r}$ is $K_{r+1}$-free and satisfies $\chi(T_{n,r})=4$,
the spectral Tur\'{a}n theorem directly implies that $G(r,s)=T_{n,r}$ for all $s\leq r$.
The problem becomes more interesting when $s\geq r+1$.
For the case $s=r+1=3$,
Lin, Ning, and Wu \cite{Lin-N2021} characterized the unique spectral extremal graph $G(2,3)$
for the case $s=r+1=3$.
This result was subsequently generalized to all $s=r+1\geq 3$ by Li and Peng \cite{Li-P2023}.
Despite these developments and other related works \cite{Liu2025,Nikiforov2010,Zou2025+},
determining $G(r,s)$ for $s\geq r+2$ remains an open and challenging problem.
Motivated by this gap,
we focus on the next natural case and characterize the unique extremal graph $G(2,4)$.
To state our findings, we first define a specific graph construction:
let $F_1(s,t)$ be the graph obtained from $F_1$ by replacing $x$ and $y$ with independent sets of sizes $s$ and $t$, respectively,
such that two vertices of $F_1(s,t)$ are adjacent if and only if their corresponding original vertices in $F_1$ are adjacent (see $F_1$ in Figure \ref{fig-1.1A}).

\begin{thm}\label{thm1.1}
For sufficiently large $n$, we have
 $G(2,4)=F_1\big(\big\lfloor\frac{n-11}{2}\big\rfloor,\big\lceil\frac{n-7}{2}\big\rceil\big)$.
\end{thm}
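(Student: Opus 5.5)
Let $G$ be an extremal graph, i.e.\ a triangle-free graph of order $n$ with $\chi(G)\ge 4$ and maximum spectral radius, and let $\mathbf{x}$ be its Perron vector, normalized so that $\max_v x_v=1$. The proof has four stages: a lower bound on $\rho(G)$, structural stability, local refinement, and a final comparison of candidates.

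First, since $F_1(\lfloor\frac{n-11}{2}\rfloor,\lceil\frac{n-7}{2}\rceil)$ is triangle-free and $4$-chromatic, we get $\rho(G)\ge \rho(F_1(s,t))$. This graph contains $K_{s,t}$ with $s+t=n-9$, plus $9$ further vertices. So an equitable-partition estimate gives $\rho(G)\ge \frac{n}{2}-O(1)$, and more precisely $\rho(G)^2\ge \frac{(n-7)(n-11)}{4}+c$ for an explicit $c$. By Nikiforov's spectral stability (or the Füredi--Nikiforov type removal lemma together with $e(G)\ge \rho^2$-type counting via Theorem~\ref{Theorem1.1}), $G$ is $o(n^2)$-close to $T_{n,2}$. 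Hence there is a max-cut partition $V=A\cup B$ with $e(A)+e(B)=o(n^2)$ and $|A|,|B|=\frac n2+o(n)$.

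Second, we run the standard cleaning argument. Let $L$ be the set of vertices with small degree or small Perron entry ($x_v<1-\varepsilon$, or many neighbours on their own side). Using the eigen-equation $\rho x_v=\sum_{u\sim v}x_u$ and edge-switching (if deleting a vertex's edges and joining it to the opposite side does not decrease $\rho$ while keeping triangle-freeness and $\chi\ge4$, extremality is contradicted), we show $|L|=O(1)$. We also show that $G-L$ is bipartite with parts $A'\subseteq A$, $B'\subseteq B$, and that almost all $A'$–$B'$ pairs are edges. Triangle-freeness then forces each vertex in $L$ to have neighbourhood essentially inside one side. Since $G-L$ is bipartite and nearly complete, every vertex of $L$ adjacent to both sides has few neighbours on at least one side. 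Then $\chi(G)\ge4$ forces $G[L\cup N(L)]$ to carry a $4$-chromatic triangle-free "gadget" interacting with the two big parts.

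Third, we reduce to a finite optimization. Because $\rho(G)\approx n/2$ and big-part vertices have $x_v\approx 1$, each missing edge between $A'$ and $B'$ costs about $2/n$ in $\rho$. Vertices of $L$ also contribute only $O(1/n)$ corrections. Comparing via the Rayleigh quotient $\rho\ge \mathbf{x}^TA\mathbf{x}/\mathbf{x}^T\mathbf{x}$ applied to modified graphs, $\rho(G)$ is determined up to $O(1/n^2)$ by a "loss count": the number of missing cross edges plus the sizes of the exceptional vertices. Minimizing this loss is essentially the edge problem of Theorem~\ref{Theorem1.1}. That theorem says the minimum cost of making a near-complete bipartite graph $4$-chromatic and triangle-free is achieved by the Grötzsch-type structure in which exactly two vertices, $x$ and $y$, of $F_1$ are blown up. This is the main step where I expect difficulty. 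Unlike the edge case, the spectral objective weights vertices by Perron entries, so blowing up $V_1,V_2,V_3$ (edge-extremal $F_1(n)$) versus blowing up only $x$ and $y$ must be compared. My plan is to show that any extremal $G$ has the form of $F_1$ with a blown-up independent pair plus bounded extra vertices, and to enumerate the finitely many blow-up patterns, which are those classes of $F_1$ whose union forms a complete bipartite spine. For each pattern I compute $\rho$ via the quotient matrix of the equitable partition and compare asymptotic expansions $\rho=\frac{n}{2}+a+\frac{b}{n}+O(n^{-2})$. Blowing up the pair $\{x,y\}$ should win because it keeps only $9$ vertices outside the spine, while the patterns blowing up $V_1,V_2,V_3$ put weight on vertices of smaller degree.

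Finally, with the pattern fixed as $F_1(s,t)$ with $s+t=n-9$, we optimize the sizes. The quotient characteristic polynomial is symmetric enough that $\rho$ is maximized when $s$ and $t$ are as balanced as the asymmetry of the gadget allows: $y$ has $5$ outside neighbours and $x$ has $3$. A perturbation computation shows that the optimum is $t-s=2$ or $3$ depending on parity, which gives $s=\lfloor\frac{n-11}{2}\rfloor$ and $t=\lceil\frac{n-7}{2}\rceil$, and uniqueness follows.
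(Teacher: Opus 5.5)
\textbf{Verdict.} There is a genuine gap: Stage 3, which carries the real content of the proof, is only a plan, and the route you propose for it does not work. Your Stages 1, 2 and 4 do match the paper's outline (stability via Lemma~\ref{Lem2.1}, a bounded exceptional set whose removal leaves a bipartite graph, and a final quotient-matrix comparison as in Lemma~\ref{lem4.2}).

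\textbf{Why the Stage 3 route fails.} Theorem~\ref{Theorem1.1} is an exact result about graphs attaining $\lfloor\frac{(n-3)^2}{4}\rfloor+5$ edges, and it has no stability version. For the spectral extremal $G$ you only know $e(G)\ge\rho^2(G)\ge\frac{(n-3)^2}{4}-O(1)$. So the theorem tells you nothing about which $4$-chromatic gadget $G$ contains.

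The two objectives also genuinely differ. Edges at bounded-degree vertices count fully in $e(G)$ but move $\rho$ only negligibly (Claim~\ref{claim3.9D}). Moreover, Theorem~\ref{Theorem1.1} has a whole family $F_1(n)$ of extremal graphs, obtained by blowing up $v_{13},v_1,v_{23},y$ (not $x,y$ as you first say), whereas the spectral extremal graph is unique. Hence ``enumerating blow-up patterns of $F_1$'' assumes exactly what must be shown: that $G$ is a blow-up of $F_1$ with three bounded-degree exceptional vertices.

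\textbf{Your lower bound is too weak.} The bound $\rho^2\ge\frac{(n-7)(n-11)}{4}+c$ with $c$ constant falls short by about $3n$, so it cannot drive a loss count. The right comparison is not the spine $K_{s,t}$ but $F_1(s,t)-\{u_1,u_2,u_3\}$, which is $T_{n-3,2}$ minus $6$ edges. This gives $\rho(G)\ge\frac{n-3}{2}-\frac{13.2}{n}$, and that precision is what every later exclusion uses.

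\textbf{How the paper fills Stage 3.}
\begin{enumerate}
\item Let $S$ be the vertices of degree at most $\frac{2n}{5}$; then $|S|\le 20$ by edge counting. The remaining vertices split into two independent sets, and $G$ is a blow-up of a bounded graph $F$ whose classes are the sets $V_i(S')$ (Claim~\ref{claim3.5}).
\item Suppose two classes on the same side each had at least two vertices. Make a vertex of the class with smaller Perron entry a clone of a vertex of the other class. The result is still a blow-up of $F$, hence triangle-free with $\chi\ge 4$, and $\rho$ strictly increases (Claim~\ref{claim3.6}). This is the mechanism your edge-switching lacks: moving a vertex to the opposite side need not preserve $\chi\ge4$ if that vertex belongs to the gadget. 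It also forces every vertex of $S$ to have bounded degree.
\item Lemma~\ref{Lem2.4} then gives $|S|\le 3$, and coloring arguments give $|S|\ge 3$.
\item An odd cycle in $G[S\cup V_1']$ must be a $5$-cycle $u_1u_2v_{23}u_3v_{13}$.
\item Lemma~\ref{Lem2.3} together with Claim~\ref{claim3.9D} shows that seven missing cross edges in $G-S$ would give $\rho(G)\le\frac{n-3}{2}-\frac{13.8}{n}$, contradicting the lower bound.
\item A short case analysis using $\chi\ge 4$ yields $G\cong F_1(\frac{n-9-k}{2},\frac{n-9+k}{2})$, with $|k|\le 20$ by Lemma~\ref{Lem2.4} again.
\end{enumerate}
Only at this point does your Stage 4 apply.

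\textbf{A further point on Stage 4.} For even $n$, the candidates $t-s=1$ and $t-s=3$ first differ in the $n^7$ coefficient of the quotient polynomial, i.e.\ at order $n^{-3}$ in $\rho$. So an expansion $\frac n2+a+\frac bn+O(n^{-2})$ cannot decide between them.
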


%\begin{thm}\label{thm1.1}
%Let $G$ be a triangle-free graph of order $n$ with $\chi(G)\geq 4$.
%For sufficiently large $n$, we have
%$$\rho(G)\leq \rho\Big(F_1\big(\big\lfloor\frac{n-11}{2}\big\rfloor,\big\lceil\frac{n-7}{2}\big\rceil\big)\Big),$$
%with equality if and only if $G\cong F_1\big(\big\lfloor\frac{n-11}{2}\big\rfloor,\big\lceil\frac{n-7}{2}\big\rceil\big)$.
%\end{thm}

Interestingly, by Theorem \ref{Theorem1.1}, the spectral extremal graph $F_1(\big\lfloor\frac{n-11}{2}\big\rfloor,\big\lceil\frac{n-7}{2}\big\rceil)$
is also an edge-extremal graph
with respect to $f(K_3,4)$ for all $n\geq 150$.

\section{Preliminaries}%\label{section2}

We begin by introducing some definitions and notations.
Let $G$ be a simple graph.
We denote its vertex set by $V(G)$, its edge set by $E(G)$, and the number of its edges by $e(G)$.
For two disjoint vertex subsets $U,V\subseteq V(G)$,
let $G[U]$ be the subgraph induced by $U$, and define $G-U=G[V(G)\setminus U]$.
Let $G[U,V]$ be the bipartite subgraph on the vertex set $U\cup V$
consisting of all edges with one endpoint in $U$ and the other in $V$.
When no confusion arises, we write $e(U)=e(G[U])$ and $e(U,V)=e(G[U,V])$, respectively.
Given a vertex $u\in V(G)$ and a subset $U\subseteq V(G)$ (possibly, $u\notin U$),
we denote by $N_G(u)$ the set of neighbors of $u$ in $G$,
and set $N_U(u)=N_G(u)\cap U$.
Let $d_G(u)=|N_G(u)|$ and $d_U(u)=|N_G(u)\cap U|$.
In this section, we first list some lemmas
that will be used in the proof of our main result.
The following is the spectral version of the Stability Lemma due to Nikiforov \cite{Nikiforov-2009}.

\begin{lem}\label{Lem2.1}\emph{(\cite{Nikiforov-2009})}
Let $r\ge 2$, $\frac{1}{\ln n}<c<r^{-8(r+21)(r+1)}$, $0<\varepsilon<2^{-36}r^{-24}$ and $G$ be an $n$-vertex graph.
If $\rho(G)>(1-\frac1r-\varepsilon)n$, then one of the following holds:

\vspace{1mm}
{\rm (i)} $G$ contains a $K_{r+1}(\lfloor c\ln n\rfloor, \dots,\lfloor c\ln n\rfloor,\lceil n^{1-\sqrt{c}}\rceil)$;

\vspace{1mm}
{\rm (ii)} $G$ differs from $T_{n,r}$ in fewer than $(\varepsilon^{\frac{1}{4}}+c^{\frac{1}{8r+8}})n^2$ edges.
\end{lem}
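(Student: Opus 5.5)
The plan is to argue by contraposition. Assume $G$ contains no copy of $K_{r+1}(\lfloor c\ln n\rfloor,\dots,\lfloor c\ln n\rfloor,\lceil n^{1-\sqrt c}\rceil)$, and show that $G$ is $(\varepsilon^{1/4}+c^{1/(8r+8)})n^2$-close to $T_{n,r}$.

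\textbf{Step 1: few cliques.} The absence of the blown-up clique in (i) forces $G$ to contain few copies of $K_{r+1}$. I would invoke the supersaturation/blow-up theorem of Nikiforov (a quantitative Erd\H{o}s--Stone--Bollob\'as statement): if an $n$-vertex graph has at least $\delta n^{r+1}$ copies of $K_{r+1}$, with $\delta$ roughly $c^{1/(r+1)}$, then it contains $K_{r+1}(\lfloor c\ln n\rfloor,\dots,\lceil n^{1-\sqrt c}\rceil)$. The upper bound $c<r^{-8(r+21)(r+1)}$ is exactly what makes this applicable. Hence $G$ has fewer than $\delta n^{r+1}$ copies of $K_{r+1}$, with $\delta$ a small power of $c$.

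\textbf{Step 2: pass from spectral radius to minimum degree.} Since $\rho(G)\le\sqrt{2e(G)}$ only gives $e(G)\gtrsim(1-\frac1r)^2n^2/2$, which is too weak, I would run a vertex-deletion procedure instead.
\begin{itemize}
\item Repeatedly delete a vertex whose current degree is below $(1-\frac1r-\sqrt\varepsilon)$ times the current order.
\item Control the drop in $\rho$ using the Perron vector, via the inequality $\rho(G-v)\ge \rho(G)-\dots$, or equivalently by comparing $\rho^2$ with the sum of degree-weighted walks.
\item The goal is to show that fewer than $\sqrt\varepsilon\, n$ vertices are deleted; otherwise the remaining graph would have spectral radius exceeding its order times $1-\frac1r$ plus a margin, which is impossible.
\end{itemize}
The output is an induced subgraph $H$ on $n'\ge(1-\sqrt\varepsilon)n$ vertices with $\delta(H)\ge(1-\frac1r-\sqrt\varepsilon)n'$. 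I expect this step to be the main obstacle. The difficulty is the careful bookkeeping that keeps the loss in $\rho$ per deletion small enough to sum over many deletions, and that keeps the constants compatible with $\varepsilon<2^{-36}r^{-24}$.

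\textbf{Step 3: structure of $H$.} $H$ still has few copies of $K_{r+1}$. I would first apply the removal lemma, or better, to avoid tower-type constants, a direct counting argument exploiting the high minimum degree. This deletes $O(\delta^{1/(r+1)} n^2)$ edges and yields a $K_{r+1}$-free graph $H'$ with $e(H')\ge e(T_{n',r})-O((\sqrt\varepsilon+\delta')n^2)$. Then the Erd\H{o}s--Simonovits/F\"uredi stability theorem shows that $H'$ can be made $r$-partite by removing $O(\varepsilon^{1/4}n^2)$ further edges. The edge count then forces the parts to be nearly balanced, so $H$ differs from $T_{n',r}$ in few edges.

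\textbf{Step 4: reinsert the deleted vertices.} Adding back the at most $\sqrt\varepsilon\, n$ deleted vertices changes at most $\sqrt\varepsilon\, n^2$ adjacencies. Collecting the errors from Steps 1–3 gives a total below $(\varepsilon^{1/4}+c^{1/(8r+8)})n^2$, which is alternative (ii).
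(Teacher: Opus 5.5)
The paper does not prove this lemma; it is quoted from Nikiforov (2009) and used as a black box, so your outline has to stand on its own. Step 1 is fine. Nikiforov's clique theorem, applied with $c^{1/(r+1)}n^{r+1}$ copies of $K_{r+1}$, gives parts of size $\lfloor c\ln n\rfloor$ and a last part larger than $n^{1-c^{r/(r+1)}}\ge n^{1-\sqrt c}$. It needs only $c\ln n\ge 1$, not the upper bound on $c$.

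The genuine gap is Step 2. It carries the whole spectral-to-combinatorial transfer, you leave it open, and the mechanism you sketch cannot work as stated. Deleting $v$ lowers $\rho$ by roughly $\rho x_v^2$, so $\rho/|V|$ drops as soon as $x_v^2$ noticeably exceeds $1/|V|$. Low degree does not force a small Perron entry. For example, take $K_{n/2}$, a vertex $u$ joined to $(\frac12-\beta)n$ of its vertices, and $\frac n2-1$ further vertices spanning an arbitrary graph with no edges to the rest. Then $\rho(G)=\frac n2-4\beta+O(\beta^2)+o(1)$ while $\rho(G-u)=\frac n2-1$, so $\rho(G-u)/(n-1)<\rho(G)/n$ for $\beta<\frac18$. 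With $r=2$ and $\beta=2\sqrt\varepsilon$, this $G$ satisfies the hypothesis and $u$ lies below your deletion threshold. Hence no inequality $\rho(G-v)\ge\rho(G)-f(d(v),|V|,\rho(G))$ valid for all graphs can make the ratio grow along your deletion sequence. So ``more than $\sqrt\varepsilon n$ deletions would leave a graph of large ratio'' does not follow from step-by-step bookkeeping. You need a genuinely global statement, for instance a dichotomy producing a large induced subgraph that has either $\rho(H)/|H|$ substantially above the starting ratio or minimum degree above the threshold. Proving such a statement is the real missing content.

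Even granting such a lemma, a large ratio is not ``impossible''. The bound $\rho(H)\ge(1-\frac1r+\gamma)|H|$ only gives, by spectral supersaturation (Bollob\'as--Nikiforov), at least $c_r\gamma|H|^{r+1}$ copies of $K_{r+1}$. This contradicts Step 1 only if $\gamma$ exceeds a constant multiple of $c^{1/(r+1)}$. With a threshold depending on $\varepsilon$ alone, the margin you gain is a function of $\varepsilon$, so when $\varepsilon$ is small compared with $c$ there is no contradiction. The deletion has to be run with an effective parameter such as $\max\{\varepsilon,c^{\theta}\}$ (enlarging $\varepsilon$ is harmless for the hypothesis), and this coupling of $\varepsilon$ and $c$ is absent from your plan.

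Step 3 also does not deliver the polynomial errors you claim. The removal lemma has tower-type dependence, so it cannot produce a term like $c^{1/(8r+8)}n^2$, and the ``direct counting argument'' is not given. Once you have a large induced $H$ with $\delta(H)\ge(1-\frac1r-\sqrt\varepsilon)|H|$, hence $e(H)\ge(1-\frac1r-\sqrt\varepsilon)\frac{|H|^2}{2}$, the natural tool is Nikiforov's edge version of this statement (Stability for large forbidden subgraphs, J.~Graph Theory 2009). It treats ``no $K_{r+1}(\lfloor c\ln n\rfloor,\dots,\lceil n^{1-\sqrt c}\rceil)$ plus many edges'' with polynomial error terms, after a small adjustment of $c$ for the smaller order $|H|$. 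Your Step 4 then goes through.
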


%From Lemma \ref{Lem2.1}, Desai et al. \cite{Desai-2022} derived the following stability result.
%Lemma \ref{Lem2.1} and the subsequent lemma provide an effective approach for studying spectral extremal problems.
%
%\begin{lem} \label{Lem2.2}\emph{(\cite{Desai-2022})}
%Let $F$ be a graph with  $\chi(F)=r+1$.
%For every $\varepsilon>0$, there exist $\delta>0$ and $n_0$ such that
%if $G$ is an $F$-free graph on $n\ge n_0$ vertices with $\rho(G)\geq (1-\frac1r-\delta)n$,
%then $G$ can be obtained from $T_{n,r}$ by adding and deleting at most $\varepsilon n^2$ edges.
%\end{lem}

%Based on Nikiforov's result \cite[Theorem 2]{Nikiforov-2009-2} and a more detailed analysis of the equality case in his proof, one can derive the following spectral version of the color-critical theorem, as presented by Zhai and Lin \cite[Theorem 1.2]{Zhai2023}.
%
%
%\begin{lem} \label{Lem2.3}\emph{(\cite{Nikiforov-2009-2, Zhai2023})}
%Let $r\geq 2$ and $H$ be a color-critical graph with $\chi(H)=r+1$.
%Then there exists an $n_0(H)\geq e^{|V(H)|r^{(2r+9)(r+1)}}$ such that
% ${\rm SPEX}(n,H)=\{T_{n,r}\}$ provided $n\geq n_0(H)$.
%\end{lem}

Let $K = K_r(n_1,\dots,n_r)$ be the complete $r$-partite graph
with partite classes of sizes $n_1\ge \cdots \ge n_r$.
An edge added inside a partite class is called a class-edge,
and an edge removed between two distinct partite classes is called a cross-edge.
The following result gives an estimate of \(\rho(G)\) for any graph $G$ that differs from a complete multipartite graph by only a small number of edges.

\begin{lem}[\cite{Fang2025+}]\label{Lem2.3}
Let $n$ be sufficiently large, and let $G$ be a graph obtained from an $n$-vertex complete $r$-partite graph $K=K_r(n_1,n_2,\dots,n_r)$ by
 adding $\alpha_1$ class-edges and deleting  $\alpha_2$ cross-edges,
where $\max\{\alpha_1,\alpha_2\} \le \frac{n}{(20r)^3}$.

\vspace{1mm}
{\rm (i)} If $n_1-n_r \le \frac{n}{400}$, then by denoting $\phi=\max\{n_1-n_r,2(\alpha_1 +\alpha_2)\}$, we have
$$\Big| \rho(G)\!-\!\rho(K)\!-\!\frac{2(\alpha_1\!-\!\alpha_2)}{n} \Big| \le\frac{56(\alpha_1\!+\!\alpha_2)\phi}{n^2}. $$

{\rm (ii)} If $n_1-n_r\geq 2k$ for some integer $k\le \frac{n}{(20r)^3}$, then we have
\begin{align*}
\rho(G) &\leq
    \rho(T_{n,r})\!+\!\frac{2(\alpha_1\!-\!\alpha_2)}{n}\!-\!\frac{2(r\!-\!1)k^2}{rn} \cdot \Big(1\!-\!\frac{28r\psi}{n} \Big)^4
    \!+\!\frac{56(\alpha_1\!+\!\alpha_2)\cdot 7r \psi}{n^2},
    \end{align*}
where $\psi=\max\{3k,2(\alpha_1+\alpha_2)\}$.
\end{lem}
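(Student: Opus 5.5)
The plan is to compare Rayleigh quotients in both directions, using the Perron vector of $K$ for the lower bound and the Perron vector of $G$ for the upper bound. Part (ii) then needs a separate estimate of how much an unbalanced $K$ loses against $T_{n,r}$.

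\textbf{Step 1: the Perron vector of $K$.} Let $\mathbf{x}$ be the unit Perron vector of $K$. By symmetry $\mathbf{x}$ is constant on each class, say equal to $c_i$ on the $i$-th class. Write $S=\sum_j n_jc_j$ and $\lambda=\rho(K)$. The eigen-equation $\lambda c_i=S-n_ic_i$ gives
$$c_i=\frac{S}{\lambda+n_i}, \qquad \sum_i \frac{n_i}{\lambda+n_i}=1.$$
When $n_1-n_r\le n/400$ we have $\lambda=(1-\frac1r)n+O(n_1-n_r)$. Hence every $c_i$ equals $n^{-1/2}\bigl(1+O(\frac{n_1-n_r}{n})\bigr)$, and therefore $c_ic_j=\frac1n+O(\frac{\phi}{n^2})$ for all $i,j$, with explicit constants.

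\textbf{Step 2: lower bound in (i).} Expand
$$\rho(G)\ge \mathbf{x}^TA(G)\mathbf{x}=\lambda+2\sum_{\text{class-edges }uv}x_ux_v-2\sum_{\text{deleted }uv}x_ux_v.$$
Inserting the estimate $x_ux_v=\frac1n+O(\phi/n^2)$ from Step 1 gives
$$\rho(G)\ge\rho(K)+\frac{2(\alpha_1-\alpha_2)}{n}-O\Big(\frac{(\alpha_1+\alpha_2)\phi}{n^2}\Big).$$

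\textbf{Step 3: upper bound in (i).} Let $\mathbf{y}$ be the unit Perron vector of $G$. Then
$$\rho(G)=\mathbf{y}^TA(K)\mathbf{y}+2\sum_{\text{class}}y_uy_v-2\sum_{\text{del}}y_uy_v\le \lambda+2\sum_{\text{class}}y_uy_v-2\sum_{\text{del}}y_uy_v.$$
It therefore suffices to show $y_u=n^{-1/2}\bigl(1+O(\phi/n)\bigr)$ for every vertex $u$. Since $\alpha_1,\alpha_2\le n/(20r)^3$, every degree of $G$ differs from the corresponding degree in $K$ by at most $\alpha_1+\alpha_2$, and $\rho(G)=\lambda+O(\alpha/n)$ by a crude first estimate.

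To get the entry estimate, let $Y_i$ denote the sum of $\mathbf{y}$ over the $i$-th class. Write $\rho(G)y_u=\sum_{j\ne i}Y_j+E_u$, where $E_u$ collects the contributions of the modified edges at $u$.
\begin{itemize}
\item First bound $y_{\max}$. Apply the eigen-equation twice at a vertex attaining $y_{\max}$. The second application averages over about $(1-\frac1r)n$ neighbours, so the effect of the at most $\alpha_1+\alpha_2$ modified edges is diluted to a relative error $O(\alpha/n)$. This gives $y_{\max}\le(1+O(\phi/n))\,y_{\min}$.
\item Combine this with $\sum_u y_u^2=1$ to obtain the claimed uniform estimate of the entries.
\end{itemize}
Substituting into the two sums reproduces the matching upper bound. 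Tracking constants in both steps should give the factor $56$.

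\textbf{Step 4: part (ii).} First show that an unbalanced partition costs spectral radius. Let $f(\mu)=\sum_i\frac{n_i}{\mu+n_i}$, so that $\lambda$ is the root of $f(\mu)=1$. Move $k$ vertices from the largest class toward the smallest, and use convexity of $t\mapsto\frac{t}{\mu+t}$ together with a second-order expansion. This should yield
$$\rho(K)\le\rho(T_{n,r})-\frac{2(r-1)k^2}{rn}\Big(1-\frac{28r\psi}{n}\Big)^4.$$
The fourth power comes from bounding $\lambda+n_i$ between $(1-\frac{28r\psi}{n})$-multiples of $n$.

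Next repeat Step 3 with $\phi$ replaced by $\psi$. The class sizes may now differ by more than $n/400$. However, only the ratios $c_i/c_j$ and the entries of $\mathbf{y}$ enter the argument, and these remain within $1+O(r\psi/n)$ as long as $k\le n/(20r)^3$. This gives the error term $\frac{56(\alpha_1+\alpha_2)\cdot7r\psi}{n^2}$, and combining the two estimates proves (ii).

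\textbf{Expected obstacle.} The main difficulty is the uniform entry estimate for $\mathbf{y}$ in Step 3 with the sharp error $O(\phi/n)$ rather than $O(\sqrt{\alpha/n})$. This requires the two-step eigen-equation averaging, and the explicit constants have to be carried carefully in the unbalanced case of (ii).
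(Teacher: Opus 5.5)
The paper quotes this lemma from Fang et al.\ and gives no proof, so your argument can only be judged on its own. Part (i) is fine in outline. It is the standard two-sided Rayleigh-quotient comparison, and the entry estimate you single out as the main obstacle is routine. For $u$ in the $i$-th class, $\rho(G)y_u=\sum_{v\notin V_i}y_v+E_u$ with $|E_u|\le(\alpha_1+\alpha_2)y_{\max}$. After a first bootstrap giving $y_{\max}=O(y_{\min})$, the class values satisfy a perturbed form of $c_i=S/(\lambda+n_i)$, whence $y_u=n^{-1/2}(1+O(\phi/n))$.

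The genuine gap is in part (ii). Its hypothesis says only $D:=n_1-n_r\ge 2k$. Nothing bounds $D$ from above, and the assumption $n_1-n_r\le n/400$ on which Steps 1 and 3 rest is gone. Your assertion that the ratios $c_i/c_j$ and the entries of $\mathbf{y}$ stay within $1+O(r\psi/n)$ ``as long as $k\le n/(20r)^3$'' is false. Indeed $c_1/c_r=1-D/(\lambda+n_1)$ deviates from $1$ by order $D/n$, and by a constant factor when $D=\Theta(n)$.

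This is not just a lossy estimate: the decoupled scheme of Step 4 is itself false. That scheme bounds $\rho(K)$ by the loss $\frac{2(r-1)k^2}{rn}(\cdots)^4$ of the \emph{minimal} admissible imbalance, then bounds $\rho(G)-\rho(K)$ separately with an $O(\psi/n^2)$ error per modified edge. For a counterexample, take $r=2$, $k=1$, $\alpha_1=1$, $\alpha_2=0$ (so $\psi=3$), and $K=K_{a,b}$ with $a-b=D$. Put the class-edge inside the smaller class. The Perron vector of $K$ equals $(2b)^{-1/2}=(n-D)^{-1/2}$ on that class, so
$$\rho(G)-\rho(K)\ge\frac{2}{n-D}\ge\frac2n+\frac{2D}{n^2}.$$
Your claimed bound is $\frac{2(\alpha_1-\alpha_2)}{n}+\frac{56(\alpha_1+\alpha_2)\cdot 7r\psi}{n^2}=\frac2n+\frac{2352}{n^2}$, which is exceeded once $D>1176$, even well inside $D\le n/400$. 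The lemma survives only because a large $D$ makes $\rho(K)$ much smaller than the minimal-imbalance bound, and your splitting throws exactly this surplus away.

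To repair it, you must let the actual imbalance pay for the larger error. Split into three ranges.
\begin{itemize}
\item If $D\le C\psi$ for a suitable constant $C$, your Step 3 estimate does hold, with error $O\bigl((\alpha_1+\alpha_2)\psi/n^2\bigr)$.
\item If $C\psi<D\le n/400$, Step 3 gives only error $O\bigl((\alpha_1+\alpha_2)D/n^2\bigr)$. On the other hand, the true loss $\rho(T_{n,r})-\rho(K)$ is about $\frac{r-1}{rn}$ times the excess of $\sum_i n_i^2$ over its Tur\'an value, hence at least roughly $\frac{(r-1)D^2}{2rn}$. This exceeds $\frac{2(r-1)k^2}{rn}$ by an amount of order $D^2/n$, which absorbs the extra error since $\alpha_1+\alpha_2\le 2n/(20r)^3$.
\item If $D>n/400$, use that balancing a complete $r$-partite graph increases its spectral radius. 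This gives $\rho(T_{n,r})-\rho(K)\ge(1-o(1))\frac{(r-1)n}{320000\,r}$, far larger than $\frac{2(r-1)k^2}{rn}\le\frac{2(r-1)n}{r(20r)^6}$. This linear gap swamps the crude bound $\rho(G)\le\rho(K)+\sqrt{2\alpha_1}$.
\end{itemize}
Note also that the constant $\frac{2(r-1)}{r}$ is sharp. So the second-order expansion in Step 4 must be taken relative to the actual class sizes of $T_{n,r}$ when $r\nmid n$, not relative to $n/r$.
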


\begin{lem}\label{Lem2.4}\emph{(\cite{Li-N2023})}
Let $u$ be a vertex of a graph $G$.
Then $\rho^2(G)\leq \rho^2(G-\{u\})+2d_G(u).$
\end{lem}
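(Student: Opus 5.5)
Let $A$ be the adjacency matrix of $G$ and let $\mathbf{x}$ be a nonnegative unit Perron eigenvector, so $\rho=\rho(G)$ and $A\mathbf{x}=\rho\mathbf{x}$. The plan is to express $\rho^2=\mathbf{x}^{T}A^2\mathbf{x}$ and split $A^2$ into the part supported on $G-u$ and a correction involving $u$. Write $H=G-\{u\}$ and let $B$ be the adjacency matrix of $G$ with the row and column of $u$ set to zero. Then $B$ is $A(H)$ padded with a zero row and column, so $\lambda_{\max}(B^2)=\rho^2(H)$. Write $A=B+E$, where $E=\mathbf{e}_u\mathbf{a}^T+\mathbf{a}\mathbf{e}_u^T$ and $\mathbf{a}$ is the indicator vector of $N_G(u)$.

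Expanding the square gives $A^2=B^2+BE+EB+E^2$. We have $\mathbf{x}^TB^2\mathbf{x}\le \rho^2(H)$, so it remains to show that $\mathbf{x}^T(BE+EB+E^2)\mathbf{x}\le 2d_G(u)$.

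A cleaner route uses the entries of $A^2$ directly. We have $\rho^2 x_v=\sum_{w}(A^2)_{vw}x_w$, and $(A^2)_{vw}$ counts common neighbours of $v$ and $w$. Split the walks $v\!-\!z\!-\!w$ of length two according to whether $u\in\{v,z,w\}$. The walks avoiding $u$ are exactly the walks in $H$, so they contribute $\mathbf{x}_H^TA(H)^2\mathbf{x}_H\le\rho^2(H)\|\mathbf{x}_H\|^2\le \rho^2(H)$. The walks through $u$ contribute the following.

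\emph{Walks with $z=u$:} these contribute $\big(\sum_{v\in N(u)}x_v\big)^2=(\rho x_u)^2$.

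\emph{Walks with $v=u$ or $w=u$ (and $z\neq u$):} these contribute $2x_u\sum_{z\in N(u)}\sum_{w\in N(z),\,w\ne u}x_w$. The inner sum equals $\rho x_z-x_u$, so the total is $2x_u\sum_{z\in N(u)}(\rho x_z-x_u)\le 2\rho^2x_u^2$.

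Adding these gives $\rho^2\le\rho^2(H)+3\rho^2x_u^2$. This is not directly the desired bound, so I will instead bound these terms using Cauchy–Schwarz relative to $d=d_G(u)$, as follows.

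The walks with $z=u$ contribute $(\sum_{N(u)}x_v)^2\le d\sum_{N(u)}x_v^2\le d$, since $\mathbf{x}$ is a unit vector.

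For the walks starting or ending at $u$, the sum $x_u\sum_{z\in N(u)}\sum_{w\in N_H(z)}x_w$ is at most $x_u\sum_{w}x_w\,|N(w)\cap N(u)|$. We also have $x_u=\rho^{-1}\sum_{N(u)}x_v\le \rho^{-1}\sqrt d$.

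The obstacle is to make these two pieces total at most $2d$. The inequality $\rho^2x_u^2\le d$ handles the $z=u$ term. For the remaining term, I would instead bound $2x_u\sum_{z\in N(u)}\rho x_z=2\rho^2x_u^2\le 2d$ and then absorb the $z=u$ term via $-2d\,x_u^2$. If that bookkeeping fails, the fallback is to use the Rayleigh quotient with $\mathbf{x}$ restricted to $H$: we have $\rho^2(H)\ge \mathbf{y}^TA(H)^2\mathbf{y}/\|\mathbf{y}\|^2$ with $\mathbf{y}=\mathbf{x}|_{H}$ and $\|\mathbf{y}\|^2=1-x_u^2$. This yields $\rho^2(1-x_u^2)-(\text{walks through }u\text{ counted in }A^2\mathbf{x}\text{ at }H)\le\rho^2(H)(1-x_u^2)$. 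Computing $\|A\mathbf{x}\|^2$ restricted to $H$ as $\rho^2(1-x_u^2)$, and comparing it with $\|A(H)\mathbf{y}\|^2=\sum_{v\ne u}(\rho x_v-x_u\mathbb{1}_{v\sim u})^2$, gives
\[\rho^2(H)\ge \rho^2(1-x_u^2)-2\rho x_u\sum_{N(u)}x_v+d x_u^2=\rho^2-3\rho^2x_u^2+dx_u^2 .\]
Hence $\rho^2\le\rho^2(H)+(3\rho^2-d)x_u^2$. Since $\rho^2x_u^2\le d\sum_{N(u)}x_v^2\le d(1-x_u^2)$, we get $(3\rho^2-d)x_u^2\le 3d-dx_u^2-3dx_u^2\le 3d$.

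To sharpen this to $2d$, I would use the normalization $\|A(H)\mathbf{y}\|^2\le\rho^2(H)\|\mathbf{y}\|^2$ together with the bounds $\rho^2x_u^2\le d(1-x_u^2)$ and $\rho^2 x_u^2\le d$. The resulting quadratic inequality in $t=x_u^2$ then needs to be checked against $2d$, and this optimization is the main step to verify.
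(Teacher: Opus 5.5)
Your proposal does not yet prove the lemma. The only inequality you actually establish is $\rho^2(G)\le\rho^2(H)+3d$, with $H=G-\{u\}$ and $d=d_G(u)$. The improvement to $2d$ is left as an unverified ``optimization'', and that step is the whole content of the lemma.

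Moreover, the reduction you display, $\rho^2\le\rho^2(H)+(3\rho^2-d)x_u^2$, cannot give $2d$ even combined with $\rho^2x_u^2\le d(1-x_u^2)$. These two inequalities are consistent with $\rho^2-\rho^2(H)$ arbitrarily close to $3d$ when $\rho^2(H)$ is much larger than $d$. For example, $\rho^2(H)=100$, $d=1$, $x_u^2=d/(\rho^2+d)$ allows $\rho^2\approx 102.96$. The loss occurs where you replace $\rho^2(H)\|\mathbf{y}\|^2=\rho^2(H)(1-x_u^2)$ by $\rho^2(H)$. The discarded term $-\rho^2(H)x_u^2$ is what offsets the third copy of $\rho^2x_u^2$.

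The fix uses only ingredients you already have. Your computation gives $\|A(H)\mathbf{y}\|^2=\rho^2(1-x_u^2)-2\rho^2x_u^2+dx_u^2$, and $\|A(H)\mathbf{y}\|^2\le\rho^2(H)(1-x_u^2)$. Keep the factor $1-x_u^2$ on both sides rather than moving $\rho^2x_u^2$ across:
\[
\bigl(\rho^2-\rho^2(H)\bigr)(1-x_u^2)\le 2\rho^2x_u^2-dx_u^2\le 2d(1-x_u^2)-dx_u^2 .
\]
The last step is your Cauchy--Schwarz bound $\rho^2x_u^2=\bigl(\sum_{v\in N_G(u)}x_v\bigr)^2\le d\sum_{v\in N_G(u)}x_v^2\le d(1-x_u^2)$.

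If $\rho=0$ there is nothing to prove. Otherwise $x_u<1$: if $x_u=1$ then all other coordinates vanish, so $\rho x_u=\sum_{v\in N_G(u)}x_v=0$. Dividing by $1-x_u^2>0$ gives
\[
\rho^2\le\rho^2(H)+2d-\frac{dx_u^2}{1-x_u^2}\le\rho^2(H)+2d .
\]
So only $2\rho^2x_u^2$, not $3\rho^2x_u^2$, has to be paid for by Cauchy--Schwarz. With this line added your argument is a complete, self-contained proof. The paper gives no proof of its own here; it quotes the lemma from Li and Ning, so your argument has to stand on its own.
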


Next, we conduct a detailed comparison of the spectral radius for several specific graphs,\
which are crucial to the proof of our main result.

\begin{lem}\label{lem4.2}
Let $n$ be sufficiently large, $|s|\leq 22$,
and $G\cong F_1(\lfloor\frac{n-11-s}{2}\rfloor,\lceil\frac{n-7+s}{2}\rceil)$.
Then $$\rho(G)\leq \rho(F_1(\lfloor\frac{n-11}{2}\rfloor,\lceil\frac{n-7}{2}\rceil)),$$
with equality if and only if $G=F_1(\lfloor\frac{n-11}{2}\rfloor,\lceil\frac{n-7}{2}\rceil)$.
\end{lem}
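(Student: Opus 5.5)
The plan is a local-move argument. With $a+b=n-9$ fixed, we show that moving one vertex between the two blown-up classes strictly increases $\rho$ whenever $a$ is not at the claimed optimum; iterating then gives the lemma. Throughout, write $G_{a,b}=F_1(a,b)$, where $X$ (the blow-up of $x$) has size $a$ and $Y$ (the blow-up of $y$) has size $b$. Note that $|V(G_{a,b})|=9+a+b$, so every graph in the lemma has $a+b=n-9$ and $b-a=2+s+\theta$ with $\theta\in\{0,\pm1\}$ a parity correction.

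First, set up the eigen-equations. Let $\mathbf{z}$ be the Perron vector of $G_{a,b}$ and $\rho=\rho(G_{a,b})$. By symmetry $\mathbf z$ is constant on $X$ (value $z_x$) and on $Y$ (value $z_y$). In $F_1$ we have $N(x)=\{y,w_{13},w_{23}\}$ and $N(y)=\{x,v_{13},v_{23},v_1,v_2\}$. Hence
$$\rho z_x=b z_y+z_{w_{13}}+z_{w_{23}},\qquad \rho z_y=a z_x+z_{v_{13}}+z_{v_{23}}+z_{v_1}+z_{v_2}.$$
Writing the nine eigen-equations for the fixed vertices as well, and using $\rho=\Theta(n)$, we get $z_u=\Theta(z_x/\sqrt n)$ for the hub-type vertices. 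More precisely, each of $w_{13},w_{23}$ is adjacent to all of $X$, so $z_{w}\approx (a z_x)/\rho$; each of $v_{13},v_{23},v_1,v_2$ is adjacent to all of $Y$, so $z_v\approx (b z_y)/\rho$. The errors are of relative order $O(1/n)$.

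Second, compare the two graphs obtained by moving a single vertex. Let $G'=G_{a-1,b+1}$, obtained by moving one vertex $v\in X$ to $Y$. Its edge changes are:
\begin{itemize}
\item it loses the edges from $v$ to $Y\cup\{w_{13},w_{23}\}$;
\item it gains the edges from $v$ to $(X\setminus\{v\})\cup\{v_{13},v_{23},v_1,v_2\}$.
\end{itemize}
By the Rayleigh quotient,
$$\rho(G')-\rho\ \ge\ \frac{2z_v}{\mathbf z^T\mathbf z}\Big[(a-1)z_x+\textstyle\sum_{v_*}z_{v_*}-b z_y-z_{w_{13}}-z_{w_{23}}\Big]=\frac{2z_x}{\mathbf z^T\mathbf z}\big[\rho z_y-\rho z_x-z_x\big],$$
where the equality uses the two eigen-equations above. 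So $G'$ beats $G$ (weakly) as soon as $z_y>(1+1/\rho)z_x$, and symmetrically for moves in the other direction.

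Third, determine the sign of $z_y-z_x$ as a function of $b-a$. Subtracting the two eigen-equations gives
$$(\rho+1)(z_y-z_x)\approx (a-b)\cdot(\text{common value}) + \big(4z_v-2z_w\big),$$
and substituting $z_v\approx bz_y/\rho$ and $z_w\approx az_x/\rho$ with $a,b\approx n/2$, $\rho\approx n/2$, turns this into
$$z_y-z_x\approx \frac{z_x}{\rho}\,(a-b+2)+O(z_x/n^2).$$
Roughly, $Y$ has two more outside neighbours than $X$, so the balance point is $b-a\approx 2$. Thus moving a vertex from the larger side to the smaller side strictly increases $\rho$ until $|b-a-2|$ is at most about $1$. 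The strictness comes from the fact that if equality held in the Rayleigh bound, then $\mathbf z$ would be an eigenvector of $G'$; but this fails because the eigen-equation at $v_{13}$ changes. Iterating from any $|s|\le 22$ leaves only the few candidates with $b-a\in\{1,2,3\}$ (with $a+b=n-9$ fixed).

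The main obstacle is this last tie-breaking step among the two or three near-balanced candidates. There the first-order term $a-b+2$ vanishes or has size $1$, and it must dominate the $O(1/n)$ corrections. For this I would compute $z_y-z_x$ to second order: take all eleven class values of the equitable quotient matrix exactly, expand $\rho=\sqrt{ab}+c_0+O(1/n)$, and check the sign in each parity case of $n$. If the vector comparison remains ambiguous, I would instead compute the characteristic polynomial $\Phi_{a,b}(t)$ of the $11\times11$ quotient matrix. I would then show $\Phi_{a,b}(\rho^*)<0$ at $\rho^*=\rho(F_1(\lfloor\frac{n-11}{2}\rfloor,\lceil\frac{n-7}{2}\rceil))$ for the competing pair $(a,b)$ by evaluating the leading terms in $n$.
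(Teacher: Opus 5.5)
There is a genuine gap, and it sits exactly where the content of the lemma lies. Your Rayleigh identity $\rho(G')-\rho\ge\frac{2z_x}{\|\mathbf z\|^2}[\rho z_y-\rho z_x-z_x]$ is correct, but your third step has an arithmetic slip. Subtracting the two eigen-equations gives $(\rho+b)(z_y-z_x)=(a-b)z_x+\sum z_{v_*}-z_{w_{13}}-z_{w_{23}}$, not $(\rho+1)(z_y-z_x)$. Hence $\rho(z_y-z_x)\approx\frac{a-b+2}{2}\,z_x$, consistent with $z_y/z_x\approx\sqrt{(a+4)/(b+2)}$ for the underlying $K_{a+4,b+2}$. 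The bracket in your bound is therefore $\approx\frac{a-b}{2}z_x$ for a move $X\to Y$ and $\approx\frac{b-a-4}{2}z_x$ for a move $Y\to X$. So moves are certified only when $b-a\le -1$ or $b-a\ge 5$. At first order you are left with $b-a\in\{0,2,4\}$ for odd $n$ and $b-a\in\{1,3\}$ for even $n$, not with $\{1,2,3\}$.

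More seriously, your first proposed tie-breaker cannot work for even $n$. At $b-a=1$, the Rayleigh bound for moving one vertex from $X$ to $Y$ is about $\frac{2z_x}{\|\mathbf z\|^2}\bigl(-\tfrac12 z_x\bigr)\approx-\frac1n$. Using the old Perron vector as a test vector costs $\Theta(1/n)$, whereas the true gain $\rho(G_{a-1,b+1})-\rho(G_{a,b})$ is positive but only of order $n^{-3}$. No refinement of $z_y-z_x$ can detect that. You can see this in the paper's expansion of $g(x,t)=\det\bigl((x+\frac n2)I_\Pi-B_\Pi\bigr)$ with $t=b-a-2$: the coefficients of $n^{10},n^{9},n^{8}$ depend on $t$ only through $t^2$, and the first term odd in $t$ is $-112t\,n^7/2048$. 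So ``evaluating the leading terms in $n$'' does not separate $t=\pm1$. You must compute the exact characteristic polynomial of the $11\times 11$ quotient matrix and carry it to the fourth coefficient, and that is precisely the step your proposal leaves undone. Watch the sign as well: with the usual convention $\Phi_{a,b}(\lambda)=\det(\lambda I-B)$, the inequality $\Phi_{a,b}(\rho^*)<0$ would place a root of $\Phi_{a,b}$ above $\rho^*$, which is the opposite of what you want. The paper instead evaluates the optimum's polynomial $g(\cdot,t^*)$ at $\rho(G_{a,b})-\frac n2$ and shows it is negative. Finally, once that polynomial comparison is in place, it handles every $|t|\le 23$ at once through the $t^2n^9$ term, so your local-move reduction is unnecessary.
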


\begin{proof}
Let $G^\star$ be a graph that maximizes the spectral radius over all graphs in
  $$\Big\{F_1\big(\lfloor\frac{n-11-s}{2}\rfloor,\lceil\frac{n-7+s}{2}\rceil\big)~:~|s|\leq 22\Big\}.$$
Thus, $G^\star=F_1(\frac{n-11-t}{2},\frac{n-7+t}{2})$ for some
integer $t$ with $|t|\leq 23$.
Set $\rho = \rho (G^\star)$.
The proof is completed by showing that $t=0$ for odd $n$ and $t=1$ for even $n$.
In either case, this yields $G^\star=F_1(\lfloor\frac{n-11}{2}\rfloor,\lceil\frac{n-7}{2}\rceil)$.

\begin{claim}\label{claim3.12A}
We have $\frac{n-3}{2}-\frac{13.2}{n}\leq \rho\big(F_1(\lfloor\frac{n-11}{2}\rfloor,\lceil\frac{n-7}{2}\rceil)\big)
\leq \rho\leq \frac{n}{2}$.
\end{claim}

\begin{proof}
Since $G^\star=F_1(\frac{n-11-t}{2},\frac{n-7+t}{2})$ is clearly an $n$-vertex triangle-free graph,
Theorem \ref{Theorem1.2} implies that $\rho\leq \rho(T_{n,2})\leq \frac{n}{2}$.

It remains to show the lower bound $\rho\geq \frac{n-3}{2}-\frac{13.2}{n}$.
To this end, let $F'=F_1(\lfloor\frac{n-11}{2}\rfloor,\lceil\frac{n-7}{2}\rceil)-\{u_1,u_2,u_3\}$ (see $F_1$ in Figure \ref{fig-1.1A}).
Then, $F'$ can be obtained from $T_{n-3,2}$ by deleting 6 edges.
Applying  Lemma \ref{Lem2.3} with  $\alpha_1=0$ and $\alpha_2=6$, we obtain
$| \rho(F')-\rho(T_{n-3,2})+\frac{12}{n}| \leq\frac{0.1}{n}$.
It follows that
\begin{align*}
\rho(F')\geq \rho(T_{n-3,2})-\frac{12.1}{n}\geq \sqrt{\big\lfloor\frac{(n-3)^2}{4}\big\rfloor}-\frac{12.1}{n}\geq \frac{n-3}{2}-\frac{13.2}{n}.
\end{align*}
Since $F'$ is a subgraph of $ G^\star$, we have $\rho\geq \rho\big(F_1(\lfloor\frac{n-11}{2}\rfloor,\lceil\frac{n-7}{2}\rceil)\big)\geq \frac{n-3}{2}-\frac{13.2}{n}$, as required.
\end{proof}

Let $\mathbf{x}=(x_1,\ldots,x_n)^{\mathrm{T}}$ be the positive unit eigenvector of $G^\star$.
Clearly, $G^\star$ is obtained from $F_1$ by replacing $x$ and $y$ with independent sets $A_x$ and $A_y$
of sizes $\frac{n-11-t}{2}$ and $\frac{n-7+t}{2}$, respectively,
such that two vertices of $F_1(\frac{n-11-t}{2},\frac{n-7+t}{2})$ are adjacent if and only if their corresponding original vertices in $F_1$ are adjacent.
We partition the vertex set of $G^\star$ as $\Pi$:
{\small \begin{align*}
V(G^\star)=\{v_{13}\}\cup \{v_{23}\} \cup \{v_{2}\}\cup A_x\cup \{u_{1}\}\cup \{u_{2}\}
  \cup \{u_{3}\}\cup \{w_{13}\}\cup \{w_{23}\}\cup \{w_{2}\}\cup A_y.
\end{align*}}
Thus, we see that $\rho$ is the largest eigenvalue of

\begingroup
\renewcommand{\arraystretch}{1.5} % 增加行高，防止分数重叠
\setlength{\arraycolsep}{7pt}    % 减小列间距
\footnotesize                    % 缩小字号
\[
B_{\Pi} = \begin{bmatrix}
0 & 0 & 0 & 0 & 0 & 1 & 0 & 1 & 0 & 0 & \frac{n-7+t}{2} \\
0 & 0 & 0 & 0 & 0 & 0 & 1 & 1 & 0 & 0 & \frac{n-7+t}{2} \\
0 & 0 & 0 & 0 & 0 & 1 & 0 & 0 & 0 & 1 & \frac{n-7+t}{2} \\
0 & 0 & 0 & 0 & 0 & 0 & 1 & 0 & 1 & 0 & \frac{n-7+t}{2} \\
0 & 0 & 0 & 0 & 0 & 0 & 0 & 0 & 1 & 1 & \frac{n-7+t}{2} \\
1 & 0 & 1 & 0 & 0 & 0 & 1 & 0 & 1 & 0 & 0 \\
0 & 1 & 0 & 1 & 0 & 1 & 0 & 0 & 0 & 1 & 0 \\
1 & 1 & 0 & 0 & 0 & 0 & 0 & 0 & 1 & 1 & 0 \\
0 & 0 & 0 & 1 & \frac{n-11-t}{2} & 1 & 0 & 1 & 0 & 0 & 0 \\
0 & 0 & 1 & 0 & \frac{n-11-t}{2} & 0 & 1 & 1 & 0 & 0 & 0 \\
1 & 1 & 1 & 1 & \frac{n-11-t}{2} & 0 & 0 & 0 & 0 & 0 & 0
\end{bmatrix}.
\]
\endgroup
That is, $\rho$ is the largest root of $\det(xI_{\Pi}-B_{\Pi})$.
Equivalently, $\rho-\frac{n}{2}$ is the largest root of $g(x,t):=\det\big((x+\frac{n}{2})I_{\Pi}-B_{\Pi}\big)$,
which yields that
\begin{align}\label{align-A001}
g(\rho-\frac{n}{2},t)=0.
\end{align}

Claim \ref{claim3.12A} gives that $\rho-\frac{n}{2}\in [-2,0]$.
To estimate the range of $\rho$, it is enough to consider the function $g(x,t)$ over the internal $x\in [-2,0]$.
Given $|t|\leq 23$, direct computation yields
\begin{align*}
g(x,t)&=\frac{(6+4x)n^{10}}{2048}+\frac{(39+t^2+108x+76x^2)n^9}{2048}\\
&+\frac{(-904+430x+864x^2+648x^3+18t^2x)n^8}{2048}\\
&+\frac{(2900-68t^2-112t-12400x+1536x^2+4032x^3+3264x^4+144t^2x^2)n^7}{2048}+o(n^7).
\end{align*}
We now divide the proof into two cases depending on the parity of $n$.

\noindent{\textbf{Case 1.} $n$ is odd.}

Let $\rho_1=\rho(F_1(\frac{n-11}{2},\frac{n-7}{2}))$.
Then $\rho_1-\frac{n}{2}$ is the largest root of
\begin{align*}
g_1(x):=g(x,0)&=\frac{(6+4x)n^{10}}{2048}+\frac{(39+108x+76x^2)n^9}{2048}
+o(n^9).
\end{align*}

Suppose that $t\neq 0$.
We can check that
$g(x,t)-g_1(x)=\frac{t^2}{2048}n^9+o(n^9)>0$ for every $x\in [-2,0]$.
Combined with \eqref{align-A001} and Claim \ref{claim3.12A},
$$g_1(\rho-\frac{n}{2})< g(\rho-\frac{n}{2},t) = 0=g_1(\rho_1-\frac{n}{2}).$$
Then $\rho_1>\rho$, which contradicts the choice of $G^\star$.
Therefore, $t=0$ and $G^\star=F_1(\frac{n-11}{2},\frac{n-7}{2})$.

\noindent{\textbf{Case 2.} $n$ is even.}

Let $\rho_2= \rho (F_1(\frac{n-12}{2},\frac{n-6}{2}))$.
Then $\rho_2-\frac{n}{2}$ is the largest root of
\begin{align*}
g_2(x) &:=g(x,1)=\frac{(6+4x)n^{10}}{2048}+\frac{(40+\!108x+76x^2)n^9}{2048}\\
&+\frac{(-904+448x+\!864x^2+648x^3)n^8}{2048}\\
&+\frac{(2900-68-112-12400x+1536x^2+4032x^3+3264x^4+144t^2x^2)n^7}{2048}+o(n^7).
\end{align*}

Suppose that $t\neq 1$.
For $x\in [-2,0]$,
if $t=-1$, then
$g(x,t)-g_2(x)\geq \frac{224}{2048}n^7+o(n^7)>0$;
if $t\notin \{-1,1\}$, then
$g(x,t)-g_2(x)\geq \frac{t^2-1}{2048}n^9+o(n^9)>0$.
Combining both cases with \eqref{align-A001} yields
$$g_2(\rho-\frac{n}{2})< g(\rho-\frac{n}{2},t) = 0=g_2(\rho_2-\frac{n}{2}).$$
Then $\rho_2>\rho$, which contradicts the choice of $G^\star$.
Therefore, $t=1$ and $G^\star=F_1(\frac{n-12}{2},\frac{n-6}{2})$.

This completes the proof of Lemma \ref{lem4.2}.
\end{proof}

\section{Proof of Theorem \ref{thm1.1}}%\label{section4}

In this section,
we are ready to give the proof of Theorem \ref{thm1.1}.

\begin{proof}[\textbf{Proof of Theorem \ref{thm1.1}}]
Let $n$ be sufficiently large, and $G$ be an $n$-vertex triangle-free graph with chromatic number at least four
that maximizes the spectral radius.
Clearly, $G$ is connected.
Otherwise, let $G_1$ and $G_2$ be two components of $G$ such that $\rho(G_1)=\rho(G)$.
Adding an edge between $G_1$ and $G_2$ yields a new triangle-free graph with strictly larger spectral radius,
which leads to a contradiction.
The proof proceeds via a series of claims presented below.

\begin{claim}\label{Claim3.1}
We have $\rho(G)\geq \frac{n-3}{2}-\frac{13.2}{n}$ and $e(G)\geq \frac{(n-4)^2}{4}$.
\end{claim}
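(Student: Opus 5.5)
The spectral lower bound comes for free from the extremality of $G$. The graph $F_1\big(\lfloor\frac{n-11}{2}\rfloor,\lceil\frac{n-7}{2}\rceil\big)$ is an $n$-vertex graph. It is triangle-free, and it has chromatic number at least four because it contains $F_1$ as a subgraph. Hence it is a competitor in the extremal problem, and by the choice of $G$,
$$\rho(G)\geq \rho\Big(F_1\big(\lfloor\tfrac{n-11}{2}\rfloor,\lceil\tfrac{n-7}{2}\rceil\big)\Big)\geq \frac{n-3}{2}-\frac{13.2}{n}.$$
The last inequality is exactly Claim \ref{claim3.12A}. Its proof deletes $u_1,u_2,u_3$ to obtain $T_{n-3,2}$ minus $6$ edges and then applies Lemma \ref{Lem2.3}; that argument does not depend on the maximizing graph $G^\star$, so it can be quoted directly.

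For the edge bound, I will combine this spectral lower bound with an upper bound on $\rho$ in terms of edges that is sharp for triangle-free graphs. Nosal's theorem (or Nikiforov's refinement) says that a triangle-free graph satisfies $\rho(G)\leq\sqrt{e(G)}$. This gives
$$e(G)\geq \rho(G)^2\geq \Big(\frac{n-3}{2}-\frac{13.2}{n}\Big)^2 \geq \frac{(n-3)^2}{4}-\frac{13.2(n-3)}{n}.$$
For large $n$ the error term $\frac{13.2(n-3)}{n}$ is less than $13.2$. Since $\frac{(n-3)^2-(n-4)^2}{4}=\frac{2n-7}{4}$, which exceeds $13.2$ once $n\geq 30$, we get $e(G)\geq\frac{(n-4)^2}{4}$.

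If I want to avoid citing Nosal, which is not among the paper's stated lemmas, I can instead iterate Lemma \ref{Lem2.4}, but that route is clumsier. The cleanest alternative is the standard walk-counting argument for triangle-free graphs. Let $\mathbf{x}$ be the Perron vector. Then $\rho^2 x_u=\sum_{v\sim u}\sum_{w\sim v}x_w$. Because the graph is triangle-free, no $w$ in this double sum is a neighbour of $u$, and this yields $\rho^2\leq e(G)$. I will simply state this as the known bound $\rho\le\sqrt{m}$ for triangle-free graphs.

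The only mild obstacle is making sure the constant $13.2$ does not spoil the quadratic margin. As computed above, the margin is linear in $n$ while the loss is bounded, so "sufficiently large $n$" absorbs it easily.
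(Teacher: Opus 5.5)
Your proposal is correct and follows the paper's proof: the lower bound on $\rho(G)$ comes from extremality of $G$ combined with the estimate of Claim~\ref{claim3.12A} for $F_1\big(\lfloor\frac{n-11}{2}\rfloor,\lceil\frac{n-7}{2}\rceil\big)$, which does not depend on $G^\star$, and the edge bound comes from Nosal's inequality $\rho\le\sqrt{e(G)}$ for triangle-free graphs. Your explicit check that the bounded loss of about $13.2$ is absorbed by the linear margin $\frac{2n-7}{4}$ fills in arithmetic the paper leaves implicit.
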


\begin{proof}
Claim \ref{claim3.12A} gives that
$\rho\big(F_1(\lfloor\frac{n-11}{2}\rfloor,\lceil\frac{n-7}{2}\rceil)\big)\geq \frac{n-3}{2}-\frac{13.2}{n}$.
By the extremality of $G$, it follows that
$\rho(G)\geq \rho_1(F(\lfloor\frac{n-11}{2}\rfloor,\lceil\frac{n-7}{2}\rceil))$.
Thus, $\rho(G)\geq \frac{n-3}{2}-\frac{13.2}{n}$.

A result due to Nosal \cite{Nosal1970} states that $\rho(H)\leq \sqrt{e(H)}$ for every triangle-free graph $H$.
Applying this to $G$, we obtain
$e(G)\geq \rho^2(G)\geq \frac{(n-4)^2}{4}$.
\end{proof}

Let $\varepsilon$ be a positive constant, and let $\delta_{\ref{Lem2.1}}$ and $c_{\ref{Lem2.1}}$ be the constants provided by Lemma \ref{Lem2.1},
chosen such that
\begin{align}\label{Equ001}
\varepsilon\leq \varepsilon_{\ref{Lem2.1}}^{\frac{1}{4}}+c_{\ref{Lem2.1}}^{\frac{1}{8r+8}}\leq \frac{1}{10^4}.
\end{align}

\begin{claim}\label{Claim3.2}
Let $V_1\cup V_2$ be a vertex partition of $G$ that maximizes $e(V_1,V_2)$.
Then $e(V_1)+e(V_2)\leq \varepsilon n^2$ and $\big||V_i|-\frac{n}{2}\big|\leq 2\varepsilon^{\frac12} n$ for each $i\in \{1,2\}.$
\end{claim}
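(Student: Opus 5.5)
We will derive the claim from the spectral stability lemma (Lemma \ref{Lem2.1}) with $r=2$, followed by a standard counting argument.

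By Claim \ref{Claim3.1}, $\rho(G)\geq \frac{n-3}{2}-\frac{13.2}{n}>(\frac12-\varepsilon_{\ref{Lem2.1}})n$ for $n$ large, so Lemma \ref{Lem2.1} applies. Alternative (i) cannot occur. It would give a copy of $K_3(\lfloor c\ln n\rfloor,\lfloor c\ln n\rfloor,\lceil n^{1-\sqrt c}\rceil)$, and since $c\ln n\geq 1$ for $n$ large, this graph contains a triangle, whereas $G$ is triangle-free. Hence (ii) holds: $G$ differs from $T_{n,2}$ in fewer than $(\varepsilon_{\ref{Lem2.1}}^{1/4}+c_{\ref{Lem2.1}}^{1/(8r+8)})n^2\leq \varepsilon n^2$ edges, using \eqref{Equ001}. (The choice in \eqref{Equ001} should be read so that $\varepsilon$ dominates the stability error, i.e.\ the error is at most $\varepsilon$; this is how I would fix the constants.)

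Next, let $U_1\cup U_2$ be the bipartition of $V(G)$ coming from the copy of $T_{n,2}$. Since at most $\varepsilon n^2$ edges have been added, $e(U_1)+e(U_2)\leq \varepsilon n^2$. Now take a partition $V_1\cup V_2$ that maximizes $e(V_1,V_2)$. Then $e(V_1,V_2)\geq e(U_1,U_2)$, so
$$e(V_1)+e(V_2)=e(G)-e(V_1,V_2)\leq e(G)-e(U_1,U_2)=e(U_1)+e(U_2)\leq \varepsilon n^2.$$
This proves the first assertion.

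For the part sizes, write $|V_1|=\frac n2+a$ and $|V_2|=\frac n2-a$. Then $e(V_1,V_2)\leq |V_1||V_2|=\frac{n^2}{4}-a^2$. Combining this with Claim \ref{Claim3.1} gives
$$\frac{(n-4)^2}{4}\leq e(G)\leq \frac{n^2}{4}-a^2+\varepsilon n^2,$$
so $a^2\leq \varepsilon n^2+2n-4\leq 4\varepsilon n^2$ for $n$ large. Hence $|a|\leq 2\varepsilon^{1/2}n$, which is the second assertion.

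The only delicate step is the constant bookkeeping. We need to check that the hypotheses of Lemma \ref{Lem2.1} ($\frac1{\ln n}<c$, $\varepsilon_{\ref{Lem2.1}}<2^{-36}r^{-24}$) are compatible with \eqref{Equ001}, and that the stability error is bounded by $\varepsilon n^2$. Everything else is routine.
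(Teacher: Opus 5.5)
Your proof is correct and follows the paper's argument essentially step for step: apply Lemma~\ref{Lem2.1} with $r=2$ to get closeness to $T_{n,2}$, compare the max-cut partition with the bipartition of that nearby $T_{n,2}$, and then use $e(G)\ge \frac{(n-4)^2}{4}$ from Claim~\ref{Claim3.1} to bound $\bigl||V_i|-\frac n2\bigr|$. You also add two correct details the paper leaves implicit: alternative (i) is excluded because $c\ln n>1$ forces a triangle, and the inequality $\varepsilon\le \varepsilon_{\ref{Lem2.1}}^{1/4}+c_{\ref{Lem2.1}}^{1/(8r+8)}$ in \eqref{Equ001} points the wrong way as printed and should be read as you read it, which is consistent with $\varepsilon\le 10^{-4}$ because for $r=2$ the stability error is below $2^{-15}+2^{-23}$.
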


\begin{proof}
By Claim \ref{Claim3.1}, we have  $\rho(G)\geq \frac{n-4}{2}\geq (\frac12-\varepsilon)n$.
Since $G$ is triangle-free,
it follows from \eqref{Equ001} and
 Lemma \ref{Lem2.1} that $G$ can be obtained from $T_{n,2}$
by adding and deleting at most $\varepsilon n^2$ edges.
Then there exists a partition $V(G)=U_1\cup U_2$ such that
$\lfloor\frac{n}{2}\rfloor\le |U_1|\leq|U_2|\leq \lceil\frac{n}{2}\rceil$ and $e(U_1)+e(U_2)\leq \varepsilon n^2$.
Since $e(V_1,V_2)$ is maximum, it follows that $e(V_1)+e(V_2)$ is minimum, and hence
\begin{center}
$e(V_1)+e(V_2)\leq e(U_1)+e(U_2)\leq \varepsilon n^2.$
\end{center}
Set $\alpha=|V_1|-\frac{n}{2}$. Then
\begin{center}
  $e(G)= e(V_1,V_2)+e(V_1)+e(V_2)\leq \frac{n^2}{4}-\alpha^2+\varepsilon n^2.$
\end{center}
Combining with $e(G)\geq \frac{(n-4)^2}{4}$,
we get $\alpha^2\leq \varepsilon n^2+\frac{9n}{4}$,
and so $|\alpha|\leq 2\varepsilon^{\frac12} n$.
\end{proof}

\begin{claim}\label{Claim3.3}
Let $S=\{v\in V(G)~:~d_G(v)\leq \frac{2}{5}n\}$.
Then $|S|\leq 20$.
\end{claim}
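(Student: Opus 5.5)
I will argue by contradiction, combining the vertex-deletion inequality of Lemma \ref{Lem2.4} with the spectral lower bound of Claim \ref{Claim3.1} and the upper bound $\rho(H)\le \sqrt{e(H)}$ for triangle-free $H$ (Nosal), exactly in the spirit of the edge-counting in Claim \ref{Claim3.2}.

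Suppose $|S|\ge 21$ and fix a set $S'\subseteq S$ with $|S'|=21$. Let $G'=G-S'$, which has $n-21$ vertices and is triangle-free. Applying Lemma \ref{Lem2.4} repeatedly, once for each vertex of $S'$ (deleting them one at a time), gives
\begin{align*}
\rho^2(G)\le \rho^2(G')+2\sum_{v\in S'} d_G(v)\le \rho^2(G')+2\cdot 21\cdot \frac{2n}{5}=\rho^2(G')+\frac{84n}{5}.
\end{align*}
At each step the degree in the current graph is at most $d_G(v)\le \frac{2}{5}n$, so the bound is valid. Since $G'$ is triangle-free on $n-21$ vertices, Nosal's bound together with Mantel's theorem gives $\rho^2(G')\le e(G')\le \frac{(n-21)^2}{4}$.

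On the other hand, Claim \ref{Claim3.1} gives
\begin{align*}
\rho^2(G)\ge \Bigl(\frac{n-3}{2}-\frac{13.2}{n}\Bigr)^2\ge \frac{(n-3)^2}{4}-14.
\end{align*}
Combining the two estimates yields
\begin{align*}
\frac{(n-3)^2}{4}-14\le \frac{(n-21)^2}{4}+\frac{84n}{5}.
\end{align*}
The left side minus the first term on the right equals $\frac{(18)(2n-24)}{4}-14=9n-122$. So we would need $9n-122\le 16.8n$, which is true, and no contradiction arises. The naive count is therefore too weak, and this is the main obstacle: the coefficient $2\cdot\frac{2}{5}=0.8$ per deleted vertex exceeds the $\frac12$ per vertex gained in $\frac{(n-k)^2}{4}$.

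To fix this, I would instead count edges directly. We have $e(G)\ge \frac{(n-4)^2}{4}$, and deleting $S'$ removes at most $21\cdot\frac{2n}{5}=8.4n$ edges. Hence
\begin{align*}
e(G')\ge \frac{(n-4)^2}{4}-8.4n,
\end{align*}
while Mantel gives $e(G')\le \frac{(n-21)^2}{4}$. Their difference is $\frac{17(2n-25)}{4}-8.4n\approx 8.5n-8.4n-106$, which is positive for large $n$. This gives a contradiction with a slim margin, and that margin is exactly why the threshold is $20$.

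The plan therefore has two steps. First, rederive $e(G)\ge\frac{(n-4)^2}{4}$ from Claim \ref{Claim3.1}, using the sharper form $e(G)\ge \rho^2(G)\ge \frac{(n-3)^2}{4}-14$, which leaves more room. Second, delete $21$ low-degree vertices and compare the result with Mantel's bound on $n-21$ vertices. Using the sharper bound $\frac{(n-3)^2}{4}-14$, the difference becomes $9n-8.4n-O(1)>0$, so the contradiction is comfortable for sufficiently large $n$. The only care needed is to check these constants; the spectral Lemma \ref{Lem2.4} is not actually required.
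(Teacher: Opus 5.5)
Your final argument is correct and is essentially the paper's proof: delete $21$ vertices of $S$, lose at most $21\cdot\frac{2n}{5}$ edges, and compare with Mantel's bound $\frac{(n-21)^2}{4}$. You were also right to discard the Lemma~\ref{Lem2.4} attempt. Your version is in fact more careful than the paper's, which deletes only $20$ vertices: since $\frac{(n-4)^2}{4}-8n=\frac{(n-20)^2}{4}-96$, the paper's displayed strict inequality fails, whereas your choice of $21$ vertices (or, alternatively, your sharper bound $e(G)\ge\frac{(n-3)^2}{4}-14$) supplies the needed linear margin.
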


\begin{proof}
By way of contradiction, assume that $|S|>20$.
Then there exists a subset $S'\subseteq S$ with $|S'|=20$.
Combining with $e(G)\geq \frac{(n-4)^2}{4}$ (see Claim \ref{Claim3.1}),  we get
\begin{equation}\label{Ali-001}
e(G-S')\geq e(G)-\sum_{v\in S'}d_G(v)
\geq \frac{(n-4)^2}{4}-20\cdot \frac{2}{5}n
>\frac{(n-20)^2}{4}.
\end{equation}

As $G$ is triangle-free,  the subgraph $G-S'$ must also be triangle-free.
Tur\'{a}n's theorem then implies $e(G-S')\leq e(T_{n-20,2})$,
which contradicts \eqref{Ali-001}.
We conclude that $|S|\leq 20$.
\end{proof}

\begin{claim}\label{claim3.4}
For every $i\in \{1,2\}$, we define $V_i'=V_i\setminus S$.
Then $G[V_i']$ is empty.
\end{claim}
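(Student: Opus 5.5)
Suppose for contradiction that some $V_i'$, say $V_1'$, contains an edge $uv$. The aim is to show that $u$ and $v$ share a neighbour, which contradicts triangle-freeness.

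\textbf{Step 1.} First we bound how many neighbours a vertex of $V_1$ can have inside $V_1$. Because $V_1\cup V_2$ maximizes $e(V_1,V_2)$, every vertex $w\in V_1$ satisfies $d_{V_1}(w)\le d_{V_2}(w)$; otherwise moving $w$ to $V_2$ would increase $e(V_1,V_2)$. The same holds with the roles of $V_1$ and $V_2$ exchanged.

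\textbf{Step 2.} Next we show that most vertices of $V_1$ have almost all of $V_2$ as neighbours. By Claim \ref{Claim3.2},
\[
e(V_1,V_2)\ge e(G)-\varepsilon n^2\ge \frac{(n-4)^2}{4}-\varepsilon n^2 .
\]
Also $|V_1||V_2|\le n^2/4$. Hence the number of non-edges between $V_1$ and $V_2$ is at most $2\varepsilon n^2$.

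\textbf{Step 3.} Now take $u\in V_1'$. Since $u\notin S$, we have $d_G(u)>\frac25 n$. By Step 1, $d_{V_2}(u)\ge \frac12 d_G(u)>\frac15 n$. The same bound holds for $v$, so
\[
d_{V_2}(u)>\frac n5, \qquad d_{V_2}(v)>\frac n5 .
\]
This alone does not force $N_{V_2}(u)\cap N_{V_2}(v)\neq\emptyset$, because $|V_2|\approx n/2$. We need a sharper lower bound on the degrees into $V_2$.

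\textbf{Step 4.} To get it, we use triangle-freeness twice. The neighbourhood $N(u)$ is an independent set, so $N_{V_2}(u)$ and $N_{V_2}(v)$ are disjoint; otherwise we have a triangle. Consequently every vertex of $V_2$ misses at least one of $u,v$. This gives
\[
|V_2|-d_{V_2}(u)+|V_2|-d_{V_2}(v)\ge |V_2| .
\]
So $u$ and $v$ together account for at least $|V_2|\approx n/2$ non-edges into $V_2$.

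\textbf{Step 5.} A single edge costs about $n/2$ non-edges, which does not yet beat the $2\varepsilon n^2$ budget from Step 2. So we exploit the fact that $N(u)$ is independent and large. The set $N_{V_2}(u)$ has size greater than $n/5$ and is independent. Write $A=N_{V_2}(u)$ and $B=N_{V_2}(v)$, so $|A|,|B|>n/5$. The crossing set from $V_1$ to $A\cup B$ must then be sparse: any $w\in V_1$ adjacent to both some $a\in A$ and some $b\in B$ is fine, so triangles do not immediately forbid this.

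\textbf{Step 6.} A cleaner route is to compare $e(G)$ with the Tur\'an bound. The edge $uv$ together with the disjoint neighbourhoods means
\[
d(u)+d(v)\le n .
\]
More globally, the edges of $G[V_1]$ and $G[V_2]$ force $e(V_1,V_2)\le |V_1||V_2|-\Omega(n)$ for each such edge. This suggests counting: each edge inside $V_1'$ creates about $n/2-O(\varepsilon^{1/2} n)$ missing cross pairs at its endpoints.

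\textbf{Step 7.} Finally we turn this counting into the bound. Let $M$ be a maximum matching of $G[V_1']\cup G[V_2']$. Then
\[
e(G)\le \frac{n^2}{4}-|M|\Big(\frac n2-3\varepsilon^{1/2}n\Big)+e(V_1)+e(V_2).
\]
We also control $e(V_1)+e(V_2)$ in terms of $M$ and the maximum inner degrees. This is the main obstacle. The inner degrees are at most $d_{V_2}$, but we need them to be $o(n)$, and showing that takes its own triangle argument.

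If the counting proves too weak, the fallback is spectral. We would switch to the Perron vector: all vertices outside $S$ have eigenvector entries close to $1/\sqrt n$. Then we delete the edge $uv$, reinsert edges appropriately, and compare via $\rho\,x_u=\sum_{w\sim u} x_w$. With $\rho\ge \frac{n-3}{2}-o(1)$, this gives $d(u)\ge n/2-O(1)$ for $u\notin S$, since the low-degree set is bounded. That degree bound, combined with $d(u)+d(v)\le n$ and $d_{V_2}(u),d_{V_2}(v)\ge n/2-O(\varepsilon^{1/2} n)$, contradicts $|V_2|\le n/2+2\varepsilon^{1/2}n$.

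The key step, and the one I expect to be hardest, is this eigenvector-based lower bound $d(u)\ge(\frac12-O(\varepsilon^{1/2}))n$ for $u\notin S$.
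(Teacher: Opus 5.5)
There is a genuine gap. The proposal never reaches a contradiction. The difficulty you name at the end of Step 7 is exactly the point that must be settled: an endpoint $u$ of the edge may have large inner degree $d_{V_1}(u)$, so the max-cut property only gives $d_{V_2}(u)\ge \frac12 d_G(u)>\frac n5$. Neither of your routes settles it.

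The counting in Steps 5--7 is inherently too weak. By Step 4 an inner edge costs about $|V_2|\approx\frac n2$ missing cross pairs. But $e(G)\ge\frac{(n-4)^2}{4}$ leaves a deficit budget of about $2n+e(V_1)+e(V_2)$. So a global count can at best limit the number of inner edges; it cannot rule out a single one. The condition $d_G(u)>\frac25 n$ has to be used locally.

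The spectral fallback assumes, without proof, that every vertex outside $S$ has Perron entry close to $1/\sqrt n$ and hence degree about $\frac n2$. Nothing established before this claim gives that. The usual way to prove it, rewiring a vertex with small entry to $N(u^*)$, need not preserve $\chi\ge 4$; the paper only does such switching later, once $G$ is known to be a bounded blow-up. Moreover, even granting $d_G(u),d_G(v)\ge(\frac12-o(1))n$, you cannot conclude $d_{V_2}(u),d_{V_2}(v)\ge\frac n2-O(\varepsilon^{1/2}n)$ unless $d_{V_1}(u),d_{V_1}(v)$ are small. Max-cut gives only about $\frac n4$ each, and two disjoint sets of that size fit inside $V_2$. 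So the same obstacle reappears.

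The missing idea is an elementary dichotomy on inner degree.
\begin{itemize}
\item Put $W_1=\{w\in V_1: d_{V_1}(w)\ge 2\varepsilon^{1/2}n\}$. From $e(V_1)\le\varepsilon n^2$ you get $|W_1|\le\varepsilon^{1/2}n$.
\item Any $w\in V_1\setminus(W_1\cup S)$ has $d_{V_2}(w)\ge d_G(w)-d_{V_1}(w)\ge(\frac25-2\varepsilon^{1/2})n$.
\item If an endpoint $u$ of the edge lies in $W_1$, then $d_{V_1}(u)>|W_1\cup S|$, so $u$ has an inner neighbour $w\notin W_1\cup S$. Then $d_{V_2}(u)+d_{V_2}(w)>(\frac15+\frac25-2\varepsilon^{1/2})n>|V_2|$ by Claim \ref{Claim3.2}. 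This gives a common neighbour in $V_2$ and hence a triangle.
\item So both endpoints avoid $W_1$. Then $(\frac25+\frac25-4\varepsilon^{1/2})n>|V_2|$ gives the triangle directly.
\end{itemize}
This is the paper's proof.

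Your own Step 2 could replace $W_1$. At most $2\varepsilon^{1/2}n$ vertices of $V_1$ have more than $\varepsilon^{1/2}n$ non-neighbours in $V_2$. Hence a vertex of large inner degree has an inner neighbour adjacent to all but $\varepsilon^{1/2}n$ vertices of $V_2$, and that neighbour must meet $N_{V_2}(u)$. No eigenvector information is needed.
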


\begin{proof}
By way of contradiction.
Without loss of generality, we may assume that $G[V_1']$ contains an edge $u_1u_2$.
Since $u_1\notin S$, we have $d_G(u_1)\geq \frac{2}{5}n$.
Moreover, since $V(G)=V_1\cup V_2$ is a partition such that $e(V_1,V_2)$ is maximum,
we have $d_{V_2}(u_1)\geq \frac{1}{2}d_G(u_1)\geq \frac{1}{5}n$.

Set $W_1=\{v\in V_1~|~d_{V_1}(v)\geq 2\varepsilon^{\frac12} n\}$.
Clearly,
\begin{center}
  $2e(V_1)=\sum\limits_{v\in V_1}d_{V_1}(v)\ge
\sum\limits_{v\in W_1}d_{V_1}(v)\ge |W_1|\cdot 2\varepsilon^{\frac12} n.$
\end{center}
Combining this with Claim \ref{Claim3.2} gives
  $\varepsilon n^2\geq e(V_1)\geq |W_1| \varepsilon^{\frac12} n,$
which yields $|W_1|\leq\varepsilon^{\frac{1}{2}} n$.

We first prove that $u_1\notin W_1$.
Suppose for contradiction that $u_1\in W_1$.
By the definition of $W_1$, we know that $d_{V_1}(u_1)\geq 2\varepsilon^{\frac12} n>\varepsilon^{\frac12} n+20$.
It then follows from Claim \ref{Claim3.3} that $d_{V_1}(u_1)>|W_1\cup S|$.
Hence, there exists a vertex $u_3\in N_{V_1}(u_1)\setminus (W_1\cup S)$.
By the definitions of $W_1$ and $S$,
we obtain that $d_{V_1}(u_3)< 2\varepsilon^{\frac12} n$ and $d_G(u_3)\geq \frac{2}{5}n$,
which implies that
\begin{align}\label{Ali-002}
d_{V_2}(u_3)\geq d_G(u_3)-d_{V_1}(u_3)\geq \big(\frac{2}{5}-2\varepsilon^{\frac12}\big)n.
\end{align}
Combining this with $d_{V_2}(u_1)\geq \frac{1}{5}n$ gives that
$$|N_{V_2}(u_1)\cap N_{V_2}(u_3)|\geq d_{V_2}(u_1)+d_{V_2}(u_3)-|V_2|\geq \big(\frac{1}{10}-4\varepsilon^{\frac12}\big)n,$$
where the last inequality follows from Claim \ref{Claim3.2} that $\big||V_2|-\frac{n}{2}\big|\leq 2\varepsilon^{\frac12} n$.
Since $\varepsilon\leq \frac{1}{10^4}$,
 there exists a vertex $u_4\in N_{V_2}(u_1)\cap N_{V_2}(u_3)$,
which implies that $\{u_1,u_3,u_4\}$ induces a triangle, a contradiction.
Thus, $u_1\notin W_1$.
Similarly, we also get  $u_2\notin W_1$.

Since $u_1,u_2\notin W_1$,
a similar argument as in \eqref{Ali-002} shows that $d_{V_2}(u_i)\geq \big(\frac{2}{5}-4\varepsilon^{\frac12}\big)n$
for each $i\in \{1,2\}$.
Consequently,
$$|N_{V_2}(u_1)\cap N_{V_2}(u_2)|\geq d_{V_2}(u_1)+d_{V_2}(u_2)-|V_2|\geq \big(\frac{3}{10}-10\varepsilon^{\frac12}\big)n,$$
where the last inequality follows from Claim \ref{Claim3.2}.
Thus, there exists a vertex $u_5\in N_{V_2}(u_1)\cap N_{V_2}(u_2)$,
which implies that $\{u_1,u_2,u_5\}$ induces a triangle, a contradiction.

This completes the proof of Claim \ref{claim3.4}.
\end{proof}

For any $i\in \{1,2\}$ and any $S'\subseteq S$, we define $V_i(S')=\{v\in V_i'~:~N_S(v)=S'\}$.
Note that $V_i(S')$ may be empty for some $S'\subseteq S$.

\begin{claim}\label{claim3.5}
Let $S_1,S_2\subseteq S$ (possibly, $S_1=S_2$) such that both $V_1(S_1)$ and $V_2(S_2)$ are non-empty.

\vspace{1mm}
{\rm (i)}  If $S_1\cap S_2\neq \varnothing$, then $G[V_1(S_1),V_2(S_2)]$ is an empty graph.

\vspace{1mm}
{\rm (ii)}  If $S_1\cap S_2=\varnothing$, then $G[V_1(S_1),V_2(S_2)]$ is a complete bipartite graph.
\end{claim}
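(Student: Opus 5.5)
Part (i) is a direct triangle argument. Suppose $S_1\cap S_2\neq\varnothing$ and pick $w\in S_1\cap S_2$. Every vertex $v_1\in V_1(S_1)$ satisfies $N_S(v_1)=S_1\ni w$, and every $v_2\in V_2(S_2)$ satisfies $N_S(v_2)=S_2\ni w$. Hence both are adjacent to $w$. An edge $v_1v_2$ would then close the triangle $wv_1v_2$, which is impossible since $G$ is triangle-free. So $G[V_1(S_1),V_2(S_2)]$ is empty.

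Part (ii) is an edge-switching argument using the extremality of $\rho(G)$. Assume $S_1\cap S_2=\varnothing$, and suppose for contradiction that some $v_1\in V_1(S_1)$ and $v_2\in V_2(S_2)$ are non-adjacent. Let $G'=G+v_1v_2$. By the Perron--Frobenius theorem, and since $G$ is connected, $\rho(G')>\rho(G)$. Adding an edge cannot lower the chromatic number, so $\chi(G')\ge 4$. The contradiction will follow once we show $G'$ is triangle-free, i.e.\ that $v_1$ and $v_2$ have no common neighbour. We split a potential common neighbour $z$ by location.

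\begin{itemize}
\item If $z\in S$, then $z\in N_S(v_1)\cap N_S(v_2)=S_1\cap S_2=\varnothing$, which is impossible.
\item If $z\in V_1'$, then $z$ is adjacent to $v_1\in V_1'$. This contradicts Claim \ref{claim3.4}, which says $G[V_1']$ is empty.
\item If $z\in V_2'$, the same contradiction arises from $v_2\in V_2'$.
\end{itemize}

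Since $V(G)=S\cup V_1'\cup V_2'$, no common neighbour exists and $G'$ is triangle-free. This contradicts the maximality of $\rho(G)$, so every pair is adjacent and $G[V_1(S_1),V_2(S_2)]$ is complete bipartite.

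The only delicate point is remembering to use the extremal choice of $G$, together with the connectivity of $G$ noted at the start of the proof, so that adding an edge strictly increases $\rho$. Beyond that, the argument is a short case check.
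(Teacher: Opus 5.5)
Your proposal is correct and follows the paper's proof: part (i) uses the common neighbour in $S_1\cap S_2$ to form a triangle, and part (ii) adds the missing edge. Claim \ref{claim3.4} forces any common neighbour into $S_1\cap S_2=\varnothing$, so the new graph is triangle-free with $\chi\ge 4$ and strictly larger $\rho$; you also make explicit the role of connectivity, which the paper leaves implicit.
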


\begin{proof}
(i) Suppose, for contradiction, that there exists an edge $u_1u_2\in E(V_1(S_1),V_2(S_2))$,
where $u_i\in V_i(S_i)$ for $i=1,2$.
Since $S_1\cap S_2\neq \varnothing$, there exists a vertex $u_3\in S_1\cap S_2$.
Then $\{u_1,u_2,u_3\}$ induces a triangle, a contradiction.

(ii) Suppose, for contradiction, that there exist a vertex $u_1\in V_1(S_1)$ and a vertex $u_2\in V_2(S_2)$
such that $u_1$ and $u_2$ are non-adjacent in $G$.
Let $G'$ be the graph obtained from $G$ by adding the edge $u_1u_2$.
We first show that $G'$ is triangle-free.
Indeed, if not, then there exists a triangle contains the edge $u_1u_2$.
The remaining vertex in this triangle is denoted by $u_3$.
By Claim \ref{claim3.4}, $G[V_i']$ ($i=1,2$) is empty, which implies that $u_3\in S$.
Since $u_1u_3,u_2u_3\in E(G)$, it follows that $u_3\in S_1$ and $u_3\in S_2$,
which contradicts $S_1\cap S_2=\varnothing$.
Thus, $G'$ is triangle-free.
Nevertheless, $\rho(G')>\rho(G)$ and $\chi(G')\geq \chi(G)\geq 4$,
which contradicts the extremal choice of $G$.
\end{proof}

Recall that $G$ is connected.
Then by the Perron-Frobenius theorem,
there is a positive unit eigenvector
$\mathbf{x}=(x_1,\ldots,x_n)^{\mathrm{T}}$ corresponding to $\rho(G)$,
where $x_{u^*}=\max\{x_v: v\in V(G)\}$.

\begin{claim}\label{claim3.6}
We have $|V_1(\varnothing)|\geq (\frac{1}{2}-2\varepsilon^{\frac12})n$
and $|V_1(S')|\leq 1$ for any non-empty $S'\subseteq S$.
\end{claim}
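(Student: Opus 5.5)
We first prove the second assertion, $|V_1(S')|\le 1$ for every non-empty $S'\subseteq S$, by a twin-and-move argument. The first assertion then follows by counting.

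\textbf{Step 1: vertices of $V_1(S')$ are twins.} Fix $v\in V_1(S')$. By Claim \ref{claim3.4}, $v$ has no neighbour in $V_1'$. By Claim \ref{claim3.5}, $v$ is adjacent to all of $V_2(T)$ when $T\cap S'=\varnothing$ and to none of $V_2(T)$ when $T\cap S'\neq\varnothing$. Hence
$$N_G(v)=S'\cup \bigcup_{T\cap S'=\varnothing}V_2(T),$$
which depends only on $S'$. So all vertices of $V_1(S')$ have identical neighbourhoods and, being non-adjacent, are twins. In particular they carry equal entries of the Perron vector $\mathbf{x}$.

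\textbf{Step 2: moving a twin into $V_1(\varnothing)$.} Suppose $|V_1(S')|\ge 2$ with $S'\ne\varnothing$, and pick $v\ne v'$ in $V_1(S')$. Form $G'$ from $G$ by deleting all edges from $v$ to $S'$ and joining $v$ to every vertex of $V_2'$.

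$G'$ is triangle-free, since $V_2'$ is independent (Claim \ref{claim3.4}) and $v$ now has no neighbour in $S$. Moreover $\chi(G')\ge 4$. Indeed, $G-v$ is a subgraph of $G'$, and $\chi(G-v)=\chi(G)$ because $v$ has the twin $v'$ in $G-v$: any proper colouring of $G-v$ extends to $G$ by giving $v$ the colour of $v'$.

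By the Rayleigh quotient,
$$\rho(G')-\rho(G)\ \ge\ 2x_v\Big(\sum_{u\in V_2'\setminus N(v)}x_u-\sum_{w\in S'}x_w\Big).$$
So it suffices to show that the bracket is positive, or, if it is zero, that the Perron vector cannot be an eigenvector of $G'$. Either outcome contradicts the extremality of $G$.

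\textbf{Step 3: the main obstacle is the sign of this bracket.} There are two sub-cases.

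If $V_2'\setminus N(v)\neq\varnothing$, I would show that every $u\in V_2'$ has $x_u\ge(1-o(1))x_{u^*}$ while every $w\in S$ has $x_w\le(\frac45+o(1))x_{u^*}$. The upper bound is immediate from $\rho x_w\le d_G(w)x_{u^*}\le \frac25 n\,x_{u^*}$ together with $\rho\ge \frac{n-4}{2}$. The lower bound comes from $\rho x_u\ge\sum_{z\in V_1(\varnothing)}x_z$ and a symmetric estimate for the entries on $V_1(\varnothing)$. Even this comparison alone is not enough when $|S'|$ exceeds $|V_2'\setminus N(v)|$. In that case I would replace the single move by a cleverer one, for instance moving $v$ and simultaneously exploiting that every $w\in S'$ has no neighbour in $N_{V_2'}(v)$ (this would create a triangle). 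That fact forces $V_2'\setminus N(v)$ to be large unless the vertices of $S'$ have tiny eigenvector entries.

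If $V_2'\setminus N(v)=\varnothing$, then each $w\in S'$ has no neighbour in $V_2'$ at all. So $\rho x_w\le(|V_1|\cdot x_{u^*}\text{-weighted neighbours in } V_1\cup S)$, and its neighbours lie in $V_1'\cup S$. I would show that $x_w$ is then noticeably smaller, or simply delete the edges $vw$ and compare again via the eigen-equation.

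I expect this case analysis, namely obtaining the precise eigenvector estimates for vertices of $S$ relative to those of $V_2'$, to be the hard step.

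\textbf{Step 4: counting.} Once $|V_1(S')|\le1$ for every non-empty $S'\subseteq S$, we have $V_1=V_1(\varnothing)\cup(V_1\cap S)\cup\bigcup_{S'\ne\varnothing}V_1(S')$. Hence
$$|V_1(\varnothing)|\ge |V_1|-|S|-2^{|S|}\ge |V_1|-20-2^{20}.$$
The proof of Claim \ref{Claim3.2} actually gives $\big||V_1|-\frac n2\big|\le(\varepsilon n^2+\frac{9n}{4})^{1/2}\le \varepsilon^{1/2}n+O(1)$. So the constant loss is absorbed and $|V_1(\varnothing)|\ge(\frac12-2\varepsilon^{\frac12})n$ for $n$ large.
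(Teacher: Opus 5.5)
There is a genuine gap in your Step 3. Your Step 1, your Step 2 (triangle-freeness, and the observation that $\chi(G-v)=\chi(G)$ because $v$ has a twin) and your Step 4 are fine. But you never determine the sign of the bracket, and a move that always goes into $V_1(\varnothing)$ cannot work as described.

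In the sub-case $V_2'\setminus N(v)=\varnothing$, your $G'$ is just $G$ with the edges from $v$ to $S'$ deleted. That is a proper subgraph, so $\rho(G')<\rho(G)$, and ``deleting the edges $vw$ and comparing again'' only makes it worse. In the other sub-case, you derive $x_u\ge(1-o(1))x_{u^*}$ on $V_2'$ from $\rho x_u\ge\sum_{z\in V_1(\varnothing)}x_z$. That bound helps only if $V_1(\varnothing)$ is already known to be large, which is the first assertion of the claim. You deduce that assertion in Step 4 from the second one, so the argument is circular. At this stage vertices of $S$ may have degree up to $\frac25 n$, and the classes $V_1(S')$ with $S'\neq\varnothing$ may a priori have linear size. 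So $V_1(\varnothing)$ could be small or empty. Even granting the estimates, you note yourself that they fail when $|S'|>|V_2'\setminus N(v)|$, and the ``cleverer move'' is never specified.

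The missing idea is that you need not know the sign. If $z\in V_1(\varnothing)$, then $N_G(z)=V_2'$ and your bracket equals $\rho(G)(x_z-x_v)$. So it is only a comparison of Perron entries of two twin classes, and you can move in whichever direction is favourable.

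The paper argues this way. Suppose two distinct classes $V_1(S_1)$ and $V_1(S_2)$ each contain at least two vertices. Pick $u_1\in V_1(S_1)$ and $v_1\in V_1(S_2)$ with $x_{u_1}\ge x_{v_1}$, and rewire $v_1$ so that $N_{G'}(v_1)=N_G(u_1)$. Then:
\begin{itemize}
\item $G'$ is triangle-free, since $N_G(u_1)$ is independent.
\item $\chi(G')\ge 4$, since $v_1$ still has its twin in $V_1(S_2)$; your own $G-v$ argument applies.
\item $\rho(G')-\rho(G)\ge 2x_{v_1}\rho(G)(x_{u_1}-x_{v_1})\ge 0$, with strict inequality via the eigen-equation at a vertex of $N_G(u_1)\setminus N_G(v_1)$.
\end{itemize}
Hence at most one class $V_1(S^*)$ has two or more vertices. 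There are at most $2^{20}$ classes, so $|V_1(S^*)|\ge |V_1|-|S|-2^{20}>\frac25 n$. If $S^*\neq\varnothing$, any $w\in S^*$ is adjacent to all of $V_1(S^*)$, contradicting $d_G(w)\le \frac25 n$. So $S^*=\varnothing$. This gives $|V_1(S')|\le 1$ for every non-empty $S'$, and your Step 4 counting then finishes the claim without any eigenvector estimates.
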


\begin{proof}
We first prove that there exists at most one subset $S'$ of $S$ such that $|V_1(S')|\geq 2$.
Otherwise, there exist two distinct subsets $S_1,S_2$ of $S$ such that $|V_1(S_1)|\geq 2$ and $|V_1(S_2)|\geq 2$.
By definition, we know that $V_1(S_1) \cap V_1(S_2)=\varnothing$.
Let $u_1,u_2\in V_1(S_1)$ and $v_1,v_2\in V_1(S_2)$ be arbitrary distinct vertices.
From Claim \ref{claim3.5},
it follows that $N_{V_2'}(u_1)=N_{V_2'}(u_2)$.
By Claim \ref{claim3.4}, we further obtain
$$N_{G}(u_1)=S_1\cup N_{V_2'}(u_1)=S_1\cup N_{V_2'}(u_2)=N_{G}(u_2),$$
and $x_{u_1}=x_{u_2}$.
Similarly, we also get $N_{G}(v_1)=N_{G}(v_2)$ and $x_{v_1}=x_{v_2}$.
Define
 $$\mathcal{A}_i:=\{S'~:~S'\subseteq S,~V_i(S')\neq \varnothing\}$$
 for $i=1,2$.
By Claim \ref{Claim3.3}, we have $|S|\leq 20$,
which implies that $|\mathcal{A}_i|\leq 2^{20}$.
Then by Claim \ref{claim3.5},
 $G$ is a blow-up of some graph $F$ with $|V(F)|\leq |S|+\sum_{i=1}^2|\mathcal{A}_i|\leq 2^{22}$.
It is not hard to verify that $\chi(F)=\chi(G)\geq 4$.

Assume without loss of generality that $x_{u_1}\geq x_{v_1}$.
Clearly, $\rho(G)x_{u_1}=\sum_{v\in N_{G}(u_1)}x_v$ and $\rho(G)x_{v_1}=\sum_{v\in N_{G}(v_1)}x_v$.
Define $G'$ to be the graph obtained from $G$ by deleting all edges incident to $v_1$ and
joining $v_1$ to $N_G(u_1)$.
Consequently,
\begin{align*}
\rho(G')-\rho(G)
&\geq \mathbf{x}^{\mathrm{T}}\big(A(G')-A(G)\big)\mathbf{x}
= 2x_{v_1}\Big(\sum\limits_{v\in N_{G}(u_1)}x_v-\sum\limits_{v\in N_{G}(v_1)}x_v\Big)\\
&=2x_{v_1}\rho(G)(x_{u_1}-x_{v_1})
\geq 0.
\end{align*}
If $\rho(G')=\rho(G)$,
then $\mathbf{x}$ is also a positive eigenvector of $G'$.
Observe that $N_G(u_1)\neq N_G(v_1)$.
Combined with the fact that $x_{u_1}\geq x_{v_1}$,
there must be a vertex $w\in N_G(u_1)\setminus N_G(v_1)$.
Then
\begin{align*}
\rho(G') x_{w}
=\sum_{v\in N_{G}(w)}x_v+x_{v_1}
>\sum_{v\in N_{G}(w)}x_v=\rho(G) x_{w},
\end{align*}
which contradicts $\rho(G')=\rho(G)$.
Thus, $\rho(G')>\rho(G)$.
By construction, $G'$ remains a blow-up of $F$,
which implies that $\chi(G')=\chi(F)\geq 4$ and $G'$ is triangle-free.
This contradicts the extremal property of $G$.
We conclude that there exists at most one subset $S'$ of $S$ with $|V_1(S')|\geq 2$.

For any $i\in \{1,2\}$, by Claim \ref{claim3.6} and $|\mathcal{A}_i|\leq 2^{20}$,
there exists a unique subset $S^*\subseteq S$ such that
 $|V_i(S^*)|\geq |V_i|-|\mathcal{A}_i|\geq (\frac{1}{2}-2\varepsilon^{\frac12})n.$
By the definition of $S$, we must have $S^*=\varnothing$.
Indeed, if not, then there would exist a vertex $u\in S^*$,
and we have $d_G(u)\geq |V_i(S^*)|\geq (\frac{1}{2}-2\varepsilon^{\frac12})n$,
which contradicts $d_G(u)\leq \frac{2n}{5}$ (since $u\in S$).
Consequently,
$|V_i(\varnothing)| \geq (\frac{1}{2}-2\varepsilon^{\frac12})n$ for each $i\in \{1,2\}$,
as required.
\end{proof}

\begin{claim}\label{claim3.7}
We have $e(S)\geq 1$ and $|S|=3$.
\end{claim}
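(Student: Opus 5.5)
The plan has three parts. First, the lower bound $e(S)\ge1$ follows from a direct colouring argument using Claim \ref{claim3.4}. Second, a sharper colouring argument gives $|S|\ge 3$. Third, a spectral upper bound via Lemma \ref{Lem2.4} gives $|S|\le 3$.

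\emph{Step 1: $e(S)\ge 1$.} Suppose $S$ is independent. By Claim \ref{claim3.4}, $V_1'$ and $V_2'$ are independent, and $V(G)=V_1'\cup V_2'\cup S$. So these three sets are the colour classes of a proper $3$-colouring of $G$. This contradicts $\chi(G)\ge4$.

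\emph{Step 2: $|S|\ge 3$.} By Step 1, $|S|\ge2$. Suppose $|S|=2$, so $S=\{s_1,s_2\}$ with $s_1s_2\in E(G)$. Since $G$ is triangle-free, $N_G(s_1)$ is an independent set, and it contains $s_2$. I would colour $G$ with the three classes
\[
A=N_G(s_1),\qquad B=(V_1'\setminus N_G(s_1))\cup\{s_1\},\qquad C=V_2'\setminus N_G(s_1).
\]
These sets cover $V(G)$. The set $B$ is independent because $V_1'$ is independent and $s_1$ has no neighbour in $V_1'\setminus N_G(s_1)$. The set $C$ is independent because $C\subseteq V_2'$. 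Hence $\chi(G)\le3$, a contradiction, so $|S|\ge3$.

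\emph{Step 3: $|S|\le 3$.} The key observation is that every $s\in S$ has bounded degree. By definition, no vertex of $V_1(\varnothing)\cup V_2(\varnothing)$ is adjacent to $S$. Every other vertex of $V_i'$ lies in some $V_i(S')$ with $S'\ne\varnothing$, and by Claim \ref{claim3.6} (applied to both $i=1,2$, by symmetry) each such class has at most one vertex. With $|S|\le 20$ from Claim \ref{Claim3.3}, this gives
\[
d_G(s)\le |S|+2\cdot 2^{20}=:C
\]
for every $s\in S$, where $C$ is an absolute constant.

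Now let $k=|S|$ and delete the vertices of $S$ one at a time, applying Lemma \ref{Lem2.4} at each step. Degrees only decrease under deletion, so
\[
\rho^2(G)\le \rho^2(G-S)+2Ck.
\]
The graph $G-S$ is bipartite on $n-k$ vertices, so $\rho^2(G-S)\le \frac{(n-k)^2}{4}$. This yields
\[
\rho(G)\le \frac{n-k}{2}+\frac{O(1)}{n}.
\]
If $k\ge4$, this is smaller than $\frac{n-3}{2}-\frac{13.2}{n}$ for $n$ large, contradicting Claim \ref{Claim3.1}. Therefore $|S|=3$.

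I expect the main obstacle to be the bounded-degree observation in Step 3. The naive bound $d_G(s)\le\frac{2n}{5}$ from the definition of $S$ is far too weak to make Lemma \ref{Lem2.4} useful. The argument works only because Claim \ref{claim3.6} makes every class $V_i(S')$ with $S'$ nonempty a singleton, and because the giant classes $V_i(\varnothing)$ are by definition anticomplete to $S$.
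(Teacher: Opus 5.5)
Your proof is correct and follows the paper's argument: the same three-class colouring for $e(S)\ge 1$, and the same bounded-degree bound from Claim \ref{claim3.6} combined with iterated Lemma \ref{Lem2.4} for $|S|\le 3$. For $|S|=2$, your colouring $N_G(s_1)$, $\{s_1\}\cup(V_1'\setminus N_G(s_1))$, $V_2'\setminus N_G(s_1)$ is exactly the paper's final colouring, but your version works uniformly and so avoids the paper's preliminary case split on whether $V_1(\{u_1\})$ is empty.
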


\begin{proof}

We first prove that $e(S)\geq 1$.
Otherwise, $S$ is an independent set of $G$.
Then $S\cup V_1'\cup V_2'$ forms the color classes of $G$,
which implies that $\chi(G)\leq 3$, contradicting $\chi(G)\geq 4$.

Next, we prove that $|S|\leq 3$.
Suppose not, and we
select four vertices $u_1,u_2,u_3,u_4\in S$
and set $G'=G-\{u_1,u_2,u_3,u_4\}$.
Recall that $|\mathcal{A}_i|\leq 2^{20}$.
For each $i\in \{1,2\}$ and $j\in \{1,2,3,4\}$, we have $d_{V_i}(u_j)\leq |\mathcal{A}_i\setminus \{\varnothing\}|< 2^{20}.$
It follows that
$$d_G(u_j)\leq d_{V_1}(u_j)+d_{V_2}(u_j)+|S|\leq 2^{22}.$$
Since $G'$ is triangle-free, Tur\'{a}n's theorem gives $\rho^2(G')\leq \frac{(n-4)^2}{4}$.
Recursively applying Lemma \ref{Lem2.4},
we can obtain that
\begin{align*}
\rho^2(G)&\leq \rho^2(G-\{u_1\})+2\cdot 2^{22} \leq   \rho^2(G')+8\cdot 2^{22}\leq \frac{(n-4)^2}{4}+8\cdot 2^{22},
\end{align*}
which contradicts the bound $\rho(G)\geq \frac{n-3}{2}-\frac{13.2}{n}$ given by Claim \ref{Claim3.1}.
Thus, $|S|\leq 3$.

\begin{figure}[!h]
\centering
\begin{tikzpicture}[scale=0.5, x=1.00mm, y=1.00mm, inner xsep=0pt, inner ysep=0pt, outer xsep=0pt, outer ysep=0pt]
\definecolor{L}{rgb}{0,0,0}
\definecolor{F}{rgb}{0,0,0}

\node[rectangle,fill=red,draw=red,inner sep=0pt,minimum width=8mm, minimum height=3mm,label={[label distance=1mm]90:$V_1(\varnothing)$}] (rect1) at (105,20) {}; % 绘制一个长方形
\node[rectangle,fill=blue,draw=blue,inner sep=0pt,minimum width=8mm, minimum height=3mm,label={[label distance=1mm]270:$V_2(\varnothing)$}] (rect2) at (105,-20) {}; % 绘制一个长方形

\node[circle,fill=cyan,draw=cyan,inner sep=0pt,minimum size=2mm,label={[label distance=1mm]270:$u_2$}] (x) at (10,0) {};
\node[circle,fill=red,draw=red,inner sep=0pt,minimum size=2mm,label={[label distance=1mm]270:$u_1$}] (y) at (-20,0) {};

\node[circle,fill=red,draw=red,inner sep=0pt,minimum size=2mm,label={[label distance=1mm]90:$V_1(\{u_2\})$}] (A1) at (60,20) {};
\node[circle,fill=cyan,draw=cyan,inner sep=0pt,minimum size=2mm,label={[label distance=1mm]90:$V_1(\{u_1\})$}] (A2) at (20,20) {};

\node[circle,fill=blue,draw=blue,inner sep=0pt,minimum size=2mm,label={[label distance=0.5mm]270:$V_2(\{u_2\})$}] (B1) at (60,-20) {};
\node[circle,fill=cyan,draw=cyan,inner sep=0pt,minimum size=2mm,label={[label distance=0.5mm]270:$V_2(\{u_1\})$}] (B2) at (20,-20) {};

\definecolor{L}{rgb}{0,0,0}
\path[line width=0.3mm, draw=L] (x) -- (y);
\path[line width=0.3mm, draw=L] (x) -- (A1);
\path[line width=0.3mm, draw=L] (x) -- (B1);

\path[line width=0.3mm, draw=L] (y) -- (A2);
\path[line width=0.3mm, draw=L] (y) -- (B2);

\path[line width=0.3mm, draw=L] (B2) -- (rect1);

\path[line width=0.3mm, draw=L] (A2) -- (rect2);

\path[line width=0.3mm, draw=L] (rect1) -- (rect2);

\path[line width=0.3mm, draw=L] (A1) -- (rect2);

\path[line width=0.3mm, draw=L] (B1) -- (rect1);

\path[line width=0.3mm, draw=L] (A1) -- (B2);
\path[line width=0.3mm, draw=L] (A2) -- (B1);

\end{tikzpicture}
\caption{The graph $G$.}{\label{fig-1.2}}
\end{figure}

Since $G[S]$ is non-empty, we have $|S|\geq 2$.
Now assume $|S|=2$ and write $S=\{u_1,u_2\}$.
Since $G[S]$ is non-empty, it follows that $u_1u_2\in E(G)$.
Moreover, since $G$ is triangle-free, we have $V_i(S)=\varnothing$.
If $V_1(\{u_1\})=\varnothing$,
then $\{u_2\}\cup (V_1'\cup \{u_1\})\cup V_2'$ forms the color classes of $G$,
which implies $\chi(G)\leq 3$.
However, this contradicts $\chi(G)\geq 4$.
Thus, $V_1(\{u_1\})\neq \varnothing$.
By the same argument, we conclude that $V_1(\{u_2\}),V_2(\{u_1\}),V_2(\{u_2\})\neq \varnothing$.
Figure \ref{fig-1.2} illustrates that
 $$\big(\{u_1\}\cup V_1(\{u_2\}\cup V_1(\varnothing)\big)\cup \big(V_2(\{u_2\}\cup V_2(\varnothing)\big)
\cup \big(V_1(\{u_1\}\cup \{u_2\}\cup V_2(\{u_1\}\big)$$
forms the color classes of $G$,
yielding $\chi(G)\leq 3$.
This contradicts $\chi(G)\geq 4$.
Therefore, $|S|=3$, as required.
\end{proof}

In view of Claim \ref{claim3.7},
we may assume that $S=\{u_1,u_2,u_3\}$ and $u_1u_2\in E(G)$.

\begin{claim}\label{Claim3.8}
The subgraph $G[S]$ contains exactly one edge $u_1u_2$.
\end{claim}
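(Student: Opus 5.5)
Since $S=\{u_1,u_2,u_3\}$ and $G$ is triangle-free, $G[S]$ has at most two edges: three edges would form a triangle. So it suffices to exclude the case where $G[S]$ is a path with two edges. Assume, for contradiction, that $G[S]$ has two edges.

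\textbf{The two-edge path.} Up to relabelling, there are two configurations: $u_1u_2,u_2u_3\in E(G)$ with $u_1u_3\notin E(G)$, and $u_1u_2,u_1u_3\in E(G)$ with $u_2u_3\notin E(G)$. Both say the same thing, namely that some vertex of $S$ is adjacent to the other two. Rename so that the center is $u_2$, the path is $u_1u_2u_3$, and $u_1u_3\notin E(G)$.

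\textbf{Three-colouring.} The plan is to exhibit a proper $3$-colouring of $G$, contradicting $\chi(G)\ge 4$. The ingredients are as follows.
\begin{itemize}
\item By Claim~\ref{claim3.4}, $V_1'$ and $V_2'$ are independent sets.
\item Since $G$ is triangle-free, no vertex outside $S$ is adjacent to both ends of the edge $u_1u_2$, nor to both ends of $u_2u_3$.
\item Hence each $v\in V_i'$ has $N_S(v)\in\{\varnothing,\{u_1\},\{u_3\},\{u_1,u_3\},\{u_2\}\}$.
\end{itemize}
Take the colour classes
\[
C_1=\{u_2\}\cup V_1(\varnothing)\cup V_1(\{u_1\})\cup V_1(\{u_3\})\cup V_1(\{u_1,u_3\}),\qquad C_2=\{u_1,u_3\}\cup V_2(\varnothing)\cup V_2(\{u_2\}),
\]
with $C_3$ the remaining vertices: $V_1(\{u_2\})$ together with $V_2(S')$ for $S'\subseteq\{u_1,u_3\}$, $S'\neq\varnothing$. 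We then check independence.
\begin{itemize}
\item $C_1$ is independent: $V_1'$ is independent, and $u_2$ has no neighbour in $V_1(S')$ for $u_2\notin S'$.
\item $C_2$ is independent: $u_1u_3\notin E$, $V_2'$ is independent, and vertices of $V_2(\varnothing)\cup V_2(\{u_2\})$ are not adjacent to $u_1$ or $u_3$.
\item $C_3$ needs one more check: a vertex $a\in V_1(\{u_2\})$ and a vertex $b\in V_2(S')$ with $S'\subseteq\{u_1,u_3\}$ nonempty satisfy $S_1\cap S_2=\varnothing$. By Claim~\ref{claim3.5}(ii), $a$ and $b$ are therefore \emph{adjacent}, so this colouring fails as stated.
\end{itemize}

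\textbf{Main obstacle and the repair.} The main obstacle is the cross edges forced by Claim~\ref{claim3.5}(ii). To handle them I would use Claim~\ref{claim3.6}: for nonempty $S'$ each $V_i(S')$ is a single vertex, and $V_1(\varnothing),V_2(\varnothing)$ are huge. The repair is to use the freedom to recolour $u_1,u_2,u_3$ instead.
\begin{itemize}
\item Vertices of $V_1'$ get colour $1$ and vertices of $V_2'$ get colour $2$, except vertices attached to $S$, which are moved to colour $3$ where needed.
\item Put $u_1,u_3$ in colour $1$. This requires $V_1(\{u_1\}),V_1(\{u_3\}),V_1(\{u_1,u_3\})$ to be moved to colour $3$.
\item Put $u_2$ in colour $2$. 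This requires $V_2(\{u_2\})$ to be moved to colour $3$.
\end{itemize}
Colour $3$ is then $V_1(S')$ for nonempty $S'\subseteq\{u_1,u_3\}$ together with $V_2(\{u_2\})$. These two sets have $S$-neighbourhoods that are disjoint, so by Claim~\ref{claim3.5}(ii) they are joined completely. That is bad, and so is its mirror image with the roles of $V_1$ and $V_2$ exchanged.

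\textbf{Fallback plan.} So a single colouring argument likely does not suffice, and I would case-split on which of the sets $V_i(\{u_2\})$ and $V_i(S')$, with $\varnothing\neq S'\subseteq\{u_1,u_3\}$, are nonempty.
\begin{itemize}
\item If, for some $i$, $V_i(\{u_2\})$ is empty or all the $V_{3-i}(S')$ are empty, one of the two colourings above works.
\item In the remaining case, all of these sets are nonempty singletons. The configuration is then a bounded graph blown up only at $V_1(\varnothing),V_2(\varnothing)$. Here I would argue by spectral comparison instead: apply Lemma~\ref{Lem2.4} or Lemma~\ref{Lem2.3} to bound $\rho(G)$ from the edges lost relative to $T_{n-3,2}$, and compare with Claim~\ref{Claim3.1}.
\end{itemize}
The expected quantitative fact is the following. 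Each vertex in $V_1\setminus V_1(\varnothing)$ misses at least the singletons on the other side, and $u_1,u_2,u_3$ have bounded degree. Hence $\rho(G)^2\le \frac{(n-3)^2}{4}-c\,n+O(1)$ with $c>13.2$-compatible slack, i.e.\ strictly below the bound $\left(\frac{n-3}{2}-\frac{13.2}{n}\right)^2$ of Claim~\ref{Claim3.1}. Making this constant work is the step I expect to be hardest.
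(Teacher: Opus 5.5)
Your proposal has a genuine gap: it never reaches a contradiction in the two-edge case.

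\textbf{Why the fallback fails.} Both colourings you try put vertices of $V_1'$ and $V_2'$ into the same third class, and the spectral fallback for the remaining case cannot work. Consider, for instance, $V_1(\{u_2\})=\{a\}$, $V_2(\{u_2\})=\{b\}$, $V_1(\{u_1\})=\{c\}$ and $V_2(\{u_3\})=\{d\}$, with every other vertex in $V_1(\varnothing)\cup V_2(\varnothing)$. This lies in your remaining case. By Claim~\ref{claim3.5}, $G-S$ is then $K_{|V_1'|,|V_2'|}$ minus the single edge $ab$. With balanced parts, Lemma~\ref{Lem2.3} gives $\rho(G)\ge\rho(G-S)\approx\frac{n-3}{2}-\frac{2}{n}$, which is fully consistent with Claim~\ref{Claim3.1}. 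So no count of missing edges or spectral comparison can exclude this configuration; only the hypothesis $\chi(G)\ge 4$ can. Your target estimate $\rho^2(G)\le\frac{(n-3)^2}{4}-cn+O(1)$ is also of the wrong order, since boundedly many missing edges change $\rho^2$ only by $O(1)$.

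\textbf{The missing idea.} Reserve one colour for all of $V_2'$ and $2$-colour $G[S\cup V_1']$. With $G[S]$ the path $u_1u_2u_3$, your class
\[
C_1=\{u_2\}\cup V_1(\varnothing)\cup V_1(\{u_1\})\cup V_1(\{u_3\})\cup V_1(\{u_1,u_3\})
\]
is already correct. Take next $C_2=\{u_1,u_3\}\cup V_1(\{u_2\})$. It is independent because $u_1u_3\notin E(G)$, because $N_S(v)=\{u_2\}$ for every $v\in V_1(\{u_2\})$, and because $V_1'$ is independent. Finally take $C_3=V_2'$, which is independent by Claim~\ref{claim3.4}. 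This gives $\chi(G)\le 3$ in every case, with no appeal to Claims~\ref{claim3.5} or~\ref{claim3.6} and no case split.

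The paper states the same fact contrapositively. Since $\chi(G-V_2')\ge 3$, the graph $G[S\cup V_1']$ contains an odd cycle. As $V_1'$ is independent and $|S|=3$, this cycle must be a $5$-cycle $u_iu_jau_kbu_i$ with $a,b\in V_1'$. Triangle-freeness then forces $u_ku_i,u_ku_j\notin E(G)$, so $G[S]$ has exactly one edge.
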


\begin{proof}
Since \(\chi(G) \geq 4\) and $V_2'$ is an independent set, it follows that $\chi(G-V_2') \geq 3$.
Hence, $G[\{u_1,u_2,u_3\} \cup V_1']$ contains an odd cycle $C$.
Since $V_1'$ is an independent set, at most $\frac{|V(C)|-1}{2}$ vertices on $C$ lie in $V_1'$.
It follows that at least \(\frac{|V(C)|+1}{2}\) vertices belong to $\{u_1,u_2,u_3\}$.
Consequently, $|V(C)| \leq 5$.
Since $G$ is triangle-free, we have $|V(C)| = 5$ and $\{u_1,u_2,u_3\}$ are all on $C$.
We may assume that $C=u_1u_2v_{23}u_3v_{13}u_1$.
Then, $u_2u_3\notin E(G)$ (otherwise, $\{u_2,u_3,v_{23}\}$ induces a triangle, a contradiction).
Similarly, $u_1u_3\notin E(G)$.
We conclude that $G[S]$ contains exactly one edge $u_1u_2$.
\end{proof}

\begin{claim}\label{claim3.9D}
Let $G'=G-S$.
We have $\rho(G)\leq \rho(G')+\frac{0.1}{n}$.
\end{claim}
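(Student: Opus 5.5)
Since $|S|=3$, removing $S$ only loses a bounded number of edges, so the plan is to bound the effect of $S$ through the Perron vector. Let $\mathbf{x}$ be the positive unit eigenvector of $G$ for $\rho=\rho(G)$. The identity
$$\rho=\mathbf{x}^{\mathrm T}A(G)\mathbf{x}=\mathbf{x}^{\mathrm T}A(G')\mathbf{x}+2\sum_{u\in S}x_u\sum_{v\in N_{G}(u)\setminus S}x_v+2\sum_{uv\in E(S)}x_ux_v$$
combined with the Rayleigh bound $\mathbf{x}^{\mathrm T}A(G')\mathbf{x}\le\rho(G')\sum_{v\notin S}x_v^2\le\rho(G')$ reduces the claim to showing that the contribution of the edges meeting $S$ is at most $0.1/n$.

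\textbf{Step 1: the degrees of the vertices of $S$ are bounded.} By Claim \ref{claim3.6}, $|V_i(S')|\le 1$ for every nonempty $S'\subseteq S$. Since $|S|=3$, each $u\in S$ has at most $2^{3}$ neighbours in each $V_i'$, and at most $2$ neighbours in $S$. Hence $d_G(u)\le 18$ for all $u\in S$.

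\textbf{Step 2: the entries of $\mathbf{x}$.} For the maximum entry $x_{u^*}$, I would take a vertex $w\in V_1(\varnothing)$. It is adjacent to all of $V_2(\varnothing)$, which has size at least $(\frac12-2\varepsilon^{1/2})n$. Comparing eigen-equations of vertices in the large classes $V_1(\varnothing)$ and $V_2(\varnothing)$ (they have identical neighbourhoods within each class, so equal entries), together with $\rho\approx n/2$ and $\sum x_v^2=1$, gives $x_v=\Theta(n^{-1/2})$ for all $v\in V_1(\varnothing)\cup V_2(\varnothing)$, and in particular $x_{u^*}=O(n^{-1/2})$. Concretely, $\rho x_w\le |V_2|x_{u^*}$ yields $x_{u^*}\le C/\sqrt n$ after normalisation. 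Then for $u\in S$, the relation $\rho x_u=\sum_{v\in N(u)}x_v\le 18\,x_{u^*}$ gives $x_u\le 36x_{u^*}/(n-4)=O(n^{-3/2})$.

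\textbf{Step 3: conclude.} Each term $x_ux_v$ with $u\in S$ is then $O(n^{-3/2})\cdot O(n^{-1/2})=O(n^{-2})$, and there are at most $3\cdot 18$ such terms. So
$$\rho-\rho(G')\le 2\cdot 54\cdot O(n^{-2})<\frac{0.1}{n}$$
for $n$ large.

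\textbf{Main obstacle.} The key estimate is $x_{u^*}=O(n^{-1/2})$. If $u^*$ lay in $S$, or in a singleton class $V_i(S')$, one would need care. I would handle this by noting that every vertex has degree at most $n/2+O(1)$ into the bipartite bulk. The equations $\rho x_{u^*}=\sum_{v\in N(u^*)} x_v$ and $\sum x_v^2=1$, applied to the two near-equal classes $V_1(\varnothing),V_2(\varnothing)$ with common entries $a,b$, give $\rho a\approx |V_2(\varnothing)|b$ and $\rho b\approx |V_1(\varnothing)|a$. Hence $a\approx b$ and $a^2 n\le 1$, and every other vertex has entry at most $(n/2+O(1))\max(a,b)/\rho+O(x_{u^*}/n)$. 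Absorbing the last term bounds $x_{u^*}\le (1+o(1))a=O(n^{-1/2})$.
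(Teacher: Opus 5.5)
Your proposal is correct. It shares the paper's skeleton: the Rayleigh quotient with the Perron vector of $G$, bounded degrees on $S$, $x_{u^*}=O(n^{-1/2})$, hence $x_u=O(n^{-3/2})$ for $u\in S$ and a total loss of $O(n^{-2})$. The genuine difference is how you bound $x_{u^*}$.

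The paper first shows $u^*\notin S$ via $d_G(u^*)\ge\rho(G)$ and places $u^*\in V_1'$. It then notes $|N_G(u^*)\setminus N_G(u)|\le 10$ for every $u\in V_1'$, which gives the lower bound $x_u\ge\frac12x_{u^*}$ on all of $V_1'$, so $x_{u^*}^2\le 12/n$ by normalisation.

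You go the other way. By Claims~\ref{claim3.4} and~\ref{claim3.5}, every vertex of $V_1(\varnothing)$ has neighbourhood exactly $V_2'$ (and symmetrically). So the bulk carries two common values $a,b$, controlled by $|V_1(\varnothing)|a^2+|V_2(\varnothing)|b^2\le1$. The eigen-equation at $u^*$ then bounds $x_{u^*}$ by $O(\max\{a,b\})$, because every vertex has at most $\max_i|V_i(\varnothing)|$ bulk neighbours and at most $17$ others. Your route does not need to locate $u^*$ or compare neighbourhoods. The paper's route gives the stronger statement that all entries on one side are within a factor $2$ of the maximum.

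Two steps in your Step 2 do not work as written:
\begin{itemize}
\item The sentence ``$\rho x_w\le|V_2|x_{u^*}$ yields $x_{u^*}\le C/\sqrt n$'' is backwards. An upper bound on $x_w$ in terms of $x_{u^*}$ cannot bound $x_{u^*}$.
\item ``In particular $x_{u^*}=O(n^{-1/2})$'' does not follow from bounds on the bulk entries alone.
\end{itemize}
It is your final paragraph that actually proves the estimate.

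In that paragraph, $a\approx b$, the unproved degree bound $n/2+O(1)$, and the factor $1+o(1)$ are all unnecessary. From $|V_i(\varnothing)|\ge n/3$ you get $\max\{a,b\}^2\le 3/n$. Then $(\rho(G)-17)x_{u^*}\le n\max\{a,b\}$ together with $\rho(G)\ge\frac{n-4}{2}$ gives $x_{u^*}\le 3\max\{a,b\}$, which suffices.
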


\begin{proof}
Since $x_{u^*}=\max\{x_v: v\in V(G)\}$,
we have $\rho(G)x_{u^*}=\sum_{v\in N_{G}(u^*)}x_v\leq d_G(u^*)x_{u^*}$,
which yields that $d_G(u^*)\geq \rho(G)\geq (\frac12-\varepsilon^{\frac12})n$.
Thus, $u^*\notin S$.
We may assume without loss of generality that $u^*\in V_1'$.
In light of Claim \ref{claim3.7}, we have $|S|=3$,
and hence $|\mathcal{A}_i|\leq 2^{|S|}=8$ for $i=1,2$.
For any vertex $u\in V_1'$, by Claim \ref{claim3.6},
we have $$|N_G(u^*)\setminus N_G(u)|\leq |S|+(|\mathcal{A}_2|-1)\leq 10.$$
Thus, $\rho(G)x_{u^*}-\rho(G)x_{u}\leq 10x_{u^*}$, which yields 
    $x_u\geq \frac{\rho(G)-10}{\rho(G)}x_{u^*}\geq \frac{1}{2}x_{u^*}.$
Hence,
 $$\sum_{u\in V(G)}x_u^2\geq \sum_{u\in V_1'}x_u^2\geq \frac{1}{4}x_{u^*}^2\cdot \frac{1}{3}n.$$
This, together with  $\sum_{u\in V(G)}x_u^2=1$, gives that $x_{u^*}^2\leq \frac{12}{n}$.
For $i\in \{1,2,3\}$, we have $d_G(u_i)\leq 1+|\mathcal{A}_1\setminus \{\varnothing\}|+|\mathcal{A}_2\setminus \{\varnothing\}|\leq 15$.
Then, $\rho(G)x_{u_i}=\sum_{v\in N_G(u_i)}x_v\leq 15x_{u^*}$,
and so $x_{u_i}\leq \frac{15x_{u^*}}{\rho(G)}$.
It follows that
\begin{align*}
\rho(G')-\rho(G)
&\geq \mathbf{x}^{\mathrm{T}}\big(A(G')-A(G)\big)\mathbf{x}
\geq -2\sum\limits_{i=1}^3\sum\limits_{v\in N_G(u_i)}x_{u_i}x_v   \\
&\geq -90\cdot \frac{15x_{u^*}}{\rho(G)}x_{u^*}\geq -\frac{0.1}{n},
\end{align*}
where the last inequality holds as $\rho(G)\geq \frac{n-3}{2}-\frac{12.2}{n}$, $x_{u^*}^2\leq \frac{12}{n}$, and $n$ is sufficiently large.
\end{proof}

From the construction of $C$, both $V_1(\{u_1,u_3\})$ and $V_1(\{u_2,u_3\})$ are non-empty.
Moreover, Claim \ref{claim3.6} implies that each of  $V_1(\{u_1,u_3\})$ and $V_1(\{u_2,u_3\})$ contains exactly one vertex.
Hence we may write $V_1(\{u_1,u_3\})=\{v_{13}\}$ and $V_1(\{u_2,u_3\})=\{v_{23}\}$.
Since $u_1u_2\in E(G)$ and $G$ is triangle-free, it follows that $V_1(\{u_1,u_2\})=V_1(\{u_1,u_2,u_3\})=\varnothing$.
The following two claims concern the sets $V_i(u_j)$ for $i=1,2$ and $j=1,2,3$.

\begin{claim}\label{Claim3.9}
We have $V_1(\{u_3\})=V_2(\{u_3\})=\varnothing$.
\end{claim}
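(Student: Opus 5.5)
The plan is to argue by contradiction via a local switching argument, in the same spirit as the proof of Claim~\ref{claim3.6}. By symmetry it suffices to treat $V_1(\{u_3\})$, so suppose some $w\in V_1(\{u_3\})$ exists. First I would record the neighbourhoods that Claims~\ref{claim3.4} and \ref{claim3.5} force. By Claim~\ref{claim3.5}, $N_G(w)=\{u_3\}\cup\bigcup\{V_2(S'):u_3\notin S'\}$. A vertex $z\in V_1(\varnothing)$, which exists by Claim~\ref{claim3.6}, has $N_G(z)=V_2'$. Thus $w$ and $z$ differ only in the vertex $u_3$ and in the at most four small sets $V_2(S')$ with $u_3\in S'$, each of size at most $1$ by the $V_2$-analogue of Claim~\ref{claim3.6}. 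In particular $|N_G(w)\,\triangle\, N_G(z)|\le 5$, and $N_G(w)\neq N_G(z)$.

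The key step is a twin replacement. If $x_z\ge x_w$, let $G'$ be obtained from $G$ by deleting all edges at $w$ and joining $w$ to $N_G(z)$. If $x_w>x_z$, instead make $z$ a twin of $w$. In either case the computation from Claim~\ref{claim3.6},
\[
\rho(G')-\rho(G)\ \ge\ 2x\,\rho(G)\,(x_{\max}-x_{\min})\ \ge 0 ,
\]
where $x$ is the entry of the moved vertex, shows $\rho(G')\ge\rho(G)$. Equality is ruled out exactly as there, because some vertex of $N_G(w)\triangle N_G(z)$ would gain a new neighbour, contradicting that $\mathbf{x}$ is still an eigenvector. Triangle-freeness of $G'$ is automatic, since the moved vertex becomes a twin of a vertex of the triangle-free graph $G-\{\text{moved vertex}\}$. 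It then remains to check $\chi(G')\ge 4$; this yields the contradiction with the extremality of $G$.

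For the chromatic number, the case in which $z$ is made a twin of $w$ is easy. Here $G'\supseteq G-z$, and $G-z$ still contains the whole structure $S$, $v_{13},v_{23}$, $w$ and the other huge twin classes. Since $z$ has many twins in $V_1(\varnothing)$, deleting it cannot lower the chromatic number, so $\chi(G')\ge\chi(G-z)=\chi(G)\ge4$.

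The main obstacle is the other case: $w$ is moved into $V_1(\varnothing)$, so $G'\supseteq G-w$, but possibly $\chi(G-w)=3$. My first attempt would be to show directly that $\chi(G-w)\ge 4$. Suppose $c$ were a proper $3$-colouring of $G-w$. The $5$-cycle $C=u_1u_2v_{23}u_3v_{13}u_1$ together with the large complete bipartite graph between $V_1(\varnothing)$ and $V_2(\varnothing)$ forces $V_1(\varnothing)$ and $V_2(\varnothing)$ to be monochromatic with distinct colours, say colours $1$ and $2$. One then pins down the colours of $u_1,u_2,u_3,v_{13},v_{23}$ and the small classes. I would then show that $w$ can always receive a colour: its only neighbours are $u_3$ and vertices of $V_2'$ not adjacent to $u_3$. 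So if $c(u_3)\neq 2$, we may recolour so that $w$ takes colour $1$ (or the colour of $V_1(\varnothing)$), unless a small class $V_2(S')$ with $u_3\notin S'$ carries colour $1$. That configuration would be handled by a short case analysis on the colours of $u_1,u_2$. Extending $c$ to $w$ in this way contradicts $\chi(G)\ge4$.

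If this case analysis fails, the fallback is spectral. In that situation $w$ would be an essential vertex, and I would instead compare $\rho(G)$ with $\rho(F_1(\cdot,\cdot))$ via Claim~\ref{claim3.9D} and Lemma~\ref{Lem2.3}, showing that the extra forced deletions between $w$ and $V_2(\varnothing)$-type sets cost more than the gain.
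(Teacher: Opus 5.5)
Your switching framework is sound and takes a genuinely different route from the paper. The paper adds the edge $u_1v_3$ and uses extremality to force a vertex of $V_2(\{u_1\})$ adjacent to $v_3$. It then counts at least seven missing edges between $V_1'$ and $V_2'$, and contradicts Claim~\ref{Claim3.1} via Lemma~\ref{Lem2.3} and Claim~\ref{claim3.9D}. However, the step you call the main obstacle is left unproved, and it carries the entire argument. Let $w_{13}$ be the vertex of $V_2(\{u_1,u_3\})$. Since $N_G(w)\setminus N_G(z)=\{u_3\}$ and $w_{13}\in N_G(z)\setminus N_G(w)$, you get $\rho(G)(x_z-x_w)\ge x_{w_{13}}-x_{u_3}>0$. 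This holds because $u_3$ has bounded degree while $w_{13}$ is adjacent to all of $V_1(\varnothing)$. So your ``easy'' case $x_w>x_z$ never occurs, and everything hinges on proving $\chi(G-w)\ge 4$.

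Your sketch of that step is incorrect as written. Normalise $V_1(\varnothing)$ to colour $1$ and $V_2(\varnothing)$ to colour $2$; this is not forced, but can be arranged by recolouring twin classes. Then every vertex of $V_2'$ is adjacent to $V_1(\varnothing)$ and so avoids colour $1$. Hence the exceptional configuration you propose to analyse cannot occur, and the correct condition for giving $w$ colour $1$ is $c(u_3)\neq 1$, not $c(u_3)\neq 2$. The case that actually needs an argument, $c(u_3)=1$, is not addressed, and it is not settled by the colours of $u_1,u_2$. The missing observation is this:
\begin{itemize}
\item if $c(u_3)=1$, then $v_{13}$ and $v_{23}$ are adjacent to $u_3$ and to $V_2(\varnothing)$, so both receive colour $3$;
\item by Claim~\ref{claim3.5}(ii), $V_2(\{u_1\})$ is complete to $v_{23}$ and $V_2(\{u_2\})$ is complete to $v_{13}$;
\item so every neighbour of $w$ in $V_2'$ has colour $2$, and $w$ can take colour $3$.
\end{itemize}
Thus every proper $3$-colouring of $G-w$ extends to $G$, so $\chi(G-w)\ge 4$. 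Your switching argument then gives the contradiction without the delicate constant comparisons ($13.2$ versus $13.9$) that the paper's proof relies on. The spectral fallback you mention is essentially the paper's argument but is not carried out, so as it stands the proposal is incomplete.
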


\begin{proof}
By symmetry, it suffices to prove that $V_1(\{u_3\})=\varnothing$.
Suppose to the contrary,  then by Claim \ref{claim3.6},
 we may assume that $V_1(\{u_3\})=\{v_3\}$.
Let $G'$ be the graph obtained from $G$ by adding the edge $u_1v_3$.
Then, $\chi(G')\geq \chi(G)\geq 4$ and $\rho(G')>\rho(G)$.
By the choice of $G$, it follows that $G'$ contains a triangle containing the edge $u_3v_3$.
Let $w$ denote the third vertex of the triangle. 
Since $u_1u_3,u_2u_3\notin E(G)$, it follows that $w\notin \{u_1,u_2\}$.
Moreover, $w\notin V_1'$ because $V_1'$ is an independent set.
Thus, $w\in V_2'$, and $u_3w,v_3w\in E(G)$.
Since $G$ is triangle-free,
we see that $w\neq w_{13}$ (otherwise, $\{u_3,w_{13},v_3\}$ induces a triangle, a contradiction) and
$$v_{13}w_{13},v_{13}w_{23},v_{23}w_{13},v_{23}w_{23},
v_{13}w,w_{13}v_3,w_{23}v_3\notin E(G).$$
Let $G''=G-S$.
Then $G''$ can be obtained from $K_{|V_1'|,|V_2'|}$ by deleting at least 7 edges.
By Lemma \ref{Lem2.3}, we obtain
$$\rho(G'')\leq \rho(K_{|V_1'|,|V_2'|})-\frac{13.9}{n}\leq \sqrt{|V_1'|\cdot |V_2'|}-\frac{13.9}{n}
\leq \frac{n-3}{2}-\frac{13.9}{n}.$$
Combining this with Claim \ref{claim3.9D} yields $\rho(G)\leq \frac{n-3}{2}-\frac{13.8}{n}$,
which contradicts Claim \ref{Claim3.1}.
Thus, $V_1(\{u_3\})=\varnothing$.
Similarly, we also get $V_2(\{u_3\})=\varnothing$.
\end{proof}

\begin{claim}\label{claim3.10}
Exactly two of $V_1(\{u_1\})$, $V_2(\{u_1\})$, $V_1(\{u_2\})$, and $V_2(\{u_2\})$ are empty sets.
\end{claim}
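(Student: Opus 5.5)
The plan is to prove two inequalities separately. \emph{At least two} of the four sets are empty, by a spectral counting argument in the style of Claim \ref{Claim3.9}. \emph{At most two} are empty, because otherwise $G$ would be $3$-colourable.

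First I fix what is already known about the neighbourhood classes. We have $S=\{u_1,u_2,u_3\}$ with $u_1u_2$ the only edge in $G[S]$ (Claim \ref{Claim3.8}), and $V_1(\{u_1,u_3\})=\{v_{13}\}$, $V_1(\{u_2,u_3\})=\{v_{23}\}$. Repeating the odd-cycle argument of Claim \ref{Claim3.8} with $V_1'$ in place of $V_2'$ gives the analogous vertices $w_{13},w_{23}\in V_2'$, with $N_S(w_{13})=\{u_1,u_3\}$ and $N_S(w_{23})=\{u_2,u_3\}$; these are the vertices already used in the proof of Claim \ref{Claim3.9}. Since $u_1u_2\in E(G)$ and $G$ is triangle-free, no vertex of $V_1'\cup V_2'$ has both $u_1$ and $u_2$ in its $S$-neighbourhood. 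By Claim \ref{Claim3.9}, $V_i(\{u_3\})=\varnothing$. So the only possible nonempty classes are $V_i(S')$ for
\[
S'\in\bigl\{\varnothing,\ \{u_1\},\ \{u_2\},\ \{u_1,u_3\},\ \{u_2,u_3\}\bigr\}.
\]

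For ``at least two empty'', suppose three of the four sets are nonempty. By Claim \ref{claim3.5}(i), $v_{13},v_{23}$ are non-adjacent to $w_{13},w_{23}$, which gives $4$ missing cross edges. A vertex of $V_1(\{u_j\})$ shares $u_j$ with $w_{j3}$, so it misses $w_{j3}$. Symmetrically, a vertex of $V_2(\{u_j\})$ misses $v_{j3}$. These missing pairs are distinct from each other and from the previous four, so $G-S$ is obtained from $K_{|V_1'|,|V_2'|}$ by deleting at least $7$ edges. Then, exactly as in Claim \ref{Claim3.9}, Lemma \ref{Lem2.3} gives $\rho(G-S)\le \frac{n-3}{2}-\frac{13.9}{n}$. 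Claim \ref{claim3.9D} then yields $\rho(G)\le\frac{n-3}{2}-\frac{13.8}{n}$, contradicting Claim \ref{Claim3.1}.

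For ``at most two empty'', suppose at least three are empty. By the symmetries $u_1\leftrightarrow u_2$ and $V_1\leftrightarrow V_2$, we may assume every set other than possibly $V_1(\{u_1\})$ is empty. I then exhibit a proper $3$-colouring, contradicting $\chi(G)\ge4$:
\[
C_1=\{u_2\}\cup V_1(\varnothing)\cup V_1(\{u_1\})\cup V_1(\{u_1,u_3\}),\qquad C_2=\{u_1,u_3\}\cup V_2(\varnothing),
\]
\[
C_3=V_1(\{u_2,u_3\})\cup V_2(\{u_1,u_3\})\cup V_2(\{u_2,u_3\}).
\]
Independence of $C_1$ holds because $V_1'$ is independent (Claim \ref{claim3.4}) and $u_2$ has no neighbour in these $V_1$-classes. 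For $C_2$, note $u_1u_3\notin E(G)$ and $V_2(\varnothing)$ has no neighbour in $S$. For $C_3$, each pair of classes shares an $S$-neighbour ($u_2$ or $u_3$), so Claim \ref{claim3.5}(i) gives no edges between them, and $V_2'$ is independent. This covers every vertex, since all other classes are empty.

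The step needing the most care is the bookkeeping: confirming that $w_{13},w_{23}$ are available in $V_2'$, and that the seven missing edges are genuinely distinct so that Lemma \ref{Lem2.3} applies with $\alpha_2\ge7$. If fewer missing edges could be certified in some configuration, I would fall back on the coefficient estimate $-\frac{2\alpha_2}{n}$ from Lemma \ref{Lem2.3}(i). The gap between $\frac{12.1}{n}$ (at six deletions) and the $\frac{13.2}{n}$ of Claim \ref{Claim3.1} shows that exactly seven is the threshold needed.
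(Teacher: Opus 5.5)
Your proposal is correct and follows the paper's proof. For ``at least two empty'' you use the same seven certified non-edges in $G-S$, fed into Lemma~\ref{Lem2.3} and Claim~\ref{claim3.9D}. For ``at most two empty'' you give a $3$-colouring, written out explicitly where the paper instead appeals to $G$ being a blow-up of $F_1-\{v_1\}$. Your odd-cycle argument in $G[S\cup V_2']$ also supplies the existence of $w_{13},w_{23}$, which the paper uses without justification.
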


\begin{proof}
We begin by proving that at least two of $V_1(\{u_1\})$, $V_2(\{u_1\})$, $V_1(\{u_2\})$, and $V_2(\{u_2\})$ must be empty.
Suppose for contradiction that at most one of them is empty.
Without loss of generality, assume that $V_1(\{u_2\})=\{v_2\}$, $V_2(\{u_1\})=\{w_1\}$, and $V_2(\{u_2\})=\{w_2\}$.
Since $G$ is triangle-free,
we see that
   $$v_{13}w_{13},v_{13}w_{23},v_{23}w_{13},v_{23}w_{23},v_2w_{23},v_2w_2,v_{13}w_1\notin E(G).$$
Let $G''=G-S$.
Then $G''$ can be obtained from $K_{|V_1'|,|V_2'|}$ by deleting at least 7 edges.
By Lemma \ref{Lem2.3}, we obtain
$$\rho(G'')\leq \rho(K_{|V_1'|,|V_2'|})-\frac{13.9}{n}\leq \sqrt{|V_1'|\cdot |V_2'|}-\frac{13.9}{n}
\leq \frac{n-3}{2}-\frac{13.9}{n}.$$
Combining this with Claim \ref{claim3.9D} yields $\rho(G)\leq \frac{n-3}{2}-\frac{13.8}{n}$,
which contradicts Claim \ref{Claim3.1}.
Thus, at least two of $V_1(\{u_1\})$, $V_2(\{u_1\})$, $V_1(\{u_2\})$, and $V_2(\{u_2\})$ are empty sets.

We then prove that exactly two of $V_1(\{u_1\})$, $V_2(\{u_1\})$, $V_1(\{u_2\})$, and $V_2(\{u_2\})$ 
are empty sets.
For the sake of contradiction, suppose that exactly three of them are non-empty
(the case where all of $V_1(\{u_1\})$, $V_2(\{u_1\})$, $V_1(\{u_2\})$, and $V_2(\{u_2\})$ are empty is similar and hence omitted here).
We may assume without loss of generality that $V_1(\{u_2\})$ is non-empty.
By Claim \ref{claim3.5},
it is not hard to verify that $G'$ is a blow-up of $F_1-\{v_1\}$ (see $F_1$ in Figure \ref{fig-1.1A}).
From Figure \ref{fig-1.1A}, we observe that $\chi(F_1-\{v_1\})\leq 3$.
Consequently, $\chi(G)=\chi(F_1-\{v_1\})\leq 3$, a contradiction.
\end{proof}

\begin{claim}\label{claim3.15}
There exists an integer $k$ with $|k|\leq 20$ such that $G\cong F_1(\frac{n-9-k}{2},\frac{n-9+k}{2}).$
\end{claim}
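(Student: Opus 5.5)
By Claim \ref{claim3.10}, exactly two of the four singleton classes $V_1(\{u_1\}),V_2(\{u_1\}),V_1(\{u_2\}),V_2(\{u_2\})$ are non-empty. The plan is to settle which two, record every class $V_i(S')$ that can still be non-empty, and then read off the adjacencies from Claim \ref{claim3.5} to identify $G$ as a blow-up of $F_1$ in which only the vertices corresponding to $x$ and $y$ are blown up.

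\textbf{Step 1: which two classes.} There are six choices of the two non-empty classes. Each choice gives a blow-up of a fixed small graph, since Claim \ref{claim3.5} decides all edges between $V_1'$ and $V_2'$, and Claim \ref{claim3.4} makes each $V_i'$ independent.
\begin{itemize}
\item If both non-empty classes lie on the same side, or both are attached to the same $u_j$, I will write down an explicit proper $3$-coloring, as in the proof of Claim \ref{claim3.7}. This contradicts $\chi(G)\ge 4$.
\item If the classes are on one side with the other ``diagonal'' configuration, I will count forced non-edges between $V_1'$ and $V_2'$. If there are at least $7$, then Lemma \ref{Lem2.3} together with Claim \ref{claim3.9D} gives $\rho(G)\le\frac{n-3}{2}-\frac{13.8}{n}$, contradicting Claim \ref{Claim3.1}.
\end{itemize}
The expected survivor, up to the symmetries $u_1\leftrightarrow u_2$ and $V_1\leftrightarrow V_2$, is $V_1(\{u_1\})=\{v_1\}$ and $V_2(\{u_2\})=\{w_2\}$. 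Compared with $F_1$, this should match $v_1$ and the vertex $v_2$ that is adjacent to $u_2$.

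\textbf{Step 2: the classes with two or more $S$-neighbours.} From the $C_5$ we already have $v_{13},v_{23}\in V_1'$. I also need the $V_2$ analogues $w_{13}=V_2(\{u_1,u_3\})$ and $w_{23}=V_2(\{u_2,u_3\})$.
\begin{itemize}
\item For existence, I argue that if either is empty, the graph is $3$-colorable, again by an explicit coloring compared with Figure \ref{fig-1.1A}. Alternatively, I can rerun the odd-cycle argument of Claim \ref{Claim3.8} with the roles of $V_1'$ and $V_2'$ swapped.
\item Claim \ref{claim3.6} applied to both sides (it holds for $V_2$ by symmetry) makes each of these classes a single vertex.
\item The classes $V_i(\{u_1,u_2\})$ and $V_i(S)$ are empty because $G$ is triangle-free.
\item Claim \ref{Claim3.9} removes the classes $V_i(\{u_3\})$.
\end{itemize}
So $V_1'=\{v_{13},v_{23},v_1\}\cup V_1(\varnothing)$ and $V_2'=\{w_{13},w_{23},w_2\}\cup V_2(\varnothing)$, with the roles labelled as in $F_1$. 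Claim \ref{claim3.5} then fixes all cross edges, and these agree with $F_1$ once $V_1(\varnothing)$ is identified with $y$ and $V_2(\varnothing)$ with $x$ (up to relabelling the sides). Hence $G\cong F_1(a,b)$ with $a+b=n-10$.

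\textbf{Step 3: the sizes.} By Claim \ref{Claim3.2}, each $|V_i|$ is close to $n/2$, but that is only accurate to within $O(\varepsilon^{1/2}n)$. To get $|a-b|\le 20$ I compare spectral radii directly.
\begin{itemize}
\item $G-S$ is obtained from $K_{a+3,\,b+3}$ by deleting a bounded number of edges, namely the non-edges forced by the labelled vertices.
\item If $|a-b|$ is large, Lemma \ref{Lem2.3}(ii) with $2k\le |a-b|$ gives $\rho(G-S)\le \frac{n-3}{2}-\frac{ck^2}{n}$ for an absolute constant $c$. Then Claim \ref{claim3.9D} contradicts the lower bound in Claim \ref{Claim3.1} once $k$ exceeds a modest constant.
\end{itemize}

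The main obstacle is this last step: tuning the constants in Lemma \ref{Lem2.3}(ii) so that the threshold is exactly $|k|\le 20$, with $k$ defined by $a=\frac{n-9-k}{2}$. This means handling the $\frac{2\alpha_2}{n}$ term, with $\alpha_2$ around $6$, alongside the $\frac{k^2}{n}$ penalty. A secondary difficulty is the careful case check in Step 1, which must verify $3$-colourability or count at least $7$ missing edges in each excluded configuration.
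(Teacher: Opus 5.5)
\paragraph{The gap: the case analysis in Step 1 is reversed.}
Because of this, neither Step 1 nor Step 2 can be carried out as planned. There are three types of configuration, and you have assigned each one the wrong fate.

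\emph{Both singleton classes on the same side.} Say $V_1(\{u_1\})=\{v_1\}$ and $V_1(\{u_2\})=\{v_2\}$. Then Claim \ref{claim3.5} reproduces $F_1$ exactly, with $V_1(\varnothing)$ playing $x$ and $V_2(\varnothing)$ playing $y$. In $F_1$ the vertices $v_1,v_2$ lie on the same side as $v_{13},v_{23},x$, and $y$ is joined to all five of them. Since $F_1$ is the Gr\"{o}tzsch graph, the $3$-coloring you promise does not exist. This is in fact the configuration that survives.

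\emph{Both classes attached to the same $u_j$.} Say $V_1(\{u_2\})=\{v_2\}$ and $V_2(\{u_2\})=\{w_2\}$. The base graph is also $4$-chromatic: it is again isomorphic to the Gr\"{o}tzsch graph, now blown up at two adjacent vertices of its $5$-cycle. So no $3$-coloring exists here either, and the case must be excluded spectrally. It forces seven missing cross edges:
$v_{13}w_{13},\ v_{13}w_{23},\ v_{23}w_{13},\ v_{23}w_{23},\ v_{23}w_2,\ v_2w_{23},\ v_2w_2$.
Lemma \ref{Lem2.3} and Claim \ref{claim3.9D} then give $\rho(G)\le\frac{n-3}{2}-\frac{13.8}{n}$, contradicting Claim \ref{Claim3.1}.

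\emph{The diagonal configuration.} Your ``expected survivor'' $V_1(\{u_1\})=\{v_1\}$, $V_2(\{u_2\})=\{w_2\}$ is properly $3$-colored by the classes $\{v_{13},v_{23},w_{13},w_{23}\}$, $\{u_2,v_1\}\cup V_1(\varnothing)$ and $\{u_1,u_3,w_2\}\cup V_2(\varnothing)$. So it contradicts $\chi(G)\ge 4$ and cannot be a blow-up of $F_1$. It forces only six non-edges, so edge counting cannot exclude it; only this coloring does.

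Consequently the identification in your Step 2 is false. The correct parts are $\{v_{13},v_{23},v_1,v_2\}\cup V_1(\varnothing)$ and $\{w_{13},w_{23}\}\cup V_2(\varnothing)$. The two blown-up classes have total size $n-9$, not $n-10$.

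\paragraph{Step 3.}
Your route via Lemma \ref{Lem2.3}(ii) is viable: apply it with a small fixed $k$, which is allowed whenever the imbalance is at least $2k$. However, you overrate the difficulty. No tuning to hit exactly $20$ is needed, since any bound of at most $20$ suffices.

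The paper's argument is cruder. It observes that $G-S\subseteq K_{\frac{n-1-k}{2},\frac{n-5+k}{2}}$, so $\rho^2(G-S)\le\frac{(n-3)^2-(k-2)^2}{4}$. Three applications of Lemma \ref{Lem2.4} with $d_G(u_i)=4$ give $\rho^2(G)\le\frac{(n-3)^2-(k-2)^2}{4}+24$. Comparing with $\rho^2(G)\ge\frac{(n-3)^2}{4}-14$ yields $(k-2)^2\le 152$, and hence $|k|\le 20$.
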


\begin{proof}
By Claim \ref{claim3.10},
exactly two of $V_1(\{u_1\})$, $V_2(\{u_1\})$, $V_1(\{u_2\})$, and $V_2(\{u_2\})$ are empty sets.
Without loss of generality, let $V_2(\{u_1\})=\varnothing$.
The proof proceeds in three cases.

\begin{figure}[!h]
\centering
% 第一个子图：占页面45%宽度，左对齐
\begin{minipage}{0.45\textwidth}
\centering
\begin{tikzpicture}[scale=0.45, x=1.00mm, y=1.00mm, inner xsep=0pt, inner ysep=0pt, outer xsep=0pt, outer ysep=0pt]
\definecolor{L}{rgb}{0,0,0}
\definecolor{F}{rgb}{0,0,0}

\node[circle,fill=red,draw=red,draw,inner sep=0pt,minimum size=2mm,label={[label distance=1mm]90:$x$}] (rect1) at (80,20) {};
\node[circle,fill=blue,draw=blue,inner sep=0pt,minimum size=2mm,label={[label distance=1mm]270:$y$}] (rect2) at (80,-20) {};

\node[circle,fill=red,draw=red,inner sep=0pt,minimum size=2mm,label={[label distance=1mm]270:$u_2$}] (x) at (10,0) {};
\node[circle,fill=blue,draw=blue,inner sep=0pt,minimum size=2mm,label={[label distance=1mm]270:$u_1$}] (y) at (-10,0) {};
\node[circle,fill=blue,draw=blue,inner sep=0pt,minimum size=2mm,label={[label distance=1mm]180:$u_3$}] (z) at (30,0) {};

\node[circle,fill=cyan,draw=cyan,inner sep=0pt,minimum size=2mm,label={[label distance=1mm]90:$v_{23}$}] (A1) at (40,20) {};
\node[circle,fill=cyan,draw=cyan,inner sep=0pt,minimum size=2mm,label={[label distance=1mm]90:$v_{13}$}] (A2) at (20,20) {};
\node[circle,fill=F,draw,inner sep=0pt,minimum size=2mm,label={[label distance=0.5mm]90:$v_2$}]
 (Ax) at (60,20) {};

\node[circle,fill=cyan,draw=cyan,inner sep=0pt,minimum size=2mm,label={[label distance=0.5mm]270:$w_{23}$}]
(B1) at (40,-20) {};
\node[circle,fill=cyan,draw=cyan,inner sep=0pt,minimum size=2mm,label={[label distance=0.5mm]270:$w_{13}$}]
(B2) at (20,-20) {};
\node[circle,fill=blue,draw=blue,inner sep=0pt,minimum size=2mm,label={[label distance=0.5mm]270:$w_2$}]
(Bx) at (60,-20) {};

\definecolor{L}{rgb}{0,0,0}
\path[line width=0.3mm, draw=L] (x) -- (y);
\path[line width=0.3mm, draw=L] (x) -- (A1);
\path[line width=0.3mm, draw=L] (x) -- (B1);
\path[line width=0.3mm, draw=L] (x) -- (Ax);
\path[line width=0.3mm, draw=L] (x) -- (Bx);

\path[line width=0.3mm, draw=L] (y) -- (A2);
\path[line width=0.3mm, draw=L] (y) -- (B2);

\path[line width=0.3mm, draw=L] (z) -- (A2);
\path[line width=0.3mm, draw=L] (z) -- (B2);
\path[line width=0.3mm, draw=L] (z) -- (B1);
\path[line width=0.3mm, draw=L] (A1) -- (z);

\path[line width=0.3mm, draw=L] (B2) -- (Ax);
\path[line width=0.3mm, draw=L] (B2) -- (rect1);

\path[line width=0.3mm, draw=L] (A2) -- (Bx);
\path[line width=0.3mm, draw=L] (A2) -- (rect2);

\path[line width=0.3mm, draw=L] (Ax) -- (rect2);
\path[line width=0.3mm, draw=L] (Bx) -- (rect1);

\path[line width=0.3mm, draw=L] (rect1) -- (rect2);

\path[line width=0.3mm, draw=L] (A1) -- (rect2);

\path[line width=0.3mm, draw=L] (B1) -- (rect1);

\draw(30,-40) node[anchor=base west]{\fontsize{12.23}{17.07}\selectfont $F_2$};

\end{tikzpicture}
\end{minipage}
\hspace{2mm}
% 第二个子图：占页面45%宽度，左对齐
\begin{minipage}{0.45\textwidth}
\centering
\begin{tikzpicture}[scale=0.45, x=1.00mm, y=1.00mm, inner xsep=0pt, inner ysep=0pt, outer xsep=0pt, outer ysep=0pt]
\definecolor{L}{rgb}{0,0,0}
\definecolor{F}{rgb}{0,0,0}

\node[circle,fill=cyan,draw=cyan,inner sep=0pt,minimum size=2mm,label={[label distance=1mm]90:$v_{13}$}] (u1) at (0,20) {};
\node[circle,fill=cyan,draw=cyan,inner sep=0pt,minimum size=2mm,label={[label distance=1mm]90:$v_{23}$}] (u2) at (20,20) {};
\node[circle,fill=red,draw=red,inner sep=0pt,minimum size=2mm,label={[label distance=1mm]90:$v_1$}] (u3) at (40,20) {};
\node[circle,fill=blue,draw=blue,inner sep=0pt,minimum size=2mm,label={[label distance=1mm]270:$w_2$}] (u4) at (40,-20) {};
\node[circle,fill=red,draw=red,inner sep=0pt,minimum size=2mm,label={[label distance=1mm]90:$x$}] (u5) at (60,20) {};

\node[circle,fill=blue,draw=blue,inner sep=0pt,minimum size=2mm,label={[label distance=1mm]180:$u_1$}] (u6) at (-30,0) {};
\node[circle,fill=red,draw=red,inner sep=0pt,minimum size=2mm,label={[label distance=1mm]270:$u_2$}] (u7) at (-10,0) {};
\node[circle,fill=blue,draw=blue,inner sep=0pt,minimum size=2mm,label={[label distance=1mm]180:$u_{3}$}] (u11) at (10,0) {};

\node[circle,fill=cyan,draw=cyan,inner sep=0pt,minimum size=2mm,label={[label distance=1mm]270:$w_{13}$}] (u8) at (0,-20) {};
\node[circle,fill=cyan,draw=cyan,inner sep=0pt,minimum size=2mm,label={[label distance=1mm]270:$w_{23}$}] (u9) at (20,-20) {};
\node[circle,fill=blue,draw=blue,inner sep=0pt,minimum size=2mm,label={[label distance=1mm]270:$y$}] (u10) at (60,-20) {};

\path[line width=0.3mm, draw=L] (u1) -- (u6);
\path[line width=0.3mm, draw=L] (u3) -- (u6);
\path[line width=0.3mm, draw=L] (u2) -- (u7);
\path[line width=0.3mm, draw=L] (u4) -- (u7);
\path[line width=0.3mm, draw=L] (u6) -- (u7);
\path[line width=0.3mm, draw=L] (u6) -- (u8);
\path[line width=0.3mm, draw=L] (u7) -- (u9);

\path[line width=0.3mm, draw=L] (u1) -- (u10);
\path[line width=0.3mm, draw=L] (u2) -- (u10);
\path[line width=0.3mm, draw=L] (u3) -- (u10);
\path[line width=0.3mm, draw=L] (u4) -- (u5);
\path[line width=0.3mm, draw=L] (u5) -- (u10);

\path[line width=0.3mm, draw=L] (u5) -- (u8);
\path[line width=0.3mm, draw=L] (u5) -- (u9);

\path[line width=0.3mm, draw=L] (u1) -- (u11);
\path[line width=0.3mm, draw=L] (u2) -- (u11);
\path[line width=0.3mm, draw=L] (u8) -- (u11);
\path[line width=0.3mm, draw=L] (u9) -- (u11);

\path[line width=0.3mm, draw=L] (u4) -- (u1);
\path[line width=0.3mm, draw=L] (u3) -- (u9);
\path[line width=0.3mm, draw=L] (u3) -- (u4);

\draw(30,-40) node[anchor=base west]{\fontsize{12.23}{17.07}\selectfont $F_3$};

\end{tikzpicture}
\end{minipage}

\caption{The graphs $F_2$ and $F_3$.}{\label{fig-1.3A}}

\end{figure}

\noindent{\textbf{Case 1.} $V_1(\{u_1\})=\varnothing$.}

It is straightforward to verify that $G$ is a blow-up of $F_2$ (see $F_2$ in Figure \ref{fig-1.3A}).
Let $F_2(s,t)$ be the graph obtained from $F_2$ by replacing $x$ and $y$ with independent sets of sizes $s$ and $t$, respectively,
such that two vertices of $F_2(s,t)$ are adjacent if and only if their corresponding original vertices in $F_2$ are adjacent.
From the definition of $F_2(s,t)$,
we know that $G\cong F_2(\frac{n-9-k}{2},\frac{n-9+k}{2})$ for some $k\leq \frac{n-9}{2}$.
For short, set $G'=G-\{u_1,u_2,u_3\}$.
It is not hard to verify that $G'$ can be obtained from $K_{\frac{n-3-k}{2},\frac{n-3+k}{2}}$
by deleting 7 edges.
Setting $\alpha_1=0$ and $\alpha_2=7$ in Lemma \ref{Lem2.3}, we obtain
$| \rho(G)-\rho(K_{\frac{n-3-k}{2},\frac{n-3+k}{2}})+\frac{14}{n}| \leq\frac{0.1}{n},$
which yields that
\begin{align*}
\rho(G)\leq \rho(K_{\frac{n-3-k}{2},\frac{n-3+k}{2}})-\frac{13.9}{n}\leq \frac{n-3}{2}-\frac{13.9}{n}.
\end{align*}
Combining this with Claim \ref{claim3.9D} yields $\rho(G)\leq \frac{n-3}{2}-\frac{13.8}{n}$,
which contradicts Claim \ref{Claim3.1}.

\noindent{\textbf{Case 2.} $V_1(\{u_2\})=\varnothing$.}

It is straightforward to verify that $G'$ is a blow-up of $F_3$.
From Figure \ref{fig-1.3A}, we see that $\chi(F_3)\leq 3$.
Hence, $\chi(G)=\chi(F_3)\leq 3$, a contradiction.

\noindent{\textbf{Case 3.} $V_2(\{u_2\})=\varnothing$.}

One can easily verify that $G'$ is a blow-up of $F_1$.
Specifically, $G\cong F_1(\frac{n-9-k}{2},\frac{n-9+k}{2})$ for some $k\leq \frac{n-9}{2}$.
For short, set $G''=G-\{u_1,u_2,u_3\}$.
It is evident that $G''$ is a subgraph of $K_{\frac{n-1-k}{2},\frac{n-5+k}{2}}$,
which implies $$\rho^2(G'')\leq \rho^2(K_{\frac{n-1-k}{2},\frac{n-5+k}{2}})=\frac{(n-3)^2-(k-2)^2}{4}.$$
As seen in Figure \ref{fig-1.1A}, $d_G(u_i)=4$ for each $i\in \{1,2,3\}$.
By repeatedly applying Lemma \ref{Lem2.4},
we obtain that
\begin{align*}
\rho^2(G)&\leq \rho^2(G-\{u_1\})+2d_{G}(u_1)
     \leq   \rho^2(G')+2d_{G}(u_1)+2d_{G}(u_2)+2d_{G}(u_3)\\
     &\leq \rho^2(G'')+24\leq \frac{(n-3)^2}{4}-\frac{(k-2)^2}{4}+24.
\end{align*}
On the other hand, by Claim \ref{Claim3.1}, $\rho^2(G)\geq \big(\frac{n-3}{2}-\frac{13.2}{n}\big)^2\geq \frac{(n-3)^2}{4}-14$.
Combining the above two inequalities yields that $|k|\leq 20$,
as required.
\end{proof}

Let $s=k-2$. Since $|k|\leq 20$, we get $|s|\leq 22$ and
$G\cong F_1(\frac{n-9-k}{2},\frac{n-9+k}{2})=F_1(\frac{n-11-s}{2},\frac{n-7+s}{2})$.
Then by Lemma \ref{lem4.2} and the extremality of $G$, it follows that $G=F_1(\lfloor\frac{n-11}{2}\rfloor,\lceil\frac{n-7}{2}\rceil)$.
This completes the proof of Theorem \ref{thm1.1}.
\end{proof}

\end{document}